\theoremstyle{definition}
\newcommand{\nbb}{\mathbb{N}}
\newcommand{\rbb}{\mathbb{R}}
\renewcommand{\L}{\mathcal{L}}
\newcommand{\W}{\mathcal{W}}
\renewcommand{\H}{\mathcal{H}}
\newcommand{\PMarkov}{\mathcal{P}}
\newcommand{\B}{\mathcal{B}}
\newcommand{\la}{\langle}
\newcommand{\ra}{\rangle}
\newcommand{\Xtil}{\widetilde{X}}
\newcommand{\Xbar}{\overline{X}}
\newcommand{\xbar}{\overline{x}}
\newcommand{\vbar}{\overline{v}}
\newcommand{\zbar}{\overline{z}}
\newcommand{\Xbf}{\mathbf{X}}
\newcommand{\Ubf}{\mathbf{U}}
\newcommand{\Vbf}{\mathbf{V}}
\newcommand{\mi}{\wedge}
\newcommand{\D}{\mathcal{D}}
\newcommand{\Tr}{\text{Tr}}
\newcommand{\dW}{\text{d}W}
\newcommand{\f}{\varphi}
\newcommand{\E}[1]{\mathbb{E}\left[#1\right]}
\newcommand{\Enone}[1]{\mathbb{E}#1}
\renewcommand{\P}[1]{\mathbb{P}\left\{#1\right\}}
\newcommand{\Pnone}{\mathbb{P}}
\newcommand{\wt}[1]{ \widetilde{#1} }
\newcommand{\Hs}{\mathcal{H}_{-s}}
\theoremstyle{definition}
\theoremstyle{plain}
\newtheorem{theorem}{Theorem}
\newtheorem{corollary}[theorem]{Corollary}
\newtheorem{lemma}[theorem]{Lemma}
\newtheorem{assumption}[theorem]{Assumption}
\newtheorem{proposition}[theorem]{Proposition}
\newtheorem{remark}[theorem]{Remark}
\numberwithin{equation}{section}
\title[Generalized Langevin Equation with Power-Law Memory]{The Generalized Langevin Equation with power-law memory in a nonlinear potential well}
\author[N.E.~Glatt-Holtz]{Nathan E.~Glatt-Holtz$^1$}
\author[D.P.~Herzog]{David P.~Herzog$^2$}
\author[S.A.~McKinley]{Scott A.~McKinley$^1$}
\author[H.D.~Nguyen]{Hung D.~Nguyen$^3$}
\thanks{\noindent \hspace{-0.52cm} $^1$ Department of Mathematics, Tulane University. 6823 St Charles Ave, New Orleans, LA, 70118.\\
$^2$ Department of Mathematics, Iowa State University. 411 Morrill Rd, Ames, Iowa 50011.\\
$^3$ Department of Mathematics, University of California, Los Angeles. 520 Portola Plaza, Los Angeles, CA 90095}
\begin{document}

\maketitle


\begin{abstract}
The generalized Langevin equation (GLE) is a stochastic integro-differential equation that has been used to describe the velocity of microparticles in viscoelastic fluids. In this work, we consider the large-time asymptotic properties of a Markovian approximation to the GLE in the presence of a wide class of external potential wells. The qualitative behavior of the GLE is largely determined by its memory kernel $K$, which summarizes the delayed response of the fluid medium on the particles past movement. When $K$ can be expressed as a finite sum of exponentials, it has been shown that long-term time-averaged properties of the position and velocity do not depend on $K$ at all. In certain applications, however, it is important to consider the GLE with a power law memory kernel. Using the fact that infinite sums of exponentials can have power law tails, we study the infinite-dimensional version of the Markovian GLE in a potential well. In the case where the memory kernel $K$ is integrable (i.e. in the asymptotically diffusive regime), we are able to extend previous results and show that there is a unique stationary distribution for the GLE system and that the long-term statistics of the position and velocity do not depend on $K$.  However, when $K$ is not integrable (i.e. in the asymptotically subdiffusive regime), we are able to show the existence of an invariant probability measure but uniqueness remains an open question. In particular, the method of asymptotic coupling used in the integrable case to show uniqueness does not apply when $K$ fails to be integrable. 
\end{abstract}

\vspace{0.5cm}

\noindent{\it Keywords\/}: anomalous diffusion, asymptotic coupling, invariant measure.

\section{Introduction}

The movement of microparticles in biological fluids is often distinct from classical Brownian motion \cite{hofling2013anomalous}. While some particles exhibit non-Gaussian~\cite{wang2012brownian, lampo2017cytoplasmic} and/or switching behavior \cite{metzler2014anomalous, newby2017blueprint}, an important category of \emph{anomalous diffusion} includes paths with stationary Gaussian increments that are negatively correlated with each other \cite{kou2008stochastic, ernst2012fractional, weber2012analytical, hill2014biophysical}.  In particular, anticorrelation can accumulate in such a way that the particle process $\{x(t)\}_{t \geq 0}$ has a mean squared displacement (MSD, $\E{|x|^2(t)}$) that is sublinear over a significant period of time \cite{morgado2002relation, kupferman2004fractional,mckinley2017anomalous}. Such a process is commonly called \emph{subdiffusive}, and there have been numerous perspectives on how to model \cite{mason1995optical, kou2004general,kupferman2004fractional, mckinley2009transient, indei2012treating, goychuk2012viscoelastic,hohenegger2017fluid} and statistically analyze \cite{kou2008stochastic,fricks2009time, cordoba2012elimination, meroz2013test, lysy2016model,hohenegger2018reconstructing} individual, unconstrained, subdiffusive microparticle paths. 

It is also natural to investigate the behavior of subdiffusive particles when they are subjected to external forces. Commonly the magnitude and direction of these forces are expressed as the negative gradient of a space-dependent potential energy function $\Phi : \mathbb{R}^d \to \mathbb{R}$. In early works, it was common to study quadratic potentials ($\Phi(x) = \kappa |x|^2$)  because the equations that govern such particle motion are linear and admit exact solutions \cite{adelman1976fokker,kupferman2004fractional,kou2004general}. More recently there has been success in modeling and simulating the behavior of subdiffusive particles in nonlinear potentials as well. As we discuss below, it is possible to develop a model that exhibits ``transient subdiffusive behavior'' (sublinear MSD over several log-decades of time \cite{mckinley2009transient,mckinley2017anomalous}) in the absence of a confining potential, and permits an exact expression for the stationary joint distribution of position and velocity when subjected to one of an appropriate class of nonlinear potentials \cite{ottobre2011asymptotic,goychuk2012viscoelastic,pavliotis2014stochastic}. There are a host of classical questions that can be rigorously studied for Brownian motion in such potentials, which provides a foundation to ask similar questions for subdiffusive particles. Several such questions were introduced and surveyed by I.~Goychuk in 2009 \cite{goychuk2009viscoelastic} and 2012 \cite{goychuk2012viscoelastic}: for example, addressing escape times between minima in double-welled potentials (or more recently, escape times in stationary Gaussian potentials \cite{goychuk2018viscoelastic}), effective diffusivity in static periodic potentials, effective velocity induced by time-dependent potentials known as flashing or rocking rachets (see also \cite{goychuk2010subdiffusive,goychuk2012fractional,kharchenko2012flashing,goychuk2013rocking,kharchenko2013subdiffusive}). Since organelles and other microparticles (or macromolecules) have been observed to exhibit significant subdiffusive behavior in cytoplasm \cite{tolic2004anomalous, weiss2004anomalous,weber2010bacterial} and other biological fluids like mucus \cite{hill2014biophysical}, applications of these theoretical models have emerged in recent years: for example in the study of magnetic nanoparticles \cite{goychuk2015modeling,goychuk2018sensing} and the modeling of intracellular transport by microtubule-associated molecular motors \cite{goychuk2014molecular,goychuk2014moleculartoo,goychuk2015anomalous}. Moreover, it is commonplace for experimentalists to probe fluid-mechanical properties of live cells by manipulation through ``optical tweezers'' \cite{neuman2008single,wei2008comparative,zhang2008optical}. However, such studies rarely take into account the subdiffusive character of the microparticle probes, or the possibly nonlinear forces exerted by the trap.

In this work, we take a step advancing the theory for a set of models that are ``fully subdiffusive'' in a sense described below.  Such models are necessarily infinite-dimensional, which makes it non-trivial to establish existence and uniqueness of stationary measures, and points towards fundamental questions about if and on what time scale a system might ``forget'' its initial conditions.


\subsection{Model development and the main question}

To describe microparticle motion in \emph{viscous} fluids, it is common to use a Langevin framework. Let $\{(x(t),v(t))\}_{t \geq 0}$ denote the position and velocity of a particle, and let $\Phi(x)$ denote the particle's potential energy at the position $x$. (Because it does not have a substantive impact on our results, we will study the dynamics in one dimension.) Newton's Second Law yields \cite{pavliotis2014stochastic}
\begin{equation} \label{eq:langevin}
	m \, d v(t) = -\gamma v(t) - \Phi'(x(t)) dt + \sqrt{2\gamma} d W(t),
\end{equation}
where $x'(t) = v(t)$. Here $m$ is the particle's mass, $\gamma$ is the viscous drag coefficient, and $W(t)$ is a standard Brownian motion. To be physically correct, the coefficient of the noise should be $\sqrt{2 k_B T \gamma}$, where $k_B$ is Boltzman's constant and $T$ is the temperature, but for the sake of notational simplicity we will set $k_B T = 1$ throughout.  Under appropriate conditions on the potential well, this system has a unique stationary distribution with density 
\begin{equation} \label{eqn:marginal-x-v}
	\pi(x,v) \propto \exp\Big(\!\!- \big(\Phi(x) + \frac{m}{2} v^2\big)\Big)
\end{equation}
and is geometrically ergodic, in the sense that the law of the process converges to the stationary distribution exponentially quickly (see, for example, \cite{mattingly2002ergodicity, cooke2017geometric, herzog2017ergodicity, pavliotis2014stochastic} as well as \cite{conrad2010construction, grothaus2015hypocoercivity} for related results). Birkoff's ergodic theorem in turn implies
\begin{equation} \label{eqn:ergodic-theorem}
	\lim_{t \to \infty}\frac{1}{t} \int_0^t f(x(s), v(s)) ds = \int_{\rbb^2} f(x,v) \pi(x,v) dx dv, \quad \pi\text{-a.s. and in }L^1(\pi)
\end{equation}
for any $f \in L^1(\pi)$.  Ultimately, we are interested in whether or not such a property holds for viscoelastic diffusion in a nonlinear potential well. 

In order to model the memory effects arising in viscoelastic diffusion, physicists have long employed the generalized Langevin equation (GLE), which adds a memory kernel $K: \rbb \to \rbb_+$ and a stationary Gaussian process $\{F(t)\}_{t \in \rbb}$ to the Langevin dynamics \eqref{eq:langevin}. In particular, the GLE in a potential well $\Phi$ can be written formally as
\begin{equation}\label{eqn:GLE}
m\, \dot{v}(t)=- \Phi'(x(t)) -\gamma v(t) - \int_{-\infty}^t K(t-s)v(s)\, ds + \sqrt{2\gamma} \, \dot{W}(t) + F(t).
\end{equation}
for $t \geq 0$, where we assume that $\E{F(t) F(s)} = K(t-s)$ in order to satisfy the fluctuation-dissipation relation~\cite{kubo1966fluctuation,hohenegger2017equipartition,hohenegger2018reconstructing}. In general, there can be a pair of coefficients in front of the memory terms, but they do not affect our analysis. We refer the reader to \cite{hohenegger2017fluid} for a physical interpretation of those parameters. In comparison with the classical GLE version in \cite{kubo1966fluctuation}, the noise term in~\eqref{eqn:GLE} can be regarded as a sum of two independent noises $\xi(t)=F(t)+\sqrt{2\gamma}\dot{W}(t),$ such that $\mathbb{E}[F(t)\dot{W}(t)]=0$, and $\mathbb{E}[\sqrt{2\gamma}\dot{W}(t) \sqrt{2\gamma}\dot{W}(t')]=2\gamma\delta_0(|t-t'|)$. In this case, the memory term $\gamma(t)$ is a sum of the Dirac delta function and $K(t)$, i.e. $\gamma(t)=\gamma\delta(t)+K(t)$ and thus
$$\int_{-\infty}^t \gamma(t-s)v(s)\, ds=\gamma v(t)+\int_{-\infty}^t K(t-s)v(s)\, ds.$$
Separation of $F(t)$ and $\dot{W}(t)$ will help with the analysis later. We also note a distinction in the support of the memory kernel. Namely, in the physics literature, the memory kernel in the GLE is often defined on the interval $[0,t]$ rather than $(-\infty,t]$ as we have here. In order for a stationary integro-differential equation to be properly defined we must specify initial data on an infinite horizon. The classical definition effectively defines the velocity to be zero on the interval $(-\infty, 0]$. This assumption (along with a quadratic potential well), allow for use of the Laplace transform, which can yield a host of statistical properties including an exact solution \cite{vinales2006anomalous,desposito2008memory}. Unfortunately, such an approach is not available here.

Due to the memory kernel $K$, in this formulation of GLE, the joint process $(x(t),v(t))$ is non-Markovian.  Therefore, it is not immediately clear what we mean by a ``stationary distribution.'' However, if the memory kernel $K(t)$ is a sum of $N$ exponential functions, we can use the so-called Mori-Zwanzig formalism \cite{zwanzig2001nonequilibrium, goychuk2009viscoelastic, ottobre2011asymptotic} to rewrite the GLE as a $2+N$ (or $2 + 2N$ \cite{fricks2009time}) dimensional system of SDEs. (For a contrast on the two representations, see \cite{hohenegger2018reconstructing}.) When $\Phi$ is quadratic, such a representation is statistically equivalent to \eqref{eqn:GLE}; otherwise, we simply call this the Markovian approximation of the GLE. This  finite-dimensional Markovian version of the GLE does admit a stationary distribution and one can show that the marginal distribution of the pair $(x,v)$ in stationarity is exactly \eqref{eqn:marginal-x-v} \cite{ottobre2011asymptotic,goychuk2012viscoelastic,pavliotis2014stochastic}.  The fact that the memory kernel does not affect the stationary statistics of $x$ and $v$ is, in some sense, a natural generalization of the observation that the drag coefficient $\gamma$ does not appear in $\pi(x,v)$ for viscous diffusion. It is then reasonable to ask whether this property holds for more general forms of $K$.

The sum-of-exponential form for $K$ is a very useful construct, but it turns out that restricting ourselves to \emph{finitely} many terms neglects an important qualitative regime. Indeed, if $K \in L^1(\rbb)$, then the associated unconstrained GLE ($\Phi \equiv 0$) is always \emph{asymptotically diffusive} in the sense that $\E{x^2(t)} \sim t$ as $t \to \infty$ \cite{ottobre2011asymptotic, mckinley2017anomalous}.  Here we write $f(t) \sim g(t)$ as $t \to \infty$ if $\lim_{t \to \infty} f(t)/g(t) = C \in (0,\infty)$.  However, if $K \notin L^1(\rbb)$ but $K(t) \sim t^{-\alpha}$ for some $\alpha \in (0,1)$ as $t \to \infty$, then under mild restrictions, the unconstrained GLE is \emph{asymptotically subdiffusive}, i.e.~$\E{x^2(t)} \sim t^\alpha$ \cite{mckinley2017anomalous}. Moreover, a new \emph{critical} regime is recently found when $K(t)\sim t^{-1}$, for which case, $\mathbb{E}[x^2(t)]\sim t/\log t$ \cite{didier2019asymptotic}. See also \cite{kou2008stochastic,pottier2003aging,sandev2019fractional} for related results. We are primarily interested in memory kernels with power law tails, which can fall in either qualitative regime. As has been observed elsewhere~\cite{abate1999infinite,goychuk2009viscoelastic}, it is possible for an \emph{infinite} sum of exponentials to have a power law tail. Therefore, an infinite-dimensional version of the Mori-Zwanzig formalism is an appropriate way to study the GLE with power law memory.

In this work, we explore the infinite-dimensional Markovian version of the GLE with an eye toward addressing the fundamental question of whether $(x(t),v(t))$ is ergodic in the sense of \eqref{eqn:ergodic-theorem}. In Section \ref{sec:results} we introduce notation, explicitly define our model, and summarize the main results. In Section \ref{sec:well-posed} we establish well-posedness for all values of $\alpha > 0$ (including both the asymptotically diffusive and subdiffusive cases). Using an extension of the invariant measure previously established for the finite-dimensional GLE, in Section \ref{sec:invariant-exist}, we demonstrate the existence of an explicitly defined invariant probability measure in the infinite-dimensional case, cf. Theorem~\ref{thm:density}. In Section \ref{sec:invariant-unique}, we use \emph{asymptotic coupling} \cite{weinan2001gibbsian,mattingly2002exponential, hairer2011asymptotic} to establish uniqueness of this measure in the asymptotically diffusive case ($\alpha>1$) under a wide class of nonlinear potentials including those polynomials of even degree, see Assumption \ref{cond:Phi-1} and Theorem~\ref{thm:inv-measure:unique}. We finish with Section \ref{sec:discussion} which contains conclusions and discussions of open and related problems. In particular, we cannot extend our proof of unique ergodicity to the asymptotically nondiffusive case ($\alpha\in(0,1]$). We have yet to determine whether this is a shortcoming of current methods, or if the claim is simply not true. We hope that this work will shine some light on this interesting question.

\section{Notation and Rigorous Summary of Results}

\label{sec:results}

Suppose that a memory kernel $K(t)$ has the form
\begin{equation}\label{eqn:K-finite-mode}
K(t)=\sum_{k=1}^N c_k e^{-\lambda_k t},
\end{equation}
where $c_k,\ \lambda_k>0$, $k=1,\dots,N$. Then, following Chapter 8 of~\cite{pavliotis2014stochastic}, which summarizes pre-existing work on the same topic \cite{ottobre2011asymptotic,goychuk2012viscoelastic}, we can use Duhamel's formula and set
\begin{align*}
z_k(t)= e^{-\lambda_k t} z_k(0) + \sqrt{c_k} \int_0^t e^{-\lambda_k(t-s)} v(s) \, ds + \sqrt{2\lambda_k} \int_0^t e^{-\lambda_k (t-s) } \, dW_k(s).
\end{align*}
In order to approximate equation~\eqref{eqn:GLE} as an $(N+2)$-dimensional Markov system, we write
\begin{equation} \label{eqn:finite-mode}
\begin{aligned}
d x(t) &= v(t) \, d t,\\
m\, d v(t)&=\Big(-\gamma v(t)-\Phi'(x(t))-\sum_{k=1}^N \sqrt{c_i} z_i(t)\Big)dt+\sqrt{2\gamma}\, dW_0(t),\\
d z_k(t)&=(-\lambda_k z_k(t)+ \sqrt{c_k}v(t))\, dt+\sqrt{2\lambda_k}\, dW_k(t),\qquad 1\leq k\leq N,\\
\end{aligned}
\end{equation}
where $z_k(0)$ are i.i.d. $\mathcal{N}(0,1)$ random variables and $(W_0,W_1,\dots,W_N)$ is a standard $(N+1)$-dimensional Brownian motion. This is the content of Proposition 8.1 of~\cite{pavliotis2014stochastic}.  It is also known (see~Proposition 8.2, \cite{pavliotis2014stochastic}) that the system \eqref{eqn:finite-mode} is uniquely ergodic with an invariant probability density function $\varrho(x,v,z_1,\dots,z_N)$ given by
\begin{equation} \label{eqn:density-finite-mode}
\varrho(x,v,z_1,\dots,z_N) \propto\exp\Big\{-\Phi(x)-\frac{m}{2}v^2-\frac{1}{2}\sum_{k=1}^N  z_k^2\Big\}.  
\end{equation}

As discussed above, in order to study power law memory kernels we consider infinite sums of exponential functions. To this end, let $\alpha, \beta >0$ be given, and define constants $c_k, \lambda_k$, $k=1,2, \ldots$, by 
\begin{align} \label{c-k}
c_k=\frac{1}{k^{1+\alpha\beta}},\ \lambda_k=\frac{1}{k^\beta}.
\end{align}   
Define the kernel $K$ by
\begin{equation}
\label{eqn:K}
K(t)=\sum_{k\geq 1} c_k e^{-\lambda_k t}.
\end{equation}
It follows (see Example 3.3 of~\cite{abate1999infinite}) with this choice of constants $c_k$ and $\lambda_k$ that
\begin{equation} \label{lim:K}
K(t) \sim t^{-\alpha} \,\,\, \text{ as } \,\,\, t\rightarrow \infty, 
\end{equation}
thus giving the desired power law tail for the memory kernel $K$. This definition of the constants and exponents is not unique in yielding the power-law memory and subdiffusive behavior. For an alternate (and more efficient in the sense that few terms can yield a power law over similar time scales), see \cite{goychuk2009viscoelastic} and \cite{goychuk2012viscoelastic} for some discussion.

With this definition for $K$, we consider the following infinite-dimensional system of stochastic differential equations 
\begin{equation}\label{eqn:GLE-Markov}
\begin{aligned}
d x(t) &= v(t)\, d t, \\
m\, d v(t)&=\big(-\gamma v(t)-\Phi'(x(t))-\sum_{k\geq 1} \sqrt{c_k} z_k(t)\big)\,dt+\sqrt{2\gamma}\, dW_0(t) ,\\
d z_k(t)&=\left(-\lambda_k z_k(t)+ \sqrt{c_k}v(t)\right) \, dt+\sqrt{2\lambda_k}\, dW_k(t),\qquad k\geq 1, 
\end{aligned}	
\end{equation}
where the $W_k$ are independent, standard Brownian motions. Throughout this work, we will assume that the potential $\Phi$ satisfies the following growth and regularity conditions:
\begin{assumption}\label{cond:Phi} $\Phi\in C^\infty(\rbb)$, $\int_\rbb|\Phi'| e^{-\Phi}dx<\infty$ and there exists a constant $b>0$ such that for all $x\in\rbb$
\begin{align*}
\quad b(\Phi(x)+1)\geq x^2.     
\end{align*}
\end{assumption}

A typical class of potentials $\Phi$ that satisfies Assumption~\ref{cond:Phi} is the class of polynomials of even degree whose leading coefficient is positive.
\begin{remark}
The first and third parts of Assumption~\ref{cond:Phi} are quite standard, giving nominal regularity as well as assuring the potential grows at least as fast as a quadratic at infinity.  The second condition is also a nominal growth condition on the derivative of $\Phi$ and will be used in Section~\ref{sec:invariant-exist} to check that our candidate invariant measure is indeed invariant.    
\end{remark}

In order to define a phase space for the infinite-dimensional process 
\begin{align*}
X(t)=(x(t), v(t), z_1(t), z_2(t), \ldots), 
\end{align*}
we will make use of the Hilbert space $\H_{-s}$, $s\in \rbb$, equipped with the inner product $\la\cdot,\cdot\ra_{-s}$, 
 \begin{equation}\label{eqn:H_p}
\H_{-s}=\Big\{X=(x,v,z_1, z_2, \ldots):x^2+v^2+\sum_{k\geq 1}k^{-2s}z_k^2<\infty\Big\},
\end{equation}
and 
\begin{equation} \label{eqn:H-inner-prod}
\la X,\widetilde{X}\ra_{-s} = x\wt{x}+v\wt{v}+\sum_{k\geq 1}k^{-2s}z_k\wt{z}_k.
\end{equation}
We denote by $\| \cdot \|_{-s}$ the norm in $\H_{-s}$ given by 
\begin{align}\label{eqn:H-norm}
\|X\|_{-s}^2=x^2+v^2+\sum_{k\geq 1}k^{-2s}z_k^2. 
\end{align}
The canonical basis $\mathcal{D}=\{e_x,e_v,e_1,e_2,\dots\}$ in $\H_{-s}$ is then given by
\begin{equation} \label{form:H-basis}
\begin{aligned}
e_x&=(1,0,0,0,\dots),\\
e_v &=(0,1,0,0,\dots),\\
e_k & = (0,0,0,\dots,k^s,0,\dots),\, k\geq 1.
\end{aligned}
\end{equation}
From now on, for simplicity, we omit the subscript $-s$ in the inner product $\la\cdot,\cdot\ra$. In view of~\eqref{form:H-basis}, for $X=(x,v,z_1,z_2,\dots)$, $X$ we may write 
\begin{equation}\label{eqn:density-5}
X=\la X,e_x\ra e_x+\la X,e_v\ra e_v+\sum_{k\geq 1}\la X,e_k\ra e_k=  xe_x+ve_v+\sum_{k\geq 1}k^{-s}z_ke_k.
\end{equation}

Next, we collect several formulas involving Fr\'echet derivatives that will be useful later, especially in Section~\ref{sec:invariant-exist}. For $\psi:\Hs\to\rbb$, let $D\psi:\Hs\to\Hs$ be the first Fr\'echet derivative, if it exists. Then, the derivative of $\psi$ in the direction of $e\in\D$ is given by
\begin{displaymath}
\la D\varphi(X),e\ra=\lim_{\varepsilon\to 0}\frac{\varphi(X+\varepsilon e)-\varphi(X)}{\varepsilon}=\frac{\partial\psi}{\partial\la X,e\ra}(X).
\end{displaymath}
In view of representation~\eqref{eqn:density-5}, substituting $e$ with $e_x$, $e_v$, $e_k$, $k\geq 1$ in the above formula gives
\begin{equation} \label{eqn:Frechet:first}
\la D\psi(X),e_x\ra =\frac{\partial\psi}{\partial x}(X ),\qquad \la D\psi(X),e_v\ra =\frac{\partial\psi}{\partial v}(X ),
\end{equation}
\begin{equation*}
\text{and    }
\la D\psi(X),e_k\ra = k^s\frac{\partial\psi\left(X \right)}{\partial z_k}
\end{equation*}
Similarly, if $\psi$ is twice Fr\'echet differentiable, let $ D^2\psi:\Hs\to L(\Hs,\Hs)$ be the second Fr\'echet derivative, where $L(\Hs,\Hs)$ denotes the space of linear bounded maps from $\Hs$ to itself. Then, for $e\in\D$, we have
\begin{displaymath}
\la  D^2\psi(X)(e),e\ra=\lim_{\varepsilon\to 0}\frac{1}{\varepsilon}\la  D\psi(X+\varepsilon e)- D\psi(X),e\ra=\frac{\partial^2\psi}{\partial\la X,e\ra^2}(X).
\end{displaymath}
Thus
\begin{equation}\label{eqn:Frechet:second}
\la D^2\psi(X)(e_x),e_x\ra=\frac{\partial^2\psi}{\partial x^2}\left(X \right), \qquad \la D^2\psi(X)(e_v),e_v\ra=\frac{\partial^2\psi}{\partial v^2}\left(X \right),
\end{equation}
\begin{equation*}
\text{and  } \la D^2\psi(X)(e_k),e_k\ra=k^{2s}\frac{\partial^2\psi}{\partial z_k^{2}}\left(X \right).
\end{equation*}

Throughout, unless otherwise stated, we make the following assumptions about kernel parameters $\alpha, \beta$ as in~\eqref{c-k} and \eqref{eqn:K} and the phase space regularity parameter $s$.  
\begin{assumption}\label{cond:sol'n} Let $\alpha,\beta>0$ be as in~\eqref{c-k} and $s$ as in~\eqref{eqn:H_p}. We assume that they satisfy either the \emph{asymptotically diffusive} condition  
\begin{enumerate}
\item[\emph{(D)}]\label{cond:diffusion} $\displaystyle \alpha>1,\, \beta>\frac{1}{\alpha-1}$ and $\displaystyle \frac{1}{2}< s < \frac{(\alpha-1)\beta}{2}$;
\end{enumerate}  or the \emph{critical} condition
\begin{enumerate}
\item[\emph{(C)}] $\displaystyle \alpha =1,\, \beta>1$ and $\displaystyle \frac{1}{2}< s< \frac{ \beta}{2};$
\end{enumerate}
or the \emph{asymptotically subdiffusive} condition 
\begin{enumerate}
\item[\emph{(SD)}]\label{cond:subdiffusion} $\displaystyle 0<\alpha <1,\, \beta>\frac{
1}{\alpha}$ and $\displaystyle \frac{1}{2}< s< \frac{\alpha \beta}{2}.$
\end{enumerate}
\end{assumption} 
\begin{remark} 
The assumption above really concerns the parameters $\alpha, \beta$ only.  Indeed, the particular choice of $s$ in either part is the natural phase space range for the process for those particular choices of $\alpha, \beta$.  It is also worth remarking that, so long as $\beta>0$ is large enough, the above simply splits the dynamics in the diffusive $(\alpha >1)$ and the other two $(0<\alpha<1,\,\text{and}\,\, \alpha=1)$ regimes.  
\end{remark}

\begin{remark} In our context, to relate the infinite-dimensional system~\eqref{eqn:GLE-Markov} to the original equation~\eqref{eqn:GLE}, the initial data $z_k(0)$ for~\eqref{eqn:GLE-Markov} is necessarily i.i.d. with $\mathcal{N}(0,1)$ distribution. To ensure consistency, we want $(z_1(0),z_2(0),\dots)$ to live in $\H_{-s}$ almost surely. This means that we must restrict to the phase space $\H_{-s}$ for $s>1/2$. It turns out that this constraint on $s$ is also required to compute the density of the invariant measure, see Section~\ref{sec:invariant-exist}.
\end{remark} 

Fixing a stochastic basis $\mathcal{S}=\left(\Omega,\mathcal{F},\Pnone,\{\mathcal{F}_t\}_{t\geq 0},W\right)$ satisfying the usual conditions, cf. \cite{karatzas2012brownian}, where $W$ is the cylidrical Wiener process defined later in~\eqref{eqn:Wiener}, we state the following result giving existence and uniqueness of solutions of~\eqref{eqn:GLE-Markov}.
\begin{proposition} \label{prop:sol'n} Suppose that $\Phi$ satisfies Assumption \ref{cond:Phi} and the constants $\alpha, \beta, s$ satisfy Assumption \ref{cond:sol'n}. Then for all initial conditions $X_0\in \H_{-s}$, there exists a unique pathwise solution $X(\cdot,X_0):\Omega\times[0,\infty)\to\H_{-s}$ of~\eqref{eqn:GLE-Markov} in the following sense: $X(\cdot,X_0)$ is $\mathcal{F}_t$-adapted, $X(\cdot,X_0)\in C([0,\infty);\H_{-s})$ almost surely and that if $\wt{X}(\cdot,X_0)$ is another solution then for every $T\geq 0$,
\begin{displaymath}
\P{\forall t\in[0,T], X(t,X_0)=\wt{X}(t,X_0)}=1.
\end{displaymath}
Moreover, for every $X_0 \in \H_{-s}$ and $T\geq 0$, there exists $C(T,X_0)>0$ such that
 \begin{equation} \label{ineq:strong-sol'n}
 \E{\sup_{0\leq t\leq T }\|X(t)\|_{-s}^2}\leq C(T,X_0),
 \end{equation}
and that for any bounded set $B\subset\H_{-s}$, we have 
\begin{equation}\label{ineq:strong-sol'n:sup}
\sup_{X_0\in B}C(T,X_0)<\infty.
\end{equation}
\end{proposition}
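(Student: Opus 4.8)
The plan is to treat \eqref{eqn:GLE-Markov} as a semilinear stochastic evolution equation on $\H_{-s}$ of the form $dX = (AX + F(X))\,dt + Q\,dW$, where $A$ is the (unbounded, but dissipative) linear operator encoding the $-\gamma v$, $-\lambda_k z_k$, and the coupling terms $v \leftrightarrow z_k$, $F(X) = (0, -\Phi'(x)/m, 0, 0, \ldots)$ collects the nonlinearity, and $Q\,dW$ is the additive noise $(0, \sqrt{2\gamma}\,dW_0/m, \sqrt{2\lambda_1}\,dW_1, \ldots)$. The first task is to verify that $A$ generates a $C_0$-semigroup on $\H_{-s}$ and that $Q$ is Hilbert--Schmidt into $\H_{-s}$: this is exactly where Assumption~\ref{cond:sol'n} enters, since $\sum_k k^{-2s}\lambda_k = \sum_k k^{-2s-\beta}<\infty$ forces $s>1/2$ (trivially true), and one needs the coupling $\sum_k \sqrt{c_k}\,z_k$ appearing in the $v$-equation to define a bounded functional on $\H_{-s}$, i.e.\ $\sum_k c_k\,k^{2s} = \sum_k k^{2s-1-\alpha\beta}<\infty$, which is precisely the upper bound $s < (\alpha-1)\beta/2$ (resp.\ $\beta/2$, $\alpha\beta/2$). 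I would establish these summability facts first as a short lemma. With $A$ dissipative (after possibly subtracting a multiple of the identity) and $F$ locally Lipschitz on $\H_{-s}$ — note $\Phi \in C^\infty$ and $\Phi'$ appears only in the finite-dimensional $(x,v)$ block, so local Lipschitz continuity is immediate on bounded sets — standard semigroup theory (as in Da Prato--Zabczyk) yields a unique local mild solution.

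The second task is to upgrade the local solution to a global one and to obtain the moment bound \eqref{ineq:strong-sol'n}. Here I would apply Itô's formula to the energy functional suggested by the invariant density, namely
\begin{equation*}
\mathcal{E}(X) = \Phi(x) + \frac{m}{2}v^2 + \frac{1}{2}\sum_{k\geq 1} z_k^2,
\end{equation*}
or rather a regularized/truncated version of it, since $\mathcal{E}$ is not finite on all of $\H_{-s}$. Computing $d\mathcal{E}(X(t))$, the coupling terms between $v$ and the $z_k$'s cancel exactly (this is the structural feature that makes the Pavliotis density work), leaving a strictly negative drift $-\gamma v^2 - \sum_k \lambda_k z_k^2$ plus the Itô correction $\gamma + \sum_k \lambda_k$ from the noise — but the latter diverges, so one cannot work with $\mathcal{E}$ directly. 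Instead I would use the weaker Lyapunov function adapted to the $\H_{-s}$ norm, e.g.
\begin{equation*}
V(X) = \Phi(x) + \frac{m}{2}v^2 + \frac{1}{2}\sum_{k\geq 1}k^{-2s} z_k^2,
\end{equation*}
for which the Itô correction is $\gamma + \sum_k k^{-2s}\lambda_k < \infty$; the price is that the $v$-$z_k$ coupling no longer cancels perfectly, but the mismatch is controlled by Cauchy--Schwarz and Young's inequality against the negative terms $-\sum_k \lambda_k k^{-2s} z_k^2$ and $-\gamma v^2$, using Assumption~\ref{cond:Phi} (namely $b(\Phi(x)+1)\ge x^2$) to absorb the $x$-growth. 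This yields $\mathbb{E} V(X(t)) \le V(X_0) + Ct$, hence non-explosion, and then a Gronwall/stopping-time argument combined with a Burkholder--Davis--Gundy estimate on the martingale part of $\|X(t)\|_{-s}^2$ gives the $\sup_{0\le t\le T}$ bound \eqref{ineq:strong-sol'n}; tracking constants shows the dependence on $X_0$ enters only through $\|X_0\|_{-s}^2$ (and $\Phi(x_0)$, itself bounded on $\|X_0\|_{-s}\le R$ since $\Phi$ is continuous), giving the uniformity \eqref{ineq:strong-sol'n:sup} over bounded $B$.

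The main obstacle I anticipate is the handling of the infinitely many coupling terms in the energy estimate: the ``clean'' energy $\mathcal{E}$ has a perfect cancellation but infinite Itô correction, while the phase-space energy $V$ has finite correction but imperfect cancellation, and one must show the leftover cross terms $\sum_k \sqrt{c_k}(1-k^{-2s})\,v\,z_k$ are genuinely dominated by the available dissipation. The relevant inequality is $\big|\sum_k \sqrt{c_k}\,v\,z_k\big| \le \tfrac{\epsilon}{2}\sum_k \lambda_k z_k^2 + \tfrac{1}{2\epsilon}v^2\sum_k c_k/\lambda_k$, and $\sum_k c_k/\lambda_k = \sum_k k^{\beta-1-\alpha\beta} = \sum_k k^{-(\alpha-1)\beta-1}$, which converges exactly when $(\alpha-1)\beta > 0$ — i.e.\ in case (D) — and is the borderline/divergent case under (C) and (SD), so in those regimes one must instead pair $\sqrt{c_k}\,v\,z_k$ against the $\lambda_k z_k^2$ term mode-by-mode with a $k$-dependent weight, using $c_k/\lambda_k^2 = k^{2\beta-1-\alpha\beta} = k^{(2-\alpha)\beta-1}$ summed against $k^{-2s}$, i.e.\ needing $2s > (2-\alpha)\beta - 1$; one should check this is compatible with the stated ranges (it is, since there $s$ can be taken close to $\alpha\beta/2$ and the constraint $\beta$ large makes it work), or otherwise localize in the number of modes and pass to the limit. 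A secondary, more technical point is justifying the application of Itô's formula in infinite dimensions to $V$, which is not $C^2$ in the Fréchet sense on all of $\H_{-s}$ because of the $\Phi(x)$ term's growth and the infinite sum; the standard remedy is to apply Itô to the Galerkin truncations $V_N(X) = \Phi(x) + \tfrac{m}{2}v^2 + \tfrac12\sum_{k=1}^N k^{-2s}z_k^2$, derive the estimate uniformly in $N$, and take $N\to\infty$ using monotone convergence and the already-established continuity of $X(\cdot)$ in $\H_{-s}$.
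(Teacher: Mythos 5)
Your overall architecture matches the paper's: truncate the nonlinearity, solve the cutoff equation by a fixed-point argument, and remove the cutoff via a Lyapunov estimate on essentially the same functional ($V$ is the paper's $\Psi$ from \eqref{eqn:Lyapunov} up to scaling), followed by BDG and Gronwall. One simplification you miss: because $\lambda_k=k^{-\beta}\to 0$ and $\sum_k c_k k^{2s}<\infty$, the linear part $L$ is a \emph{bounded} operator on $\H_{-s}$ (Proposition~\ref{prop:L:bounded}), so no $C_0$-semigroup machinery or mild-solution formulation is needed; the cutoff equation has globally Lipschitz drift and a Banach fixed point argument suffices. Also, your claim that $\sum_k c_kk^{2s}<\infty$ ``is precisely'' the upper bound on $s$ is only accurate in regime (SD); in (D) and (C) the assumed bound on $s$ is strictly stronger than what this summability requires.

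The genuine gap is in your treatment of the cross terms, which you correctly identify as the crux but then resolve incorrectly. You insist on absorbing $\frac{1}{m}\sum_k\sqrt{c_k}\,v\,z_k$ into the dissipation $-\gamma v^2-\sum_k\lambda_k k^{-2s}z_k^2$. Pairing mode-by-mode against $\lambda_k k^{-2s}z_k^2$ forces convergence of
\begin{equation*}
\sum_{k\geq 1}\frac{c_k k^{2s}}{\lambda_k}=\sum_{k\geq 1}k^{2s-1-(\alpha-1)\beta},
\end{equation*}
i.e.\ $2s<(\alpha-1)\beta$ --- which is exactly condition (D) and is \emph{impossible} when $\alpha\leq 1$, since it must coexist with $2s<\alpha\beta$ and would require $\alpha>1$. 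Your assertion that the stated ranges in (C) and (SD) accommodate this ``since $\beta$ large makes it work'' is false: for $\alpha<1$, increasing $\beta$ makes the mismatch between $(2-\alpha)\beta$ and $\alpha\beta$ worse, not better. The resolution is that no absorption into the dissipation is needed for well-posedness. A plain Cauchy--Schwarz/Young bound,
\begin{equation*}
\Big|\sum_{k\geq 1}\sqrt{c_k}\,z_k v\Big|\leq \frac{1}{2}v^2+\frac{1}{2}\Big(\sum_{k\geq 1}c_kk^{2s}\Big)\Big(\sum_{k\geq 1}k^{-2s}z_k^2\Big),
\end{equation*}
requires only $\sum_k c_kk^{2s}<\infty$ (valid in all three regimes) and yields $\L\Psi\leq a_1\Psi+a_2$ as in \eqref{ineq:Ito-Lyapunov}; Gronwall then gives non-explosion and \eqref{ineq:strong-sol'n}--\eqref{ineq:strong-sol'n:sup}, exponential growth of the bound in $T$ being harmless. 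The dissipative bound you were after is genuinely a (D)-only phenomenon --- it is precisely Proposition~\ref{prop:Ito-Lyapunov:diffusive}, used for uniqueness, not for existence --- so your fallback of ``localizing in the number of modes and passing to the limit'' cannot rescue the absorption strategy in (C) or (SD); you must abandon it in favor of the linear-growth bound.
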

The proof of Proposition~\ref{prop:sol'n} will be carried out in Section~\ref{sec:sol'n}.

Our candidate stationary measure for the system~\eqref{eqn:GLE-Markov} is an infinite-dimensional analogue of the one defined in~\eqref{eqn:density-finite-mode}.  To write it down, let $\mu_x, \mu_v$ and $\nu$ denote the probability measures on $\rbb$ defined by 
\begin{equation} \label{defn:mu-x}
\mu_x(dy)=\frac{1}{\int_\rbb e^{-\Phi(z)} \, dz} e^{-\Phi(y)} \, dy, \,\,\, \mu_v(dy) = \frac{\sqrt{m}}{\sqrt{2\pi}}e^{-\frac{m y^2}{2}} \, dy, 
\end{equation}
\begin{displaymath}
\text{ and }\nu(dy) = \frac{1}{\sqrt{2\pi}} e^{-\frac{y^2}{2}}\, dy. 
\end{displaymath}
Note that $\mu_x$ is indeed a probability measure on $\rbb$ by Assumption~\ref{cond:Phi}.  
We denote by $\mu$ the product probability measure on $\rbb^\infty$ given by 
\begin{equation}\label{eqn:mu}
\mu=\mu_x\times\mu_v\times\prod_{k\geq 1}\nu.
\end{equation}
Observe that since $s>1/2$ by way of Assumption~\ref{cond:sol'n}
\begin{equation} \label{ineq:mu:second-moment}
\begin{aligned}
\int_{\rbb^\infty}\!\!\! \|X\|^2_{-s}\mu(dX) &=   \int_{\rbb^\infty}\!\!\! x^2+v^2+\sum_{k\geq 1}k^{-2s}z_k^2\ \mu(dX)< \infty.
\end{aligned}
\end{equation}
Thus the restriction of $\mu$ to $\H_{-s}$ is a probability measure as the above calculation shows that $\|X\|_{-s} < \infty$ $\mu$-almost surely. 

Let $X(t)=(x(t), v(t), z_1(t), z_2(t), \ldots)$ be the solution of \eqref{eqn:GLE-Markov} and define the operator $\PMarkov(t):\B_b\left(\H_{-s}\right)\to\B_b\left(\H_{-s}\right)$ by 
\begin{equation}\label{defn:Markov-semigroup}
\PMarkov(t)\varphi(X_0)=\E{\varphi(X(t,X_0))}.
\end{equation}
Here $\B_b(\H_{-s})$ is the space of bounded Borel measurable $\varphi: \H_{-s} \rightarrow \rbb$ and $C_b(\H_{-s})$ is the space of bounded continuous $\varphi: \H_{-s}\rightarrow \rbb$. It will be shown later in Proposition~\ref{prop:sol'n:feller} that $\left\{\PMarkov(t)\right\}_{t \geq 0}$ is a Feller Markov semigroup on $C_b(\H_{-s})$. We recall that a finite measure $\xi$ on $\H_{-s}$ is invariant for $\left\{\PMarkov(t)\right\}_{t \geq 0}$ if for every $\varphi\in C_b(\H_{-s})$ and $t>0$
\begin{displaymath}
\int_{\mathcal{H}_{-s}}\PMarkov(t)\varphi(X)\xi(dX)=\int_{\mathcal{H}_{-s}}\varphi(X)\xi(dX).
\end{displaymath}
We have the following result.   
\begin{theorem} \label{thm:density}
Suppose that Assumption~\ref{cond:Phi} and Assumption~\ref{cond:sol'n} are satisfied. Then $\mu$ defined by~\eqref{eqn:mu} is an invariant probability measure for the Markov semigroup $\left\{\PMarkov(t)\right\}_{t \geq 0}$ on $C_b(\H_{-s})$ defined by~\eqref{eqn:GLE-Markov}.  
\end{theorem}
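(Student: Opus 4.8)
The plan is to verify the invariance identity $\int_{\H_{-s}}\PMarkov(t)\varphi\,d\mu=\int_{\H_{-s}}\varphi\,d\mu$ for all $t>0$, first for $\varphi$ in the measure-determining class $\mathcal{C}$ of smooth cylinder functions $\varphi(X)=\phi(x,v,z_1,\dots,z_N)$ with $\phi\in C_c^\infty(\rbb^{N+2})$ and $N\geq1$, and then for all $\varphi\in C_b(\H_{-s})$. For $\varphi\in\mathcal{C}$, Itô's formula applied along the solution of~\eqref{eqn:GLE-Markov} (legitimate since $\phi$ depends on finitely many coordinates and every term below has at most linear growth in $\|X\|_{-s}$, so that the moment bounds~\eqref{ineq:strong-sol'n}--\eqref{ineq:strong-sol'n:sup} control all the integrals involved) yields $\PMarkov(t)\varphi(X_0)=\varphi(X_0)+\int_0^t\PMarkov(s)\L\varphi(X_0)\,ds$, where the generator acts by
\begin{align*}
\L\varphi(X)&=v\,\partial_x\varphi+\frac1m\Big(-\gamma v-\Phi'(x)-\sum_{k\geq1}\sqrt{c_k}\,z_k\Big)\partial_v\varphi+\frac{\gamma}{m^2}\,\partial_v^2\varphi\\
&\qquad{}+\sum_{k=1}^N\Big[(-\lambda_k z_k+\sqrt{c_k}\,v)\,\partial_{z_k}\varphi+\lambda_k\,\partial_{z_k}^2\varphi\Big]
\end{align*}
(in the Fr\'echet notation of~\eqref{eqn:Frechet:first}--\eqref{eqn:Frechet:second} this is the usual $\la D\varphi,\cdot\ra$-plus-trace expression). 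The term $-\frac1m\big(\sum_{k\geq1}\sqrt{c_k}z_k\big)\partial_v\varphi$ makes $\L\varphi$ \emph{not} a cylinder function, but it is still a well-defined smooth function on $\H_{-s}$ of at most linear growth, because $X\mapsto\sum_{k\geq1}\sqrt{c_k}z_k$ is a bounded linear functional on $\H_{-s}$ precisely when $\sum_k c_k k^{2s}=\sum_k k^{2s-1-\alpha\beta}<\infty$, i.e.\ when $s<\alpha\beta/2$ --- which holds in each regime of Assumption~\ref{cond:sol'n}, and is the same fact that makes the drift of~\eqref{eqn:GLE-Markov} a genuine map on $\H_{-s}$ in Section~\ref{sec:sol'n}. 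Integrating the Itô identity against $\mu$ and applying Fubini (justified by~\eqref{ineq:strong-sol'n}--\eqref{ineq:strong-sol'n:sup} and~\eqref{ineq:mu:second-moment}), the problem reduces to the infinitesimal invariance $\int_{\H_{-s}}\L\varphi\,d\mu=0$ for every $\varphi\in\mathcal{C}$.

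To prove the latter, split $\L=\mathcal{A}+\mathcal{S}$, where $\mathcal{S}\varphi=\frac{\gamma}{m^2}(\partial_v^2-mv\,\partial_v)\varphi+\sum_{k=1}^N\lambda_k(\partial_{z_k}^2-z_k\partial_{z_k})\varphi$ collects the Ornstein--Uhlenbeck pieces and $\mathcal{A}\varphi=v\,\partial_x\varphi-\frac1m\Phi'(x)\,\partial_v\varphi-\frac1m\big(\sum_{k\geq1}\sqrt{c_k}z_k\big)\partial_v\varphi+\sum_{k=1}^N\sqrt{c_k}\,v\,\partial_{z_k}\varphi$ collects the transport and coupling pieces. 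Since $\mu_v$ and each $\nu$ are the invariant Gaussians of the one-dimensional OU operators appearing in $\mathcal{S}$, a single integration by parts in $v$ and in each $z_k$ gives $\int(\partial_v^2-mv\partial_v)\varphi\,d\mu_v=0$ and $\int(\partial_{z_k}^2-z_k\partial_{z_k})\varphi\,d\nu=0$ (the boundary terms vanishing because $\phi$ has compact support), so $\int\mathcal{S}\varphi\,d\mu=0$ by Fubini. For $\mathcal{A}$, integrate by parts coordinate-wise against $\mu=\mu_x\times\mu_v\times\prod_{k\geq1}\nu$: integrating $v\,\partial_x\varphi$ by parts in $x$ produces $v\,\Phi'(x)\varphi$ --- here the hypothesis $\int_\rbb|\Phi'|e^{-\Phi}<\infty$ of Assumption~\ref{cond:Phi} is used to discard boundary terms and to secure integrability --- while integrating $-\frac1m\Phi'(x)\partial_v\varphi$ by parts in $v$ produces $-\Phi'(x)\,v\,\varphi$, so these cancel; likewise, for each $1\leq k\leq N$ the pair $-\frac1m\sqrt{c_k}z_k\partial_v\varphi$ and $\sqrt{c_k}v\,\partial_{z_k}\varphi$ both produce $\pm\sqrt{c_k}\,v\,z_k\varphi$ after one integration by parts and cancel; and for $k>N$ the remaining term $-\frac1m\sqrt{c_k}z_k\partial_v\varphi$ integrates to zero on its own, since $z_k$ is $\mu$-independent of the coordinates on which $\varphi$ depends, so $\int z_k\partial_v\varphi\,d\mu=\big(\int z_k\,d\nu\big)\big(\int\partial_v\varphi\,d\mu\big)=0$. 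Interchanging $\sum_{k\geq1}$ with $\int d\mu$ in these steps is justified because $\sum_k\sqrt{c_k}<\infty$ (equivalently $\alpha\beta>1$, again valid in every regime of Assumption~\ref{cond:sol'n}) and the integrals $\int|v\,z_k\,\varphi|\,d\mu$ and $\int|z_k\,\partial_v\varphi|\,d\mu$ are bounded uniformly in $k$. Hence $\int\L\varphi\,d\mu=0$.

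The remaining --- and genuinely delicate --- step is to upgrade $\int\L\varphi\,d\mu=0$ to the invariance identity: on $C_b(\H_{-s})$ the semigroup is not strongly continuous and naively iterating the Itô identity is circular, since one would need $\int\L\varphi\,d\mu_s=0$ at every time $s$ (with $\mu_s$ the time-$s$ law of the $\mu$-started process), which is the very conclusion sought. The cleanest way I would close this is by a finite-mode (Galerkin) truncation. For each $N$, consider the process on $\H_{-s}$ in which the block $(x,v,z_1,\dots,z_N)$ solves exactly~\eqref{eqn:finite-mode} with constants~\eqref{c-k} while the tail coordinates $z_k$, $k>N$, evolve as decoupled Ornstein--Uhlenbeck processes $dz_k=-\lambda_k z_k\,dt+\sqrt{2\lambda_k}\,dW_k$ (well-posed on $\H_{-s}$ a fortiori by the arguments of Section~\ref{sec:sol'n}). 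By Proposition~8.2 of~\cite{pavliotis2014stochastic} --- the finite-dimensional avatar of the computation just performed --- the block has invariant law with density~\eqref{eqn:density-finite-mode}, i.e.\ $\mu_x\times\mu_v\times\nu^{\otimes N}$, while each tail coordinate has invariant law $\nu$; as these are mutually independent, $\mu$ is invariant for the truncated semigroup $\PMarkov^{(N)}(t)$, so $\int\PMarkov^{(N)}(t)\varphi\,d\mu=\int\varphi\,d\mu$ for all $N$, $t>0$, $\varphi\in C_b(\H_{-s})$. Letting $N\to\infty$, the stability estimates underlying Proposition~\ref{prop:sol'n} --- the difference of the full and truncated solutions is driven only by the tail $\sum_{k>N}\sqrt{c_k}z_k$, which tends to $0$ in $\H_{-s}$, and a Gr\"onwall argument closes the loop --- give $X^{(N)}(t,X_0)\to X(t,X_0)$ in $\H_{-s}$ in probability for $\mu$-a.e.\ $X_0$; since $\varphi$ is bounded and continuous, dominated convergence (first in $\omega$, then in $X_0$) gives $\int\PMarkov^{(N)}(t)\varphi\,d\mu\to\int\PMarkov(t)\varphi\,d\mu$, while the left side equals $\int\varphi\,d\mu$ for every $N$. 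This yields $\int\PMarkov(t)\varphi\,d\mu=\int\varphi\,d\mu$ for all $\varphi\in C_b(\H_{-s})$, proving invariance of $\mu$. I expect this final upgrade to be the main obstacle --- either the Galerkin passage with the stated stability, or, as an alternative route, showing that $\mathcal{C}$ is a core for the generator (so that $\int\L\varphi\,d\mu=0$ propagates via $\L\int_0^t\PMarkov(s)\varphi\,ds=\PMarkov(t)\varphi-\varphi$), which is itself nontrivial because of the unbounded coupling coefficient $\sum_k\sqrt{c_k}z_k$; the coordinate-wise integration-by-parts bookkeeping in the presence of that coefficient, where the constraint $1/2<s<\alpha\beta/2$ of Assumption~\ref{cond:sol'n} is exactly what is needed, is the other point requiring care.
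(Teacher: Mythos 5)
Your proposal is correct in outline but closes the key gap by a genuinely different device than the paper. Both arguments share the same infinitesimal computation (coordinate-wise integration by parts against the product Gaussian/Gibbs measure, with the transport term $v\partial_x$ cancelling against $-\tfrac1m\Phi'\partial_v$ and each coupling pair $-\tfrac1m\sqrt{c_k}z_k\partial_v$, $\sqrt{c_k}v\partial_{z_k}$ cancelling — this is exactly the paper's Lemma~\ref{lem:gen-cut} specialized to your cylinder test functions), and both correctly identify that $\int\L\varphi\,d\mu=0$ alone is not invariance. Where you diverge is in the upgrade. The paper truncates the \emph{nonlinearity}: it replaces $\Phi'(x)$ by $\Phi'(x)\theta_R(x)$, keeps the full infinite mode coupling, and exploits the resulting globally Lipschitz drift to invoke the Kolmogorov backward equation and the derivative bounds $\sup_{r\le t}\|D\PMarkov^R(r)\psi\|_\infty,\ \sup_{r\le t}\|D^2\PMarkov^R(r)\psi\|_\infty<\infty$ from~\cite{da2014stochastic}; the integration by parts is then applied to the evolved test function $\PMarkov^R(r)\psi$, producing the explicit error $\int_0^t\int v\Phi'(x)(1-\theta^R(x))\PMarkov^R(r)\psi\,d\mu\,dr\to0$. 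You instead truncate the \emph{mode coupling}: the block $(x,v,z_1,\dots,z_N)$ keeps the full nonlinear $\Phi'$ and you import the finite-dimensional invariance (Proposition~8.2 of~\cite{pavliotis2014stochastic}, which the paper itself takes as known), tensored with the independent tail OU processes, then pass to the limit in $N$. Your route buys freedom from the infinite-dimensional backward Kolmogorov machinery, at the cost of a Galerkin stability estimate that you only sketch. Two points there need care: (i) the difference $X-X^{(N)}$ is driven not only by the tail forcing $\sum_{k>N}\sqrt{c_k}z_k$ and $\sqrt{c_k}v$, $k>N$ (both of which indeed vanish in $\H_{-s}$-operator norm as $N\to\infty$ since $\sum_k c_k k^{\pm 2s}<\infty$), but also by $\Phi'(x)-\Phi'(x^{(N)})$, and Assumption~\ref{cond:Phi} only makes $\Phi'$ locally Lipschitz, so the Gr\"onwall argument must be run up to a stopping time $\tau_R$ with $\sup_N\Pnone\{\tau_R\le t\}\le C(t,X_0)/R$, which in turn requires the Lyapunov bound of Proposition~\ref{prop:Ito-Lyapunov} to hold uniformly over the truncated systems (it does — the constants $a_1,a_2$ only improve under truncation); (ii) your opening reduction ``Fubini reduces the problem to $\int\L\varphi\,d\mu=0$'' is, as you yourself then note, circular as stated, since the inner integral is against $\mu\PMarkov(s)$ rather than $\mu$ — so the first two paragraphs end up being motivation rather than load-bearing, and the entire weight of the proof rests on the Galerkin passage. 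With those details supplied, the argument is sound.
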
 
The idea behind the proof of Theorem~\ref{thm:density} is simple but the details are non-trivial.  This is because one tries to ``integrate by parts" in infinite-dimensions aiming to show that $\L^* \mu=0$ where $\L^*$ is some very formal adjoint of the Markov generator $\L$.  The way that we circumnavigate this is by showing $\mu$ is ``approximately invariant" for a sequence $X^R(t)$, $R\geq 1$, of processes which approximate $X(t)$ as $R\rightarrow \infty$.  This turns out to be enough to show that $\mu$ is invariant for the original process.

Finally, our last result concerns unique ergodicity in the diffusive regime.  To state it, we impose the following additional condition on the potential $\Phi(x)$.
\begin{assumption}\label{cond:Phi-1} There exist a function $f(x):\rbb\to\rbb^+$ that is bounded on bounded sets and a positive number $q$ such that for all $x,y\in\rbb$,
\begin{equation} \label{ineq:Phi-1}
\left| \Phi'(x)-\Phi'(y) \right|\leq  |x-y|\left(f(x-y)+\Phi(x)^q\right).
\end{equation}

\end{assumption}
\begin{remark} Assumption \ref{cond:Phi-1} is essentially a growth bound on the second derivative of $\Phi$. In particular, if we assume further that there exist $c,\, q_1,\,q_2>0$ such that for all $x,\, y\in\rbb$ and $t\in[0,1]$,
\begin{equation}\label{cond:Phi:second-deriv}
\begin{aligned} 
\left|\Phi''(x)\right|&\leq c(\Phi(x)^{q_1}+1),\\
\text{and}\quad\Phi((1-t)x+ty)&\leq c(\Phi(x)^{q_2}+\Phi(|x-y|)^{q_2}+1),
\end{aligned}
\end{equation}
then $\Phi$ satisfies Assumption \ref{cond:Phi-1}. From this observation, it is a short exercise to see that Condition~\ref{cond:Phi:second-deriv} includes not only the class of non-negative polynomials of even degree, but also functions that even grow faster than a polynomial at infinity, e.g. $\Phi(x) = e^{x^2}$. 
\end{remark}

\begin{theorem} \label{thm:inv-measure:unique} Suppose Assumption \ref{cond:Phi}, Assumption \ref{cond:Phi-1} and Condition (D) of Assumption~\ref{cond:sol'n} are satisfied. Then $\mu$ is the unique invariant measure for the Markov process defined by~\eqref{eqn:GLE-Markov}.  
\end{theorem}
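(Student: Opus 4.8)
The plan is to prove that the Markov semigroup $\{\PMarkov(t)\}_{t\ge0}$ admits \emph{at most} one invariant probability measure by the asymptotic coupling method of \cite{hairer2011asymptotic}; together with Theorem~\ref{thm:density} this gives that $\mu$ is \emph{the} invariant measure. Since $\{\PMarkov(t)\}_{t\ge0}$ is a Feller Markov semigroup on $C_b(\Hs)$ (Proposition~\ref{prop:sol'n:feller}), it suffices to produce, for an arbitrary pair of initial data $X_0,\Xtil_0\in\Hs$, a coupling $(X(t),\Xtil(t))$ of two solutions of~\eqref{eqn:GLE-Markov} such that the law of $X$ is exactly that of the solution started from $X_0$, the law of $\Xtil$ is equivalent (i.e.\ mutually absolutely continuous) to that of the solution started from $\Xtil_0$, and $\|X(t)-\Xtil(t)\|_{-s}\to0$ on an event of positive probability.

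I would build such a coupling by driving both copies with the \emph{same} Brownian motions $W_1,W_2,\dots$ in the memory equations, while in the velocity equation of the second copy replacing $\sqrt{2\gamma}\,dW_0$ by $\sqrt{2\gamma}\,dW_0+u(t)\,dt$ for an adapted feedback control $u$. The noise then cancels in the difference variables $\rho=\xtil-x$, $\eta=\vtil-v$, $\zeta_k=\ztil_k-z_k$, which solve the random ODE
\begin{align*}
\dot\rho &= \eta,\\
m\dot\eta &= -\gamma\eta-\big(\Phi'(\xtil)-\Phi'(x)\big)-\sum_{k\ge1}\sqrt{c_k}\,\zeta_k+u,\\
\dot\zeta_k &= -\lambda_k\zeta_k+\sqrt{c_k}\,\eta,\qquad k\ge1.
\end{align*}
I would take $u$ to cancel the potential difference and the memory term and to add a time-dependent damping in $(\rho,\eta)$, so that $(\rho,\eta)$ obeys a linearly damped oscillator whose decay rate can be made as large as we wish by letting the gains grow in time; this forces $\eta\in L^1\cap L^2(0,\infty)$ and, more generally, arbitrarily fast decay of $\rho$ and $\eta$. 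Then each $\zeta_k(t)=e^{-\lambda_k t}\zeta_k(0)+\sqrt{c_k}\int_0^t e^{-\lambda_k(t-s)}\eta(s)\,ds\to0$, and $\|\zeta(t)\|_{-s}^2=\sum_k k^{-2s}\zeta_k(t)^2\to0$ by splitting into a finite head vanishing pointwise and a tail bounded uniformly in $t$ by $\sum_{k>K_0}k^{-2s}\zeta_k(0)^2+\|\eta\|_{L^1}^2\sum_{k>K_0}k^{-2s}c_k/\lambda_k\to0$. The essential structural input is that $K\in L^1$, i.e.\ $\sum_k c_k/\lambda_k=\int_0^\infty K<\infty$, which by~\eqref{c-k} is exactly $\alpha>1$, namely Condition~(D); this is precisely what allows the memory term $\sum_k\sqrt{c_k}\zeta_k$ to be absorbed, and it is what breaks down in regime~(SD).

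To invoke the asymptotic coupling theorem we must verify that the law of $\Xtil$ is equivalent to that of the solution from $\Xtil_0$; by Girsanov's theorem this reduces to $\int_0^\infty|u(t)|^2\,dt<\infty$ almost surely, while we still need $\|X(t)-\Xtil(t)\|_{-s}\to0$ on a set of positive probability. I would arrange this by switching the control off after the first time $\tau$ that $x$ exits a fixed deterministic envelope, chosen, via an a priori moment bound on $x(t)$ (which in regime~(D) grows only diffusively), so that $\P{\tau=\infty}>0$. On $\{\tau<\infty\}$ the cost is a finite-time integral, hence finite. On $\{\tau=\infty\}$, $x$ is confined to the envelope, so Assumption~\ref{cond:Phi-1} and $\rho(t)\to0$ give $|\Phi'(\xtil(t))-\Phi'(x(t))|\lesssim|\rho(t)|\big(1+\Phi(x(t))^q\big)$ with $\Phi(x(t))^q$ bounded by a fixed function of $t$, and by letting the feedback gains grow fast enough we make $\rho(t),\eta(t)$ decay faster than this function, so that $\int_0^\infty|u|^2<\infty$ and $\|X(t)-\Xtil(t)\|_{-s}\to0$ on $\{\tau=\infty\}$. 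The remaining contributions to $u$ are benign: $\sum_k\sqrt{c_k}\zeta_k(t)=\sum_k\sqrt{c_k}e^{-\lambda_k t}\zeta_k(0)+(K*\eta)(t)$, whose first term lies in $L^2(0,\infty)$ exactly because $s<(\alpha-1)\beta/2$ (so that $\int_0^\infty\sum_k k^{2s}c_k e^{-2\lambda_k t}\,dt<\infty$) and whose second term lies in $L^2$ by Young's inequality since $K\in L^1$ and $\eta\in L^2$.

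The step I expect to be the main obstacle is precisely this cost estimate: engineering a feedback strong enough to contract the difference to zero in the $\Hs$-norm while keeping $\int_0^\infty|u|^2<\infty$ uniformly over the randomness of the driving trajectory, which amounts to absorbing the nonlinear term $\Phi'(\xtil)-\Phi'(x)$ through Assumption~\ref{cond:Phi-1} by forcing $\rho$ to decay faster than $\Phi(x(\cdot))^q$ can grow along controlled trajectories. Granting this, the coupled system is well posed as in Proposition~\ref{prop:sol'n} (the feedback is locally Lipschitz since $\Phi\in C^\infty$), and the criterion of \cite{hairer2011asymptotic} then yields that $\{\PMarkov(t)\}_{t\ge0}$ has at most one invariant probability measure; since $\mu$ is one by Theorem~\ref{thm:density}, it is the unique invariant measure.
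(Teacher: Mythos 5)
Your proposal follows essentially the same route as the paper: a Girsanov shift of the velocity noise only, with the memory modes driven by identical Brownian motions so the noise cancels in the difference; a feedback that cancels $\Phi'(\xtil)-\Phi'(x)$ and the memory sum $\sum_k\sqrt{c_k}\zeta_k$ and imposes linear damping on $(\rho,\eta)$; a stopping time that switches the control off to guarantee a finite Girsanov cost; and the observation that Condition (D) is exactly what makes $\sum_k k^{2s}c_k/\lambda_k<\infty$ so the memory contribution to the cost is integrable. The paper implements this with constant gains ($\dot{\bar v}=-3\lambda\bar v-2\lambda^2\bar x$ with $\lambda=\lambda_1+1$) and a stopping time defined by the accumulated cost $\int_0^t|u_0|^2$ exceeding a threshold $\kappa$, rather than by $x$ leaving an envelope, but these are cosmetic differences.

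The one genuine gap is the step you yourself flag: producing, with positive probability, a \emph{pathwise, uniform-in-time} envelope for $\Phi(x(t))$ against which $\rho(t)^2$ decays fast enough that $\int_0^\infty|\Phi'(\xtil)-\Phi'(x)|^2\,dt<\infty$. An ``a priori moment bound on $x(t)$'' does not deliver this: a bound on $\E{\Phi(x(t))}$ at each $t$ (or even $\E{\sup_{t\le T}\Psi}$ with a Gronwall constant) controls neither $\sup_{t\ge0}e^{-\eta t}\Phi(x(t))$ nor the probability that this supremum is finite, and without such a statement the event on which the cost is finite is not shown to have positive probability. The paper's resolution is a strengthened Lyapunov estimate: a modified functional $\Theta$ (reweighting the first $N$ memory modes) satisfies $\L\Theta\le a$ with \emph{no} multiple of $\Theta$ on the right — and this is precisely where Condition (D) enters a second time, to make the constant $a_1$ in \eqref{eqn:Ito-Lyapunov:diffusive:5} positive — which, combined with the exponential martingale inequality, yields $\Pnone\{\sup_{t\ge0}e^{-\eta t}\Phi(x(t))/m\le\Theta(X_0)+a/\eta+r\}\ge1-e^{-\varepsilon r}$ for \emph{every} $\eta>0$. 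Taking $\eta=\lambda/(2q)$ then lets the fixed gain $\lambda$ beat the envelope growth, with no need for your time-growing gains (which would themselves inflate the $L^2$ cost of the control). Your outline is architecturally correct, but this supremum estimate is the substantive content of the uniqueness proof and needs more than a moment bound.
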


The proof of Theorem~\ref{thm:inv-measure:unique}, which will be given in Section~\ref{sec:invariant-unique}, uses a asymptotic coupling argument following the ideas and results in the works of \cite{glatt2017unique, hairer2011asymptotic, kulik2015generalized}. For a similar rigorous study of finite-dimesional Langevin Equation, we refer the reader to \cite{mattingly2002ergodicity,ottobre2011asymptotic}.

\section{Well-posedness}\label{sec:sol'n}\label{sec:well-posed}
For notational convenience, throughout we write~\eqref{eqn:GLE-Markov} more compactly as the following semilinear stochastic evolution equation
\begin{equation} \label{eqn:infinite-mode-short}
d X(t) = \left(LX(t)+F(X(t)\right)d t+B \, d W(t),\qquad X(0)=X_0\in\H_{-s},
\end{equation}
where $L$ is a linear map given by
\begin{equation} \label{eqn:generator}
LX = \begin{pmatrix}
0 & 1 & 0 & 0  & \dots\\
0 & -\frac{\gamma}{m} & -\frac{\sqrt{c_1}}{m} &  -\frac{\sqrt{c_2}}{m} & \dots\\
0&\sqrt{c_1} &-\lambda_1 &0  & \dots\\
0&\sqrt{c_2} & 0  & -\lambda_2  &\dots\\
\vdots & \vdots &\vdots &\vdots &\ddots 
\end{pmatrix}\begin{pmatrix}
x\\v\\z_1\\z_2\\ \vdots
\end{pmatrix}=\begin{pmatrix}
v \\ -\frac{\gamma}{m} v-\frac{1}{m}\sum_{k\geq 1} \sqrt{c_k} z_k\\
-\lambda_1 z_1+ \sqrt{c_1}v\\
-\lambda_2 z_2+ \sqrt{c_2}v\\
\vdots
\end{pmatrix},
\end{equation}
where $c_k,\,\lambda_k$ are as in~\eqref{c-k} and the force $F$ is defined as
\begin{align*}
F(X)=(0, -\Phi'(x)/m,0,0, \dots)^T.
\end{align*}
Regarding the stochastic term $B\, dW$, we may formally write
\begin{align*}
 B\, dW(t)=\begin{pmatrix}
 0& 0&0 &\dots\\
 \frac{\sqrt{2\gamma}}{m}&0&0&\cdots\\
 0&\sqrt{2\lambda_1}&0&\cdots\\
 \vdots&\vdots&\vdots&\ddots
 \end{pmatrix}\begin{pmatrix}
 dW_0\\dW_1\\dW_2\\ \vdots
 \end{pmatrix}=\begin{pmatrix}
0\\ \frac{\sqrt{2\gamma}}{m}dW_0(t) \\
\sqrt{2\lambda_1}dW_1(t)\\ \sqrt{2\lambda_2}dW_2(t) \\ \vdots
\end{pmatrix}.
\end{align*}
Following the formulation in, for example, \cite{da2014stochastic}, fix an auxiliary Hilbert space $\W$ and pick a complete orthonormal basis $\{e_k^{\W}\}_{k\geq 0}$. The cylindrical Wiener process $W(t)$ on the Hilbert space $\W$ is then defined as
\begin{equation}\label{eqn:Wiener}
W(t)=W_0(t)e^{\W}_0+W_1(t)e^{\W}_1+W_2(t)e^{\W}_2+\dots,
\end{equation}
where the sequence $\{W_k(t)\}_{k\geq 0}$ are independent one-dimensional Brownian Motions. We can then define $B:\W\to\H_{-s}$ by its action
\begin{equation} \label{eqn:B}
Be^{\W}_0=\frac{\sqrt{2\gamma}}{m}e_v,\qquad\text{and}\qquad Be^{\W}_k=\sqrt{2\lambda_k}k^{-s}e_k,\ k\geq 1,
\end{equation}
where $\left\{e_x,e_v,e_1,e_2,\dots\right\}$ is the canonical basis of $\H_{-s}$, cf.~\eqref{form:H-basis}. In view of \eqref{eqn:B}, we have
\begin{equation} \label{eqn:density-4}
BB^*e_x=0,\qquad BB^*e_v=\frac{2\gamma}{m^2}e_v,\qquad\text{and}\qquad BB^*e_k=2\lambda_k k^{-2s}e_k,\ k\geq 1.
\end{equation}

In order to prove well-posedness of Equation~\eqref{eqn:infinite-mode-short}, we need the following basic fact.  
\begin{proposition} \label{prop:L:bounded} Suppose that $\alpha, \beta$ as in~\eqref{c-k} and $s$ as in \eqref{eqn:H_p} satisfy $0\leq 2s<\alpha\beta$. Then, $L:\H_{-s}\to \H_{-s}$ defined in \eqref{eqn:generator} is a bounded linear operator.
\end{proposition}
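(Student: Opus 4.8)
The plan is to verify directly that $L$ maps $\H_{-s}$ into itself and is bounded there; linearity is immediate from the definition~\eqref{eqn:generator}. Writing $X=(x,v,z_1,z_2,\dots)$ and reading off the components of $LX$ from~\eqref{eqn:generator}, one has
\[
\|LX\|_{-s}^2 = v^2 + \Big(\frac{\gamma}{m}v + \frac{1}{m}\sum_{k\geq 1}\sqrt{c_k}\,z_k\Big)^2 + \sum_{k\geq 1}k^{-2s}\big(\sqrt{c_k}\,v - \lambda_k z_k\big)^2,
\]
so it suffices to bound each of the three groups of terms by a fixed constant multiple of $\|X\|_{-s}^2 = x^2+v^2+\sum_{k\geq 1}k^{-2s}z_k^2$. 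The first term is trivially at most $\|X\|_{-s}^2$.

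For the velocity component I would expand the square and apply the Cauchy--Schwarz inequality to the series in the form
\[
\Big|\sum_{k\geq 1}\sqrt{c_k}\,z_k\Big|^2 = \Big|\sum_{k\geq 1}\big(k^{s}\sqrt{c_k}\big)\big(k^{-s}z_k\big)\Big|^2 \leq \Big(\sum_{k\geq 1}k^{2s}c_k\Big)\Big(\sum_{k\geq 1}k^{-2s}z_k^2\Big).
\]
This is the single place where the hypothesis is used: since $c_k=k^{-(1+\alpha\beta)}$, we have $\sum_{k\geq 1}k^{2s}c_k = \sum_{k\geq 1}k^{2s-1-\alpha\beta}$, which is finite precisely when $2s-1-\alpha\beta<-1$, i.e.\ when $2s<\alpha\beta$. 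Hence, under the hypothesis $0\le 2s<\alpha\beta$, the second group is bounded by a constant times $v^2+\sum_{k\geq1}k^{-2s}z_k^2\le\|X\|_{-s}^2$; in particular the series $\sum_{k\geq1}\sqrt{c_k}z_k$ appearing in the drift converges absolutely, so $LX$ is a well-defined element of $\rbb^\infty$.

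For the $z_k$-components I would use $(\sqrt{c_k}v-\lambda_k z_k)^2\le 2c_k v^2+2\lambda_k^2 z_k^2$. Since $\lambda_k=k^{-\beta}\le 1$ for $k\ge1$, the second contribution sums to at most $2\sum_{k\geq1}k^{-2s}z_k^2\le 2\|X\|_{-s}^2$, while the first is $2v^2\sum_{k\geq1}k^{-2s}c_k=2v^2\sum_{k\geq1}k^{-2s-1-\alpha\beta}$, a convergent series because the exponent is strictly below $-1$ (using $2s\ge0$ and $\alpha\beta>0$). Assembling the three estimates gives $\|LX\|_{-s}^2\le C\|X\|_{-s}^2$ for a constant $C=C(\gamma,m,\alpha,\beta,s)$, and in particular $LX\in\H_{-s}$, which is the claim. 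There is no genuine obstacle in this argument; the only point requiring care is the bookkeeping that identifies convergence of $\sum_{k\geq1} k^{2s}c_k$ with exactly the condition $2s<\alpha\beta$, which is what singles that condition out as the natural one.
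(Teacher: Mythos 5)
Your proposal is correct and follows essentially the same route as the paper's proof: expand $\|LX\|_{-s}^2$ componentwise, apply Cauchy--Schwarz in the weighted form $\big|\sum_k (k^s\sqrt{c_k})(k^{-s}z_k)\big|^2\le\big(\sum_k k^{2s}c_k\big)\big(\sum_k k^{-2s}z_k^2\big)$, and bound the $z_k$-rows using $\lambda_k\le 1$, with the hypothesis $2s<\alpha\beta$ entering exactly through the convergence of $\sum_k k^{2s-1-\alpha\beta}$. The only (welcome) addition is your explicit remark that this also shows the series in the drift converges absolutely, so $LX$ is well defined.
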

\begin{proof} Recalling~\eqref{c-k}, \eqref{eqn:H-norm}, \eqref{eqn:generator} and invoking Cauchy–Schwarz inequality, we estimate
\begin{equation*}
\begin{aligned}
\MoveEqLeft[0]\|LX\|_{-s}^2\\
& =  v^2+\Big(-\frac{\gamma}{m} v-\frac{1}{m}\sum_{k\geq 1} \sqrt{c_k} z_k\Big)^2+\sum_{k\geq 1}k^{-2s}(-\lambda_k z_k+\sqrt{c_k}v)^2\\
&\leq v^2+\frac{2\gamma^2}{m^2}v^2+\frac{2}{m^2}\Big(\sum_{k\geq 1} \sqrt{c_k} z_k\Big)^2+\sum_{k\geq 1}2\lambda_k^2k^{-2s}z_k^2+\sum_{k\geq 1}2k^{-2s}c_k v^2\\
&\leq \Big(1+\frac{2\gamma^2}{m^2}+2\sum_{k\geq 1}k^{-2s}c_k\Big)v^2+ \frac{2}{m^2}\sum_{k\geq 1}c_k k^{2s}\sum_{k\geq 1}k^{-2s} z_k^2+2\sum_{k\geq 1}\lambda_k^2 k^{-2s}z_k^2 \\
&=\Big(1+\frac{2\gamma^2}{m^2}+2\sum_{k\geq 1}k^{-2s}k^{-(1+\alpha\beta)}\Big)v^2+ \frac{2}{m^2}\sum_{k\geq 1}k^{-(1+\alpha\beta)} k^{2s}\sum_{k\geq 1}k^{-2s} z_k^2\\
&\qquad\qquad\qquad\qquad\qquad\qquad\qquad\qquad\!+2\sum_{k\geq 1}k^{-\beta} k^{-2s}z_k^2 \\
&\leq \Big(1+\frac{2\gamma^2}{m^2}+2\sum_{k\geq 1}k^{-(1+\alpha\beta+2s)}+\frac{2}{m^2}\sum_{k\geq 1}k^{-(1+\alpha\beta-2s)}+2\Big)\|X\|_{-s}^2,
\end{aligned}
\end{equation*}
Since $\sum_{k\geq 1}k^{-(1+\alpha\beta-2s)}$ converges whenever $\alpha\beta>2s$, the desired result follows.
\end{proof}

The proof of Proposition~\ref{prop:sol'n} follows a Lyapunov argument which we now explain.  Let $\theta_R\in C^\infty(\rbb ; [0,1])$  satisfy
\begin{align} \label{defn:theta-R}
\theta_R(x) = \begin{cases}
1 & \text{ if } |x| \leq R \\
0 & \text{ if } |x| \geq R+1
\end{cases}
\end{align}
and consider ``cutoff" equation corresponding to~\eqref{eqn:infinite-mode-short}
\begin{equation}\label{eqn:infinite-mode-short-cut}
d X^R(t) = \left[LX^R(t)+F\left(X^R(t)\right)\theta_R\left(x^R(t)\right)\right]d t+B \, d W(t),\qquad X^R(0)=X_0 .  
\end{equation}
For each $R>0$, it is not hard to prove that the global (in time) solution $X^R$ exists and is unique, giving local (up until the time of explosion) pathwise existence and uniqueness of~\eqref{eqn:infinite-mode-short}.  Then, using a Lyapunov function $\Psi(X)$ that dominates the norm of $X$ in $\H_{-s}$, we show a global bound on these solutions that does not depend on $R$, thereby obtaining global solutions of~\eqref{eqn:infinite-mode-short}.

To prove Proposition~\ref{prop:sol'n}, we begin with the following proposition.  
\begin{proposition}[Local Existence] \label{prop:local-sol'n} Suppose that $\alpha, \beta$ as in~\eqref{c-k} and $s$ as in \eqref{eqn:H_p} satisfy $1< 2s<\alpha\beta$. Let $X_0\in\H_{-s}$ be given. For each $R>0$, there exists a unique $X^R(t)\in L^2\left(\Omega,C\left([0,\infty);\H_{-s}\right)\right)$ satisfying \eqref{eqn:infinite-mode-short-cut}. 
\end{proposition}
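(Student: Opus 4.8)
The plan is to put~\eqref{eqn:infinite-mode-short-cut} in a form where the only stochastic object is an additive Ornstein--Uhlenbeck term and the remainder is a deterministic ODE with globally Lipschitz right-hand side. Since $1<2s<\alpha\beta$, Proposition~\ref{prop:L:bounded} applies, so $L:\Hs\to\Hs$ is bounded and $\{e^{tL}\}_{t\geq 0}$ is a uniformly continuous semigroup with $\|e^{tL}\|\leq e^{\|L\|t}$. Moreover $s>1/2$ forces $B:\W\to\Hs$ to be Hilbert--Schmidt: by~\eqref{eqn:B} one has $\|e_v\|_{-s}=\|e_k\|_{-s}=1$, so $\|B\|_{\HS}^2=\tfrac{2\gamma}{m^2}+2\sum_{k\geq 1}k^{-(\beta+2s)}<\infty$ because $\beta+2s>2s>1$. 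Hence I would first introduce the stationary-type process $Z(t)=\int_0^t e^{(t-r)L}B\,dW(r)$, i.e. the solution of $dZ=LZ\,dt+B\,dW$ with $Z(0)=0$; it is a well-defined $\Hs$-valued, $\mathcal{F}_t$-adapted process with almost surely continuous paths, and $\E{\sup_{0\leq t\leq T}\|Z(t)\|_{-s}^2}<\infty$ for every $T$. This is the only input that is not elementary ODE theory: it follows from the factorization method of \cite{da2014stochastic}, or more simply by noting that $e^{-tL}Z(t)=\int_0^t e^{-rL}B\,dW(r)$ is an $\Hs$-valued martingale, applying Doob's inequality, and using $\|Z(t)\|_{-s}\leq e^{\|L\|T}\|e^{-tL}Z(t)\|_{-s}$.

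Next I would record the two features of the truncated drift $N_R(X):=F(X)\theta_R(x)=(0,-\Phi'(x)\theta_R(x)/m,0,\dots)^T$. Because $\Phi\in C^\infty(\rbb)$ and $\theta_R$ is smooth with support in $[-(R+1),R+1]$, the scalar map $x\mapsto\Phi'(x)\theta_R(x)$ is $C^\infty$ with compact support, hence bounded on $\rbb$ by some $M_R$ and globally Lipschitz with some constant $\ell_R$; using $|x-\widetilde{x}|\leq\|X-\widetilde{X}\|_{-s}$ this gives $\|N_R(X)\|_{-s}\leq M_R/m$ and $\|N_R(X)-N_R(\widetilde{X})\|_{-s}\leq(\ell_R/m)\|X-\widetilde{X}\|_{-s}$ for all $X,\widetilde{X}\in\Hs$, with $M_R,\ell_R$ independent of the solution. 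Writing $Y^R:=X^R-Z$, equation~\eqref{eqn:infinite-mode-short-cut} becomes the random ODE
\begin{equation*}
\tfrac{d}{dt}Y^R(t)=LY^R(t)+N_R\big(Y^R(t)+Z(t)\big),\qquad Y^R(0)=X_0,
\end{equation*}
whose right-hand side, for almost every fixed $\omega$, is continuous in $t$ (since $Z(\cdot,\omega)\in C([0,\infty);\Hs)$) and globally Lipschitz in $Y^R$ with constant $\|L\|+\ell_R/m$ uniform in $t$. The Picard--Lindel\"of theorem in the Banach space $\Hs$ then yields a unique global solution $Y^R(\cdot,\omega)\in C^1([0,\infty);\Hs)$; adaptedness of $Y^R$, hence of $X^R:=Y^R+Z$, is inherited from the Picard iterates, which are adapted. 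Pathwise uniqueness for the ODE gives uniqueness of $X^R$ in the stated class.

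Finally I would derive the $L^2$ bound. Since $\|N_R\|_{-s}\leq M_R/m$, Gr\"onwall's inequality applied to $\|Y^R(t)\|_{-s}\leq\|X_0\|_{-s}+(M_R/m)t+\|L\|\int_0^t\|Y^R(r)\|_{-s}\,dr$ gives $\sup_{0\leq t\leq T}\|Y^R(t)\|_{-s}\leq(\|X_0\|_{-s}+M_RT/m)e^{\|L\|T}$, a deterministic bound, so
\begin{equation*}
\E{\sup_{0\leq t\leq T}\|X^R(t)\|_{-s}^2}\leq 2\big(\|X_0\|_{-s}+M_RT/m\big)^2 e^{2\|L\|T}+2\,\E{\sup_{0\leq t\leq T}\|Z(t)\|_{-s}^2}<\infty,
\end{equation*}
which shows $X^R\in L^2\big(\Omega;C([0,T];\Hs)\big)$ for every $T$, hence $X^R\in L^2\big(\Omega;C([0,\infty);\Hs)\big)$. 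The main (and essentially only) technical point is establishing the continuity and finite second moment of the stochastic convolution $Z$; after that, boundedness of $L$ together with the fact that the cutoff $\theta_R$ turns the merely locally Lipschitz drift $F$ into a bounded, globally Lipschitz perturbation reduces everything to deterministic ODE theory. An equivalent route is to run the Banach fixed point directly on $L^2\big(\Omega;C([0,T];\Hs)\big)$ with a Bielecki-type weighted sup-norm; the required estimates on $e^{tL}$, $N_R$, and $Z$ are exactly the same.
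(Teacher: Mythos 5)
Your proof is correct and rests on exactly the same ingredients as the paper's (boundedness of $L$ from Proposition~\ref{prop:L:bounded}, boundedness and global Lipschitzness of the cutoff drift, and square-integrability of the additive noise in $\H_{-s}$), concluded by a Picard/contraction argument; the paper simply invokes ``a standard Banach fixed point argument'' at this point. Your pathwise reduction to a random ODE after subtracting the stochastic convolution $Z$ is a fully detailed and valid instantiation of that standard argument, and, as you note yourself, it is equivalent to running the fixed point directly in $L^2\left(\Omega;C([0,T];\H_{-s})\right)$.
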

\begin{remark} We see that the conditions stated in Assumption~\ref{cond:sol'n} meet the hypothesis $1<2s<\alpha\beta$ of Proposition~\ref{prop:local-sol'n}.  
\end{remark}
\begin{proof}[Proof of Proposition \ref{prop:local-sol'n}]
The linear map $L$ is bounded by Lemma \ref{prop:L:bounded} and the nonlinear term in~\eqref{eqn:infinite-mode-short-cut} is globally Lipschitz in $\| \cdot \|_{-s}$ by construction. Moreover, the additive noise term lives in $\H_{-s}$ almost surely since
\begin{align*}
\Enone \Big\|\int_0^T\!\! B\, dW(t)\Big\|^2_{-s}& =\Enone\Big|\int_0^T\!\!\frac{\sqrt{2\gamma}}{m}dW_0(t)\Big|^2 + \sum_{k\geq 1}k^{-2s}\Enone\Big|\int_0^T\!\! \sqrt{2\lambda_k} \, dW_k(t)\Big|^2\\
&=\frac{2\gamma T}{m^2}+2T\sum_{k\geq 1}k^{-2s}\lambda_k<\infty.
\end{align*}
The corresponding solution hence exists and is unique by a standard Banach fixed point argument. 
\end{proof}
Next, inspired by \cite{mattingly2002ergodicity,pavliotis2014stochastic}, we introduce a Lyapunov function $\Psi:\H_{-s}\to[0,\infty)$ given by
\begin{equation} \label{eqn:Lyapunov}
\Psi(X):=\frac{1}{m}\Phi(x)+\frac{1}{2} v^2+\frac{1}{2}\sum_{k \geq 1}k^{-2s}z_k^2.
\end{equation}
Define $\mathcal{L}:C^2(\H_{-s})\to\rbb$ to be the operator given by
\begin{equation*}
\L\f(X):= \la D\f(X),LX+F(X)\ra+\frac{1}{2}\Tr( D^2\f BB^*).
\end{equation*}
In view of~\eqref{eqn:Frechet:first}, \eqref{eqn:Frechet:second}, \eqref{eqn:generator} and \eqref{eqn:density-4}, $\L$ can be explicitly written as
 \begin{multline} \label{defn:Markov-gen}
\L\f(X) = v\frac{\partial\f(X)}{\partial x}+\Big(\!\!-\frac{\gamma}{m}v-\frac{1}{m}\Phi'(x)-\frac{1}{m}\sum_{k\geq 1}\sqrt{c_k}z_k\Big)\frac{\partial\f(X)}{\partial v}\\
+\sum_{k\geq 1} \left(-\lambda_kz_k+\sqrt{c_k}v\right)\frac{\partial\f(X)}{\partial z_k}+ \frac{\gamma}{m^2}\frac{\partial ^2\f(X)}{\partial v^2}+\sum_{k\geq 1}\lambda_k\frac{\partial ^2\f(X)}{\partial z_k^2}
\end{multline}
where $\varphi\in C^2(\H_{-s})$. Note that, once we establish Proposition~\ref{prop:sol'n}, $\L$ is in fact the infinitesimal generator of the Markov semigroup $\left\{\PMarkov(t)\right\}_{t \geq 0}$ associated with~\eqref{eqn:GLE-Markov}. We assert the following proposition.

\begin{proposition}[Global bound] \label{prop:Ito-Lyapunov} Suppose that Assumption \ref{cond:Phi} and Assumption~\ref{cond:sol'n} are satisfied. Let $\Psi(X)$ be defined in \eqref{eqn:Lyapunov} and $\L$ be the operator as in \eqref{defn:Markov-gen}. Then, for every $X\in\H_{-s}$,
\begin{equation} \label{ineq:Ito-Lyapunov}
\L \Psi(X) \leq a_1\Psi(X)+a_2,
\end{equation}
where $a_1,\, a_2$ are finite constants that can be explicitly given as
\begin{displaymath}
a_1=\max\Big\{1+\frac{1}{m},\frac{1}{m}\sum_{k\geq 1}c_kk^{2s}+\sum_{k\geq 1}c_kk^{-2s}\Big\}\text{  and  }a_2=\frac{\gamma}{m^2}+\sum_{k\geq 1}k^{-2s}\lambda_k.
\end{displaymath}
\end{proposition}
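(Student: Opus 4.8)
The plan is to compute $\L\Psi$ directly from the explicit representation \eqref{defn:Markov-gen} and then estimate the resulting expression term by term. First I would observe that $\Psi$ is $C^2$ on $\H_{-s}$: by \eqref{eqn:H-norm} it differs from $\tfrac1m\Phi(x)+\tfrac12\|X\|_{-s}^2$ only by the smooth quadratic functional $\tfrac12 x^2$, and $x\mapsto\Phi(x)$ is smooth by Assumption~\ref{cond:Phi}, so formula~\eqref{defn:Markov-gen} is valid for $\Psi$. Reading the partial derivatives off \eqref{eqn:Lyapunov},
\begin{align*}
\frac{\partial\Psi}{\partial x}=\frac1m\Phi'(x),\quad \frac{\partial\Psi}{\partial v}=v,\quad \frac{\partial\Psi}{\partial z_k}=k^{-2s}z_k,\quad \frac{\partial^2\Psi}{\partial v^2}=1,\quad \frac{\partial^2\Psi}{\partial z_k^2}=k^{-2s},
\end{align*}
and substituting into \eqref{defn:Markov-gen} one finds, after the two terms $\pm\tfrac1m v\Phi'(x)$ cancel (this cancellation, forced by the weight $\tfrac1m$ on $\Phi$ in $\Psi$, is the whole point of the construction, and means no growth from the potential survives),
\begin{align*}
\L\Psi(X)=-\frac\gamma m v^2-\sum_{k\geq1}\lambda_k k^{-2s}z_k^2-\frac1m v\sum_{k\geq1}\sqrt{c_k}z_k+v\sum_{k\geq1}\sqrt{c_k}k^{-2s}z_k+\frac{\gamma}{m^2}+\sum_{k\geq1}\lambda_k k^{-2s}.
\end{align*}
The constant is exactly the claimed $a_2=\tfrac{\gamma}{m^2}+\sum_{k\geq1}\lambda_k k^{-2s}$, finite since $s>1/2$ forces $\beta+2s>1$.

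Next I would dispose of the two $v$--$z$ cross terms by Young's inequality, $\tfrac1m v\sum_k\sqrt{c_k}z_k\leq \tfrac1{2m}v^2+\tfrac1{2m}(\sum_k\sqrt{c_k}z_k)^2$ and $v\sum_k\sqrt{c_k}k^{-2s}z_k\leq\tfrac12 v^2+\tfrac12(\sum_k\sqrt{c_k}k^{-2s}z_k)^2$, followed by Cauchy--Schwarz, writing $\sqrt{c_k}z_k=(\sqrt{c_k}k^s)(k^{-s}z_k)$ and $\sqrt{c_k}k^{-2s}z_k=(\sqrt{c_k}k^{-s})(k^{-s}z_k)$, to obtain $(\sum_k\sqrt{c_k}z_k)^2\leq(\sum_k c_k k^{2s})\sum_k k^{-2s}z_k^2$ and $(\sum_k\sqrt{c_k}k^{-2s}z_k)^2\leq(\sum_k c_k k^{-2s})\sum_k k^{-2s}z_k^2$. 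Discarding the two negative dissipative terms then yields
\begin{align*}
\L\Psi(X)\leq \tfrac12\big(1+\tfrac1m\big)v^2+\tfrac12\Big(\tfrac1m\sum_{k\geq1} c_k k^{2s}+\sum_{k\geq1} c_k k^{-2s}\Big)\sum_{k\geq1} k^{-2s}z_k^2+a_2,
\end{align*}
and since $\Phi\geq0$ we have $v^2+\sum_{k\geq1} k^{-2s}z_k^2\leq 2\Psi(X)$, so with $a_1$ as defined the right-hand side is at most $a_1\Psi(X)+a_2$, which is \eqref{ineq:Ito-Lyapunov}.

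The computation really is the whole proof, so I do not anticipate a serious obstacle; the only point requiring care is verifying that $a_1$ and $a_2$ are finite, and this is precisely where Assumption~\ref{cond:sol'n} enters. Indeed $\sum_k c_k k^{-2s}=\sum_k k^{-(1+\alpha\beta+2s)}$ and $\sum_k\lambda_k k^{-2s}=\sum_k k^{-(\beta+2s)}$ converge (the latter because $2s>1$), while $\sum_k c_k k^{2s}=\sum_k k^{-(1+\alpha\beta-2s)}$ converges exactly when $2s<\alpha\beta$, and Assumption~\ref{cond:sol'n} guarantees $s<(\alpha-1)\beta/2<\alpha\beta/2$ in regime (D), $s<\beta/2=\alpha\beta/2$ in regime (C), and $s<\alpha\beta/2$ in regime (SD). A minor additional remark is that, for fixed $X\in\H_{-s}$, all the series manipulated above are absolutely convergent (again by Cauchy--Schwarz together with $\sum_k k^{-2s}z_k^2<\infty$), so the rearrangements are legitimate and $\L\Psi(X)$ is a well-defined real number. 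If any genuine difficulty arises it is purely bookkeeping, namely arranging the constants so that they come out to the precise values of $a_1$ and $a_2$ stated rather than merely ``some finite constants.''
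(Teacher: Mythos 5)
Your proposal is correct and follows essentially the same route as the paper: compute $\L\Psi$ explicitly (exploiting the cancellation of the $\pm\frac{1}{m}v\Phi'(x)$ terms), bound the two $v$--$z$ cross terms via Young's inequality together with the Cauchy--Schwarz splitting $\sqrt{c_k}z_k=(\sqrt{c_k}k^{s})(k^{-s}z_k)$, and absorb the result into $a_1\Psi(X)$ after discarding the dissipative terms; the finiteness checks on $a_1,a_2$ under Assumption~\ref{cond:sol'n} are also the same. Nothing is missing.
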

\begin{proof} Applying Assumption~\ref{cond:sol'n}, first note that $a_1$ and $a_2$ are both finite since 
\begin{displaymath}
\sum_{k\geq 1}c_kk^{2s}=\sum_{k\geq 1}\frac{1}{ k^{1+\alpha\beta-2s}}<\infty\quad\text{and}\quad \sum_{k\geq 1}k^{-2s}\lambda_k=\sum_{k\geq 1}\frac{1}{k^{\beta+2s}}<\infty.  
\end{displaymath}
We now apply $\L$ to $\Psi$ to see that
\begin{equation}\label{eqn:Ito-Lyapunov}
\begin{aligned}
\L\Psi(X)
&=-\frac{\gamma}{m}v^2-\sum_{k\geq 1}\lambda_kk^{-2s}z_k^2\\
& \qquad -\frac{1}{m}\sum_{k\geq 1}\sqrt{c_k} z_kv+\sum_{k\geq 1}\sqrt{c_k}k^{-2s}z_kv+\frac{\gamma}{m^2}+\sum_{k\geq 1}k^{-2s}\lambda_k .  
\end{aligned}
\end{equation}
The cross terms between $z_k$ and $v$ can be bounded using H\"{o}lder's inequality as follows: 
\begin{equation}\label{eqn:Ito-Lyapunov:1}
\begin{aligned}   
\MoveEqLeft[2] -\frac{1}{m}\sum_{k\geq 1}\sqrt{c_k} z_k v +\sum_{k\geq 1}\sqrt{c_k}k^{-2s}z_k v  \\
&   \leq \frac{1}{2m}\Big(\sum_{k\geq 1}c_kk^{2s}\sum_{k\geq 1}k^{-2s}z_k ^2+v ^2\Big)+\frac{1}{2}\Big(\sum_{k\geq 1}c_kk^{-2s}\sum_{k\geq 1}k^{-2s}z_k ^2+v ^2\Big)\\
 &= \frac{1}{2}\Big(1+\frac{1}{m}\Big)v ^2+\frac{1}{2}\Big(\frac{1}{m}\sum_{k\geq 1}c_kk^{2s}+\sum_{k\geq 1}c_kk^{-2s}\Big)\sum_{k\geq 1}k^{-2s}z_k ^2\\
 &\leq a_1 \Psi(X),
\end{aligned}
\end{equation}
where $a_1>0$ is as in the statement of the result. We finally combine \eqref{eqn:Ito-Lyapunov} with \eqref{eqn:Ito-Lyapunov:1} to obtain \eqref{ineq:Ito-Lyapunov}.
\end{proof}

We are now ready to prove the main existence and uniqueness result for equation~\eqref{eqn:infinite-mode-short}. The argument is classical and can be found in literature, e.g.,  \cite{albeverio2008spde,glatt2009strong,jacod2006calcul}.

\begin{proof}[Proof of Proposition \ref{prop:sol'n}] For every $R>0$, let $X^R(t)$ be the unique solution of the cutoff system \eqref{eqn:infinite-mode-short-cut} given to us by Proposition \ref{prop:local-sol'n}. Define the stopping time 
\[\tau_R=\inf\left\{t>0: \|X(t)\|_{-s}>R\right\}.\]
Note that, for all times $t< \tau_R$, $X^R$ solves \eqref{eqn:infinite-mode-short}. Consequently, the solution \eqref{eqn:infinite-mode-short} exists and is unique up until the \emph{time of explosion} $\tau_\infty=\lim_{R\to\infty}\tau_R$, which is possibly finite on a set of positive probability. We show using the Lyapunov function $\Psi$ above, cf.~\eqref{eqn:Lyapunov} that $\tau_\infty=\infty$ a.s.

By Ito's Formula we have that
\begin{equation}\label{eqn:Ito-Lyapunov:2}
\begin{aligned}
 d\Psi(X(t\mi\tau_R))=\mathcal{L}\Psi(X(t\mi\tau_R))dt & + \frac{\sqrt{2\gamma}}{m}v(t\mi\tau_R)dW_0(t)\\
 &+ \sum_{k\geq 1}\sqrt{2\lambda_k}k^{-2s}z_k(t\mi\tau_R)dW_k(t),
\end{aligned}
\end{equation}
where $\L$ is the operator defined in \eqref{defn:Markov-gen}. We then infer the following bound\begin{multline}\label{ineq:Ito-Lyapunov:2a}
\Enone\Big[\sup_{0\leq t\leq T} \Psi(X(t\wedge\tau_R)) \Big]
\leq \Psi(X_0)+\Enone\sup_{0\leq t\leq T} \int_0^t\!\!\L\Psi(X(r\wedge\tau_R))dr\\+\Enone\sup_{0\leq t\leq T}\Big|\int_0^t\!\! \frac{\sqrt{2\gamma}}{m}v(r\mi\tau_R)dW_0(r)+ \sum_{k\geq 1}\sqrt{2\lambda_k}k^{-2s}z_k(t\mi\tau_R)dW_k(r)dr \Big|.
\end{multline}
Applying Proposition \ref{prop:Ito-Lyapunov} on $\L\Psi(X(r\mi\tau_R))$ and the Burkholder-Davis-Gundy inequality on the martingale term on the above RHS yields
\begin{multline}\label{ineq:Ito-Lyapunov:3}
\Enone \Big[\sup_{0\leq t\leq T} \Psi(X(t\wedge\tau_R)) \Big]
\leq \Psi(X_0)+a_2T+a_1\Enone\int_0^T\!\sup_{0\leq r\leq t} \Psi(X(r\wedge\tau_R))dt\\+c\Big[\Enone\int_0^T\! \frac{2\gamma}{m^2}v(t\wedge\tau_R)^2
+\sum_{k\geq 1}2\lambda_k k^{-4s}z_k(t\wedge\tau_R)^2dt \Big]^{1/2},
\end{multline}
where $a_1,\, a_2$ are as in \eqref{ineq:Ito-Lyapunov} and $c>0$ is the constant from Burkholder-Davis-Gundy's inequality and is independent of $R,\, T,\, X_0$. We observe now that the last integrand in \eqref{ineq:Ito-Lyapunov:3} is dominated by $\Psi(X(t\wedge\tau_R))$. We thus infer that
\begin{equation}\label{ineq:Ito-Lyapunov:4}
\Enone \Big[\sup_{0\leq t\leq T} \Psi(X(t\wedge\tau_R))\Big]\leq \Psi(X_0)+c_1+ c_2\int_0^T\Enone\Big[\sup_{0\leq r\leq t} \Psi(X(r\wedge\tau_R))\Big]dt,
\end{equation}
where $c_1,\,c_2>0$ are constants independent of $R$ and $X_0$. Gronwall's inequality then implies that
\begin{equation}\label{ineq:Ito-Lyapunov:5}
\Enone \Big[\sup_{0\leq t\leq T} \Psi(X(t\wedge\tau_R)) \Big]\leq \left(\Psi(X_0)+c_1\right)e^{c_2 T}.
\end{equation}
Also note that there exists a constant $c>0$ such that for all $X\in\H_{-s} $, $c(\Psi(X)+1)\geq \|X\|^2_{-s}$. We thus infer the existence of a constant $C(T,X_0)>0$ such that
\begin{equation}\label{ineq:Ito-Lyapunov:6}
\Enone\Big[\sup_{0\leq t\leq T} \|X(t\wedge\tau_R)\|_{-s}^2 \Big]\leq C(T,X_0).
\end{equation}
Sending $R$ to infinity in the above, we obtain by Fatou's lemma  
\begin{equation}\label{ineq:Ito-Lyapunov:7}
\Enone\Big[\sup_{0\leq t\leq T} \|X(t\wedge\tau_\infty)\|_{-s}^2 \Big]\leq C(T,X_0).
\end{equation}
Hence, this implies $\P{T<\tau_\infty}=1$ for any $T>0$. By sending $T$ to infinity, we see that $\P{\tau_\infty=\infty}=1$, which implies the well-posedness of the global solution for any fixed $X_0 \in \mathcal{H}_{-s}$.  

We finally note that $C(T,X_0)$ is actually dominated by $\Psi(X_0)e^{c_2T}$ following from estimates~\eqref{ineq:Ito-Lyapunov:5} and~\eqref{ineq:Ito-Lyapunov:6}. It is also clear that $\Psi(X)$ is bounded on bounded sets in $\Hs$. We therefore obtain the bound in~\eqref{ineq:strong-sol'n:sup}, which concludes the proof.
\end{proof}
In addition to pathwise existence and uniqueness of the solution of equation~\eqref{eqn:infinite-mode-short}, we will need the following basic properties of the Markov semigroup $\PMarkov(t):\B_b(\H_{-s})\to\B_b(\H_{-s})$. We recall that $\PMarkov(t)$  defined as in \eqref{defn:Markov-semigroup} possesses the Markov property; namely, for every $X\in \H_{-s}$, $\f\in C_b(\H_{-s})$, $t,\, r\geq 0$,
\[\PMarkov(t+r)\f(X)=\PMarkov(t)\left(\PMarkov(r)\f\right)(X).\]
\begin{proposition}\label{prop:sol'n:feller} Under the Hypothesis of Proposition \ref{prop:sol'n}, let $X(t)$ be the unique strong solution of \eqref{eqn:GLE-Markov} and $\PMarkov(t)$ be the corresponding Markov semigroup. We have the following:
\begin{itemize}
\item[(a)] Whenever $X_k\rightarrow X_0$ in $\H_{-s}$ 
\begin{equation}\label{eqn:sol'n-1}
\lim_{k\to\infty}\Enone\|X(t,X_k)-X(t,X_0)\|_{-s}=0.
\end{equation}
\item[(b)] $\PMarkov(t)$ has the Feller property: $\PMarkov(t)\varphi\in C_b(\H_{-s})$ whenever $\varphi\in C_b(\H_{-s})$.
\end{itemize}
\end{proposition}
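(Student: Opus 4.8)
The plan is to establish part (a) --- an $L^1(\Omega)$ continuous-dependence estimate --- and then deduce part (b) from it by a standard bounded-convergence argument. For part (a), fix $t>0$ and set $Y(r):=X(r,X_k)-X(r,X_0)$. Since the noise in~\eqref{eqn:infinite-mode-short} is additive, the $B\,dW$ terms cancel in the difference and $Y$ solves the pathwise (random-coefficient) integral equation
$$Y(r)=(X_k-X_0)+\int_0^r\big[LY(\rho)+F(X(\rho,X_k))-F(X(\rho,X_0))\big]\,d\rho,$$
with no stochastic integral. Here $\|F(X)-F(\wt X)\|_{-s}=|\Phi'(x)-\Phi'(\wt x)|/m$, and $\Phi'$ is only \emph{locally} Lipschitz; this is the one genuine obstacle in the argument.

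To handle it I would localize. Let $\sigma_N=\sigma_N(k)$ denote the first time that either $\|X(r,X_k)\|_{-s}$ or $\|X(r,X_0)\|_{-s}$ exceeds $N$. On $[0,t\wedge\sigma_N]$ the position coordinates satisfy $|x(r,X_k)|,|x(r,X_0)|\le N$, so writing $L_N:=\sup_{|y|\le N}|\Phi''(y)|<\infty$ (finite since $\Phi\in C^\infty$) gives $|\Phi'(x(r,X_k))-\Phi'(x(r,X_0))|\le L_N|x(r,X_k)-x(r,X_0)|\le L_N\|Y(r)\|_{-s}$. Combining this with the boundedness of $L$ on $\H_{-s}$ (Proposition~\ref{prop:L:bounded}, whose hypothesis is met under Assumption~\ref{cond:sol'n}) and Gronwall's inequality yields the pathwise bound
$$\sup_{0\le r\le t\wedge\sigma_N}\|Y(r)\|_{-s}\le\|X_k-X_0\|_{-s}\,e^{(\|L\|+L_N/m)t}.$$

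Now split $\E{\|Y(t)\|_{-s}}=\E{\|Y(t)\|_{-s}\mathbf{1}_{\{\sigma_N\ge t\}}}+\E{\|Y(t)\|_{-s}\mathbf{1}_{\{\sigma_N<t\}}}$. The first term is at most $\|X_k-X_0\|_{-s}\,e^{(\|L\|+L_N/m)t}$, which tends to $0$ as $k\to\infty$ for each fixed $N$. For the second term, since $X_k\to X_0$ all the initial data lie in some bounded set $B\subset\H_{-s}$, so~\eqref{ineq:strong-sol'n}--\eqref{ineq:strong-sol'n:sup} supply a constant $C=C(t,B)$ with $\E{\sup_{0\le r\le t}\|X(r,X_j)\|_{-s}^2}\le C$ for $j\in\{k,0\}$ and all $k$; in particular $\E{\|Y(t)\|_{-s}^2}\le 4C$ and, by Chebyshev, $\P{\sigma_N<t}\le 2C/N^2$. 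Cauchy--Schwarz then bounds the second term by $(\E{\|Y(t)\|_{-s}^2})^{1/2}\,\P{\sigma_N<t}^{1/2}\le C'/N$ uniformly in $k$. Hence $\limsup_{k\to\infty}\E{\|Y(t)\|_{-s}}\le C'/N$ for every $N$, which forces $\limsup_{k\to\infty}\E{\|Y(t)\|_{-s}}=0$, i.e.~\eqref{eqn:sol'n-1}.

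Part (b) follows easily. Boundedness of $\PMarkov(t)\varphi$ is immediate from $|\PMarkov(t)\varphi(X_0)|\le\|\varphi\|_\infty$. For continuity, if $X_k\to X_0$ in $\H_{-s}$ then part (a) gives $X(t,X_k)\to X(t,X_0)$ in $L^1(\Omega;\H_{-s})$, hence in probability; along any subsequence one may extract a further subsequence on which the convergence is almost sure, and then continuity and boundedness of $\varphi$ together with dominated convergence give $\PMarkov(t)\varphi(X_k)\to\PMarkov(t)\varphi(X_0)$ along that sub-subsequence. Since every subsequence admits a further subsequence converging to the same limit, the whole sequence converges, so $\PMarkov(t)\varphi\in C_b(\H_{-s})$. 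The only real difficulty is the one already flagged --- the absence of a global Lipschitz bound on $\Phi'$ --- and it is precisely the uniform-over-bounded-sets moment estimate~\eqref{ineq:strong-sol'n:sup} of Proposition~\ref{prop:sol'n} that lets the localization argument close.
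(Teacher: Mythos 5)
Your proof is correct and follows essentially the same strategy as the paper's: split $\Enone\|X(t,X_k)-X(t,X_0)\|_{-s}$ over a stopping time that localizes both solutions in a ball, control the bad event via Chebyshev using the uniform moment bound \eqref{ineq:strong-sol'n:sup}, obtain a Lipschitz continuous-dependence estimate on the good event, and then send $R$ (your $N$) to infinity after $k$; part (b) is word-for-word the paper's subsequence/dominated-convergence argument. The only difference is cosmetic: where the paper identifies the localized solutions with the cutoff system \eqref{eqn:infinite-mode-short-cut} and cites the continuous-dependence theorem for globally Lipschitz equations, you derive the same bound directly by a pathwise Gronwall estimate using $\sup_{|y|\le N}|\Phi''(y)|$, which is self-contained and equally valid since the additive noise cancels in the difference equation.
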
  
\begin{proof} (a) For notational simplicity, throughout this proof, we shall omit the subscript $-s$ in the norm $\|\cdot\|_{-s}$. Denote by $X_{(k)}(t)$ the solution of \eqref{eqn:GLE-Markov} with initial data $X_{(k)}(0)=X_k$. Fixing $R>0$ to be chosen later, define the stopping time 
\begin{equation*}
\tau_R^k = \inf_{t\geq 0}\{\|X_{(k)}(t)\|+\|X_{(0)}(t)\|\geq R\},
\end{equation*}
and observe that by Chebychev's inequality,
\begin{align*}
\P{\tau_R^k\leq t}&=\Pnone\Big\{\sup_{0\leq \ell\leq t}\|X_{(k)}(\ell)\|+\|X_{(0)}(\ell)\|\geq R\Big\}\\
&\leq \frac{\E{\sup_{0\leq \ell\leq t} \|X_{(k)}(\ell)\|+\|X_{(0)}(\ell) }}{R}\\
&\leq \frac{\sqrt{C(t,X_k)}+\sqrt{C(t,X_0)}}{R},
\end{align*}
where we used~\eqref{ineq:strong-sol'n} in the last inequality. It follows from~\eqref{ineq:strong-sol'n:sup} that 
\begin{equation}\label{ineq:sol'n-2a}
\sup_{k}\P{\tau_R^k\leq t}\leq \frac{C(t)}{R},
\end{equation}
for a finite constant $C(t)>0$ independent of $R$. Next, let $X_{(k)}^R$ and $X_{(0)}^R$ be the local solutions of \eqref{eqn:infinite-mode-short-cut} from Proposition \ref{prop:local-sol'n}. Since the drift term of \eqref{eqn:infinite-mode-short-cut} is Lipschitz, there exists a constant $c(R,t)>0$ such that (see Theorem 9.1, \cite{da2014stochastic}).
\begin{equation}\label{ineq:sol'n-2}
\Enone\big\|X_{(k)}^R(t)-X_{(0)}^R(t)\big\|\leq C(R,t)\left\|X_k-X_0\right\|.
\end{equation}
Now we have a chain of implications
\begin{equation*}
\begin{aligned}
\MoveEqLeft[1]\Enone\|X_{(k)}(t)-X_{(0)}(t)\|\\
&\leq \Enone \Big[\big(\|X_{(k)}(t)\|+\|X_{(0)}(t)\|\big)1_{\{\tau_R^k\leq t\}}\Big]+\E{\|X_{(k)}(t)-X_{(0)}(t)\|1_{\{\tau_R^k> t\}}}\\
&=  \E{\big(\|X_{(k)}(t)\|+\|X_{(0)}(t)\|\big)1_{\{\tau_R^k\leq t\}}}+\E{\|X_{(k)}^R(t)-X_{(0)}^R(t)\|1_{\{\tau_R^k> t\}}}\\
&\leq \Big(\E{\big(\|X_{(k)}(t)\|+\|X_{(0)}(t)\|\big)^2 }\Big)^{1/2}\Big(\P{\tau_R^k\leq t}\Big)^{1/2}+ \Enone\|X_{(k)}^R(t)-X_{(0)}^R(t)\|\\
&\leq \Big(\E{\big(\|X_{(k)}(t)\|+\|X_{(0)}(t)\|\big)^2 }\Big)^{1/2}\Big(\P{\tau_R^k\leq t}\Big)^{1/2}+ C(R,t)\left\|X_k-X_0\right\|\\
&\leq \frac{C_1(t)}{R^{1/2}}+C(R,t)\left\|X_k-X_0\right\|,
\end{aligned}
\end{equation*}
where note carefully that $C_1(t)$ is independent of $k$ and $R$ and $C(R,t)$ is independent of $k$. The above RHS now tends to zero by taking $R$ sufficiently large first and then letting $X_k$ sufficiently close to $X_0$. This establishes (a).

To prove (b), let $X_k\to X_0$ and $\varphi\in C_b\left(\H_{-s}\right)$, we have to show $\PMarkov(t)\varphi(X_k)\to\PMarkov(t)\varphi(X_0)$. It suffices to show that for every subsequence $\{k_i\}$, there exists a further subsequence $\{k_{i_j}\}$ such that $\PMarkov(t)\varphi(X_{k_{i_j}})\to\PMarkov(t)\varphi(X_0)$. In view of part (a), the sequence $X_{(k_i)}(t)$ converges to $X_{(0)}(t)$ in $L^1(\Omega;\Hs)$. We thus can extract a subsequence $X_{(k_{i_j})}(t)$ converging to $X_{(0)}(t)$ a.s. Since $\varphi$ is bounded, applying the Dominated Convergence Theorem yields
\begin{align*}
\Enone \big[\varphi(X_{(k_{i_j})}(t))\big]\to \E{\varphi(X_{(0)}(t))} \text{ as } j\rightarrow \infty, 
\end{align*}
which implies (b) and thus completes the proof.
\end{proof}

\section{Invariance of $\mu$} \label{sec:invariant-exist}
In this section, we show that $\mu$ defined in \eqref{eqn:mu} is invariant for the Markov semigroup $\left\{\PMarkov(t)\right\}_{t\geq 0}$.  We first sketch briefly the structure of the proof before diving into details. 

The goal is to show that for every $\varphi\in C_b(\H_{-s})$ and $t\geq 0$ we have
\begin{equation}\label{thm:density-1}
\int_{\mathcal{H}_{-s}}\!\!\!\!\PMarkov(t)\varphi(X)\mu(dX)=\int_{\mathcal{H}_{-s}}\!\!\!\!\varphi(X)\mu(dX).
\end{equation}
Let $C_b^2(\H_{-s})$ denote the space of real-valued functions on $\Hs$ that have bounded first and second Fr\'echet derivatives. Approximating $\varphi$ by functions in $C^2_b(\Hs)$ if necessary, it thus suffices to show that \eqref{thm:density-1} holds for any $\psi\in C_b^2(\H_{-s})$
 \begin{equation}\label{thm:density-1a}
\int_{\mathcal{H}_{-s}}\!\!\!\!\PMarkov(t)\psi(X)\mu(dX)=\int_{\mathcal{H}_{-s}}\!\!\!\!\psi(X)\mu(dX).
\end{equation}
In order to show~\eqref{thm:density-1a}, it is helpful to make use of the cutoff system~\eqref{eqn:infinite-mode-short-cut} and the semigroup $\PMarkov^R(t)$ where for $R>0$, $\PMarkov^R(t)$ is defined analogously to~\eqref{defn:Markov-semigroup} by replacing $X(t)$ with $X^R(t)$ solving~\eqref{eqn:infinite-mode-short-cut}. The advantage of using the cutoff systems is that, because they have globally Lipschitz coefficients, they immediately satisfy the Kolmogorov backward equation, cf. Theorem 9.23 of~\cite{da2014stochastic}.  This fact we will need later in the proof of Proposition~\ref{prop:density-cut}. Specifically, we will prove that $\mu$ is \emph{almost invariant} for the cutoff semigroup $\PMarkov^R(t)$; namely,
\begin{equation}\label{thm:density-2}
\int_{\mathcal{H}_{-s}}\!\!\!\!\PMarkov^R(t)\psi(X)\mu(dX)=\int_{\mathcal{H}_{-s}}\!\!\!\!\psi(X)\mu(dX)+\varepsilon^R(\psi,t),
\end{equation} 
where $\varepsilon^R(\psi,t)$ is a remainder term that (possibly) depends on $\psi$ and $t$, and satisfies $\varepsilon^R(\psi,t)\to 0$ as $R\to \infty$. We will see that this then implies the desired equality~\eqref{thm:density-1a}.

Before proving Theorem \ref{thm:density}, we collect several properties about Gaussian measures on $\rbb$ which follow simply by using integration by parts. Let $\mu_v,\, \nu$ be as in~\eqref{defn:mu-x}. Then, for every  $\varrho_1\in C^2_b(\rbb)$, it holds that
\begin{equation}\label{eqn:Gaussian-0}
\int_\rbb -y\varrho_1'(y)+\frac{1}{m}\varrho_1''(y)\, \mu_v(dy)=0,
\end{equation}
and
\begin{equation}\label{eqn:Gaussian-1}
\int_\rbb -y\varrho_1'(y)+\varrho_1''(y)\, \nu(dy)=0.
\end{equation}
Also, for every $\varrho_2\in C^1_b(\rbb^2)$, we have
\begin{equation}\label{eqn:Gaussian-3}
\int_{\rbb^2}\Big( \frac{1}{m}z\partial_y\varrho_2(y,z)-y\partial_z\varrho_2(y,z)\Big)\left(\mu_v\times\nu\right)(dy,dz)=0.
\end{equation}

With these observations, we have the following result:
\begin{lemma} \label{lem:gen-cut} Given $R>0$, let $\L^R$ be the infinitesimal generator of the Markov semigroup $\{\PMarkov^R(t)\}_{t\geq 0}$ associated with $X^R$ solving~\eqref{eqn:infinite-mode-short-cut} and let $\mu$ be as in~\eqref{eqn:mu}. Then, for every $\psi\in C^2_b(\H_{-s})$, we have the following equality 
\begin{equation} \label{eqn:gen-cut-1}
\int_{\mathcal{H}_{-s}}\!\!\!\! \L^R\psi(X)\mu(dX) = \int_{\mathcal{H}_{-s}}\!\! \!\!v\Phi'(x)\left(1-\theta^R(x)\right)\psi(X)\mu(dX).
\end{equation}
\end{lemma}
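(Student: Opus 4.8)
The plan is to substitute the explicit coordinate form of $\L^R$ into the left-hand side of \eqref{eqn:gen-cut-1} and integrate term by term against the product measure $\mu=\mu_x\times\mu_v\times\prod_{k\ge1}\nu$, reducing the computation to the scalar Gaussian integration-by-parts identities \eqref{eqn:Gaussian-0}--\eqref{eqn:Gaussian-3} together with two more integrations by parts. Since the cutoff in \eqref{eqn:infinite-mode-short-cut} acts only on the potential term, \eqref{defn:Markov-gen} gives, for $\psi\in C^2_b(\H_{-s})$,
\begin{multline*}
\L^R\psi(X)=v\,\partial_x\psi+\Big(-\tfrac{\gamma}{m}v-\tfrac{1}{m}\Phi'(x)\theta_R(x)-\tfrac{1}{m}\sum_{k\ge1}\sqrt{c_k}\,z_k\Big)\partial_v\psi\\
+\sum_{k\ge1}\big(-\lambda_k z_k+\sqrt{c_k}\,v\big)\partial_{z_k}\psi+\tfrac{\gamma}{m^2}\partial^2_v\psi+\sum_{k\ge1}\lambda_k\partial^2_{z_k}\psi .
\end{multline*}
I would split the right-hand side into: the transport term $v\,\partial_x\psi$; the $v$--Ornstein--Uhlenbeck pair $\tfrac{\gamma}{m}\big(-v\,\partial_v\psi+\tfrac1m\partial^2_v\psi\big)$; for each $k$, the $z_k$--Ornstein--Uhlenbeck pair $\lambda_k\big(-z_k\,\partial_{z_k}\psi+\partial^2_{z_k}\psi\big)$; for each $k$, the Hamiltonian cross pair $\sqrt{c_k}\big(-\tfrac1m z_k\,\partial_v\psi+v\,\partial_{z_k}\psi\big)$; and the cutoff remainder $-\tfrac1m\Phi'(x)\theta_R(x)\,\partial_v\psi$.

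Each of the Ornstein--Uhlenbeck and cross groups integrates to zero: integrating the $v$-OU pair first in $v$ against $\mu_v$ kills it by \eqref{eqn:Gaussian-0}; integrating the $k$-th $z_k$-OU pair first in $z_k$ against $\nu$ kills it by \eqref{eqn:Gaussian-1}; and integrating the $k$-th cross pair first in $(v,z_k)$ against $\mu_v\times\nu$ kills it by \eqref{eqn:Gaussian-3} (with $y=v$, $z=z_k$, $\varrho_2=\psi$, up to an overall sign); here one uses that the restriction of $\psi\in C^2_b(\H_{-s})$ to any finite-dimensional coordinate slice lies in $C^2_b$, so these scalar identities apply. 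For the transport term I integrate first in $x$ against $\mu_x(dy)\propto e^{-\Phi(y)}\,dy$ and integrate by parts; the boundary terms vanish because $\psi$ is bounded and $\Phi(x)\to\infty$ (Assumption~\ref{cond:Phi}), so this term contributes $\int_{\H_{-s}}v\,\Phi'(x)\psi\,d\mu$, which is finite precisely because $\int_\rbb|\Phi'|e^{-\Phi}\,dx<\infty$. For the cutoff remainder I integrate first in $v$ against $\mu_v$ and integrate by parts, using $\int_\rbb\partial_v\psi\,\mu_v(dv)=m\int_\rbb v\,\psi\,\mu_v(dv)$, which turns it into $-\int_{\H_{-s}}v\,\Phi'(x)\theta_R(x)\psi\,d\mu$. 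Adding the two surviving contributions yields exactly the right-hand side of \eqref{eqn:gen-cut-1}.

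The genuine difficulty — and the step I expect to be the main obstacle — is making this term-by-term argument rigorous in infinite dimensions: one must check that $\L^R\psi\in L^1(\mu)$ and that the sums over $k$ in the $z_k$-OU and cross groups may legitimately be pulled outside the integral. The key is to extract from $\psi\in C^2_b(\H_{-s})$, via \eqref{eqn:Frechet:first}--\eqref{eqn:Frechet:second} and the orthonormality of $\{e_x,e_v,e_1,e_2,\dots\}$ in $\H_{-s}$, the bounds $|\partial_x\psi|,|\partial_v\psi|\le M$, $|\partial^2_{z_k}\psi|\le M_2 k^{-2s}$ and, crucially, the square-summability $\sum_{k\ge1}k^{2s}(\partial_{z_k}\psi)^2\le M^2$, where $M=\sup_X\|D\psi(X)\|_{-s}$ and $M_2=\sup_X\|D^2\psi(X)\|_{L(\H_{-s},\H_{-s})}$. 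The last bound, via Cauchy--Schwarz in $k$, gives
\[
\sum_{k\ge1}\lambda_k|z_k|\,|\partial_{z_k}\psi|\le M\Big(\sum_{k\ge1}\lambda_k^2 k^{-2s}z_k^2\Big)^{1/2},\qquad \sum_{k\ge1}\sqrt{c_k}\,|v|\,|\partial_{z_k}\psi|\le M|v|\Big(\sum_{k\ge1}c_k k^{-2s}\Big)^{1/2},
\]
with $\sum_k\lambda_k^2 k^{-2s}=\sum_k k^{-2\beta-2s}$ and $\sum_k c_k k^{-2s}=\sum_k k^{-1-\alpha\beta-2s}$ convergent since Assumption~\ref{cond:sol'n} forces $\alpha\beta>2s>1$; similarly $\sum_k\lambda_k|\partial^2_{z_k}\psi|\le M_2\sum_k k^{-\beta-2s}<\infty$ and $\sum_k\sqrt{c_k}\,|z_k|\,|\partial_v\psi|\le M\sum_k k^{-(1+\alpha\beta)/2}|z_k|$ with $\sum_k k^{-(1+\alpha\beta)/2}<\infty$. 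Since $v$ and the $z_k$ are Gaussian under $\mu$ (so all the displayed right-hand sides have finite $\mu$-integral) and $\int_\rbb|\Phi'|e^{-\Phi}\,dx<\infty$, Tonelli's theorem yields $\sum_k\int_{\H_{-s}}|\,\cdot\,|\,d\mu<\infty$ for the $k$-indexed summands; hence $\L^R\psi\in L^1(\mu)$ and, by Fubini, the term-by-term integration is justified. Combined with the computations of the previous paragraph this completes the proof.
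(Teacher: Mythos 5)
Your proposal is correct and follows essentially the same route as the paper: decompose $\L^R\psi$ into the transport/cutoff, Ornstein--Uhlenbeck, and cross pairs, justify absolute convergence and Fubini via the Fréchet-derivative bounds $\sum_k k^{2s}(\partial_{z_k}\psi)^2\le\|D\psi\|_\infty^2$ and $|\partial^2_{z_k}\psi|\le k^{-2s}\|D^2\psi\|_\infty$ together with Cauchy--Schwarz, kill the OU and cross blocks with \eqref{eqn:Gaussian-0}--\eqref{eqn:Gaussian-3}, and integrate the remaining terms by parts to produce $v\Phi'(x)(1-\theta^R(x))\psi$. The only cosmetic difference is that the paper integrates the transport term and the cutoff term by parts jointly against $\mu_x\times\mu_v$ as a single block, whereas you handle them separately and add the results.
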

\begin{proof} Similar to \eqref{defn:Markov-gen}, $\L^R\psi$ is given by
\begin{multline} \label{eqn:density-7}
\L^R\psi(X) = v\frac{\partial\psi(X)}{\partial x}+\Big(\!\!-\frac{\gamma}{m}v-\frac{1}{m}\Phi'(x)\theta^R(x)-\frac{1}{m}\sum_{k\geq 1}\sqrt{c_k}z_k\Big)\frac{\partial\psi(X)}{\partial v}\\
+\sum_{k\geq 1} \left(-\lambda_kz_k+\sqrt{c_k}v\right)\frac{\partial\psi(X)}{\partial z_k}+ \frac{\gamma}{m^2}\frac{\partial ^2\psi(X)}{\partial v^2}+\sum_{k\geq 1}\lambda_k\frac{\partial ^2\psi(X)}{\partial z_k^2}.
\end{multline}
We integrate both sides against $\mu$ in $\H_{-s}$   and rearrange the above RHS appropriately to obtain
\begin{equation} \label{eqn:gen-cut-7a}
\begin{aligned}
\int_{\mathcal{H}_{-s}} \L^R\psi(X)\mu(dx)
&= \int_{\mathcal{H}_{-s}} \left[v\frac{\partial \psi(X)}{\partial x}-\frac{1}{m}\Phi'(x)\theta^R(x)\frac{\partial \psi(X)}{\partial v}\right]\mu(dX)\\
&\qquad+\int_{\mathcal{H}_{-s}} \left[-\frac{\gamma}{m}v\frac{\partial \psi(X)}{\partial v}+\frac{\gamma}{m^2}\frac{\partial ^2\psi(X)}{\partial v^2}\right]\mu(dX)\\
&\qquad+\sum_{k\geq 1}\int_{\mathcal{H}_{-s}} \left[-\frac{\sqrt{c_k}}{m}z_k\frac{\partial \psi(X)}{\partial v}+\sqrt{c_k}v\frac{\partial \psi(X)}{\partial z_k}\right]\mu(dX)\\
&\qquad+\sum_{k\geq 1}\int_{\mathcal{H}_{-s}} \left[-\lambda_kz_k\frac{\partial \psi(X)}{\partial z_k}+\lambda_k\frac{\partial ^2\psi(X)}{\partial z_k^2}\right]\mu(dX)\\
&=I_{0,1}+I_{0,2}+\sum_{k\geq 1}I_{k,1}+\sum_{k\geq 1}I_{k,2}.
\end{aligned}
\end{equation}
At this point~\eqref{eqn:gen-cut-7a} is still formal. We need to show that $\L^R\psi\in L^1(\Hs,\mu)$ and that the above rearrangement is possible. To this end, we claim that the RHS after the first equality of \eqref{eqn:gen-cut-7a} is absolutely convergent. Since $\psi\in C^2_b(\H_{-s})$, \eqref{eqn:Frechet:first} and Parseval's identity  imply a bound on first partial derivatives
\begin{equation}\label{ineq:psi-bound-1st}
\begin{aligned}
\MoveEqLeft[5] \left|\frac{\partial \psi(X)}{\partial x}\right|^2+\left|\frac{\partial \psi(X)}{\partial v}\right|^2+\sum_{k\geq 1}k^{2s}\left|\frac{\partial \psi(X)}{\partial z_k}\right|^2 \\
&=\la D\psi(X),e_x\ra ^2+\la D\psi(X),e_v\ra^2+\sum_{k\geq 1}\la D\psi(X),e_k\ra^2\\
&=\| D\psi(X)\|_{-s}^2\\
&\leq\| D\psi\|_\infty^2,
\end{aligned}
\end{equation} 
where for $ D\psi:\Hs\to\Hs$, $\| D\psi\|_\infty=\sup_{Y\in\Hs}\| D\psi(Y)\|_{-s}$. Similarly, \eqref{eqn:Frechet:second} implies bounds on second partial derivatives,
\begin{equation}\label{ineq:psi-bound-second-xv}
\frac{\partial^2\psi\left(X \right)}{\partial x^2}=\la D^2\psi(X)(e_x),e_x\ra\leq \| D^2\psi\|_{\infty},\qquad \frac{\partial^2\psi\left(X \right)}{\partial v^2}\leq \| D^2\psi\|_{\infty},
\end{equation}
and
\begin{equation}\label{ineq:psi-bound-second-z}
k^{2s}\frac{\partial^2\psi\left(X \right)}{\partial z_k^2}\leq \| D^2\psi\|_{\infty},
\end{equation}
where for $ D^2\psi(X):\Hs\to L(\Hs,\Hs)$, 
\begin{displaymath}
\| D^2\psi\|_{\infty}=\sup_{\Hs}\| D^2\psi(Y)\|_{L(\Hs,\Hs)}.
\end{displaymath} 
In the RHS after the first equality of \eqref{eqn:gen-cut-7a}, the first four terms are bounded by, using \eqref{ineq:psi-bound-1st}, \eqref{ineq:psi-bound-second-xv},
\begin{multline}\label{ineq:gen-cut-8a}
\int_{\mathcal{H}_{-s}} \left| v\frac{\partial \psi(X)}{\partial x}\right|+\frac{1}{m}\left|\Phi'(x)\theta^R(x)\frac{\partial \psi(X)}{\partial v}\right|\\+ \frac{\gamma}{m}\left|v\frac{\partial \psi(X)}{\partial v}\right|+\frac{\gamma}{m^2}\left|\frac{\partial ^2\psi(X)}{\partial v^2}\right|\mu(dX)
\end{multline}
\begin{displaymath}
\leq\| D\psi\|_\infty\Big[\left(1+\frac{\gamma}{m}\right)\int_\rbb |v|\mu_v(dv)+\frac{1}{m}\int_\rbb \left|\Phi'(x)\right|\theta_R(x)e^{-\Phi(x)}dx\Big]+\frac{\gamma}{m^2}\| D^2\psi\|_\infty,
\end{displaymath}
which is finite, by the definition of $\mu_v$ from~\eqref{defn:mu-x} and the fact that $\theta_r$  as in~\eqref{defn:theta-R} has compact support. For the first sum on the third line of \eqref{eqn:gen-cut-7a}, we estimate as follows.
\begin{equation}\label{ineq:gen-cut-8b}
\begin{aligned}
\MoveEqLeft[3]\sum_{k\geq 1}\int_{\mathcal{H}_{-s}} \left|\frac{\sqrt{c_k}}{m}z_k\frac{\partial \psi(X)}{\partial v}\right|\mu(dX)\\
&\leq \frac{\| D\psi\|_\infty}{m}\Big(\sum_{k\geq 1}c_kk^{2s}\Big)^{1/2}\int_{\mathcal{H}_{-s}}\Big(\sum_{k\geq 1}k^{-2s}z_k^2\Big)^{1/2}\mu(dX)\\
&\leq \frac{\| D\psi\|_\infty}{m}\Big(\sum_{k\geq 1}c_kk^{2s}\Big)^{1/2}\int_{\mathcal{H}_{-s}}\!\!\!\!\|X\|_{-s}\mu(dX)<\infty,
\end{aligned}
\end{equation}
since by Assumption \ref{cond:sol'n}, $\sum_{k\geq 1}c_kk^{2s}$ is finite and so is $\int_{\mathcal{H}_{-s}}\|X\|_{-s}\mu(dX)$, by the definition of $\mu$, see~\eqref{ineq:mu:second-moment}. Similarly, for the second sum on the third line of \eqref{eqn:gen-cut-7a}, using \eqref{ineq:psi-bound-1st} again, we infer
\begin{equation}\label{ineq:gen-cut-8c}
\begin{aligned}
\MoveEqLeft[3]
\sum_{k\geq 1}\int_{\mathcal{H}_{-s}}\left|\sqrt{c_k}v\frac{\partial \psi(X)}{\partial z_k}\right|\mu(dX)\\
&\leq \int_{\mathcal{H}_{-s}} \Big(\sum_{k\geq 1}\frac{c_k}{k^{2s}}\Big)^{1/2}\Big(\sum_{k\geq 1}k^{2s}\Big|\frac{\partial \psi(X)}{\partial z_k}\Big|^2\Big)^{1/2}|v|\mu(dX)\\
&\leq  \| D\psi\|_\infty\Big(\sum_{k\geq 1}\frac{c_k}{k^{2s}}\Big)^{1/2}\int_{\mathcal{H}_{-s}}\!\!\!\!|v|\mu(dX)<\infty.
\end{aligned}
\end{equation}
For the first sum on the fourth line of \eqref{eqn:gen-cut-7a}, similar to \eqref{ineq:gen-cut-8c}, we invoke \eqref{ineq:psi-bound-1st} again to see that
\begin{equation} \label{ineq:gen-cut-8d}
\sum_{k\geq 1}\int_{\mathcal{H}_{-s}} \left|\lambda_kz_k\frac{\partial \psi(X)}{\partial z_k}\right|\mu(dX)\leq \| D\psi\|_\infty\int_{\mathcal{H}_{-s}}\!\!\!\!\|X\|_{-s}\mu(dX)<\infty.
\end{equation}
Lastly, we employ \eqref{ineq:psi-bound-second-z} to estimate the latter sum on the fourth line of \eqref{eqn:gen-cut-7a},
\begin{equation}\label{ineq:gen-cut-8e}
\begin{aligned}
\sum_{k\geq 1}\int_{\mathcal{H}_{-s}} \Big|\lambda_k\frac{\partial ^2\psi(X)}{\partial z_k^2}\Big|\mu(dX) &= \int_{\mathcal{H}_{-s}} \Big|\sum_{k\geq 1}\lambda_kk^{-2s}k^{2s}\frac{\partial ^2\psi(X)}{\partial z_k^2}\Big|\mu(dX)\\
&\leq \| D^2\psi\|_\infty \int_{\mathcal{H}_{-s}} \sum_{k\geq 1}\lambda_kk^{-2s}\mu(dX)<\infty.
\end{aligned}
\end{equation}
We can now apply Fubini Theorem on the Hilbert space $\H_{-s}$, see e.g. \cite{kuo2011integration}. For $X\in\H_{-s}$, we write  
\begin{equation*}
X=P_{x,v}X+P_{x,v}^\perp X=xe_x+ve_v+Z,
\end{equation*} 
where $P_{x,v}X=xe_x+ve_v$ is the projection on the subspace $\la\{e_x,e_v\}\ra$ and $P_{x,v}^\perp X=Z$ is the projection on $\la \{e_x,e_v\}\ra^\perp$. Then, $\mu$ can be decomposed as $\mu=\mu_{x,v}\times\mu_{x,v}^\perp$, where $\mu_{x,v}=\mu_x\times\mu_v$ is a measure on $P_{x,v}\H_{-s}$ and $\mu_{x,v}^\perp=\prod_{k\geq 1}\nu$ is a measure on $P_{x,v}^\perp\H_{-s}$. It follows that
\begin{equation}\label{eqn:density-8}
\begin{aligned}
\MoveEqLeft[3]
\int_{\mathcal{H}_{-s}}\!\!\!\!I_{0,1}(X)\mu(dX)\\
& = \int_{\mathcal{H}_{-s}}\!\!\!\!v\frac{\partial\psi(X)}{\partial x}-\frac{1}{m}\Phi'(x)\theta^R(x)\frac{\partial\psi(X)}{\partial v}\mu(dX)\\
&= \int_{P_{x,v}^\perp\H_{-s}}\int_{\rbb^2}v\frac{\partial\psi(X)}{\partial x}-\frac{1}{m}\Phi'(x)\theta^R(x)\frac{\partial\psi(X)}{\partial v}\mu_{x,v}(dx,dv)\mu_{ x,v}^\perp(dZ),
\end{aligned}
\end{equation}
where we use Fubini's Theorem in the last implication. This is possible since we already established the absolute convergence in \eqref{ineq:gen-cut-8a}. Integrating by parts the first integral against $\mu_{x,v}$ yields
\begin{equation}\label{eqn:density-8a}
\begin{aligned}
\int_{\mathcal{H}_{-s}}\!\!\!\!I_{0,1}(X)\mu(dX)
& =  \int_{P_{x,v}^\perp\H_{-s}}\int_{\rbb^2}v\Phi'(x)\left(1-\theta^R(x)\right)\psi(X)\mu_{x,v}(dx,dv)\mu_{ x,v}^\perp(dX)\\
&=\int_{\mathcal{H}_{-s}}\!\!\!\!v\Phi'(x)\left(1-\theta^R(x)\right)\psi(X)\mu(dX).
\end{aligned}
\end{equation}
Similarly for $I_{0,2}$, we have
\begin{equation}\label{eqn:density-8b}
\begin{aligned}
\MoveEqLeft[3]
\int_{\mathcal{H}_{-s}}\!\!\!\!I_{0,2}(X)\mu(dX)\\
& = \int_{\mathcal{H}_{-s}} \Big[-\frac{\gamma}{m}v\frac{\partial \psi(X)}{\partial v}+\frac{\gamma}{m^2}\frac{\partial ^2\psi(X)}{\partial v^2}\Big]\mu(dX)\\
&= \int_{P_{v}^\perp\H_{-s}}\int_{\rbb} \Big[-\frac{\gamma}{m}v\frac{\partial \psi(X)}{\partial v}+\frac{\gamma}{m^2}\frac{\partial ^2\psi(X)}{\partial v^2}\Big]\mu_{v}(dv)\mu_{v}^\perp(d(xe_x+Z))\\
&=0,
\end{aligned}
\end{equation}
where we have employed~\eqref{eqn:Gaussian-0} in the last implication. For the last two terms $I_{k,1}$, $I_{k,2}$, after integration by parts, we invoke \eqref{eqn:Gaussian-3}, \eqref{eqn:Gaussian-1} respectively to obtain
\begin{equation}\label{eqn:density-8c}
\int_{\mathcal{H}_{-s}}\!\!\!\!I_{k,1}(X)\mu(dX)=0,\qquad\int_{\mathcal{H}_{-s}}\!\!\!\!I_{k,2}(X)\mu(dX)=0,\ k\geq 1.
\end{equation}
Formula \eqref{eqn:gen-cut-1} now follows immediately from \eqref{eqn:gen-cut-7a}, \eqref{eqn:density-8a}, \eqref{eqn:density-8b} and \eqref{eqn:density-8c}, thus completing the proof.
\end{proof}
We now show that $\mu$ is \emph{almost invariant} under the cutoff system \eqref{eqn:infinite-mode-short-cut}. 
\begin{proposition} \label{prop:density-cut} Let $R>0$.  For every $\psi\in C^2_b(\H_{-s})$, there exists $\epsilon^R(\psi,t)>0$ such that
\begin{equation}\label{eqn:density-cut-1}
\int_{\mathcal{H}_{-s}}\!\!\!\!\PMarkov^R(t)\psi(X)\mu(dX)=\int_{\mathcal{H}_{-s}}\!\!\!\!\psi(X)\mu(dX)+\varepsilon^R(\psi,t).
\end{equation}
Furthermore, $\varepsilon^R(\psi,t)\to 0$ as $R\to\infty$.
\end{proposition}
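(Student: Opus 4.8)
The plan is to differentiate $t\mapsto\int_{\H_{-s}}\PMarkov^R(t)\psi\,d\mu$ in time, identify its derivative via Lemma~\ref{lem:gen-cut} applied to $\PMarkov^R(r)\psi$, and then integrate back up. Because the drift $X\mapsto LX+F(X)\theta^R(x)$ of the cutoff equation~\eqref{eqn:infinite-mode-short-cut} is globally Lipschitz and smooth with all Fr\'echet derivatives bounded on $\H_{-s}$ (here $L$ is bounded linear by Proposition~\ref{prop:L:bounded}, $\Phi\in C^\infty$, and $\theta^R$ is smooth with compact support, so $x\mapsto-\Phi'(x)\theta^R(x)/m$ is $C^\infty$ with all derivatives bounded), the standard regularity theory for semilinear SDEs with Lipschitz coefficients (cf.\ Theorem~9.23 and the surrounding results of~\cite{da2014stochastic}) yields, for any $\psi\in C^2_b(\H_{-s})$, that $\PMarkov^R(r)\psi\in C^2_b(\H_{-s})$ for every $r\ge 0$, that $\sup_{0\le r\le t}\big(\|D\PMarkov^R(r)\psi\|_\infty+\|D^2\PMarkov^R(r)\psi\|_\infty\big)<\infty$ for every $t$, and that the backward Kolmogorov identity
\[
\PMarkov^R(t)\psi(X)-\psi(X)=\int_0^t\L^R\big(\PMarkov^R(r)\psi\big)(X)\,dr
\]
holds for every $X\in\H_{-s}$ and $t\ge0$.

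Next I would integrate this identity against $\mu$ over $\H_{-s}$. Running the estimates~\eqref{ineq:gen-cut-8a}--\eqref{ineq:gen-cut-8e} from the proof of Lemma~\ref{lem:gen-cut} with $\psi$ replaced by $\PMarkov^R(r)\psi$, and using the uniform-in-$r$ derivative bounds above together with~\eqref{ineq:mu:second-moment} and the convergence of $\sum_k c_k k^{2s}$ and $\sum_k \lambda_k k^{-2s}$, one gets that $(r,X)\mapsto\L^R(\PMarkov^R(r)\psi)(X)$ belongs to $L^1\big([0,t]\times\H_{-s},\,dr\otimes\mu\big)$. Fubini's theorem on $\H_{-s}$ then gives
\[
\int_{\H_{-s}}\PMarkov^R(t)\psi\,d\mu-\int_{\H_{-s}}\psi\,d\mu=\int_0^t\!\!\int_{\H_{-s}}\L^R\big(\PMarkov^R(r)\psi\big)(X)\,\mu(dX)\,dr,
\]
and applying Lemma~\ref{lem:gen-cut} to $\PMarkov^R(r)\psi\in C^2_b(\H_{-s})$ for each fixed $r$ identifies the remainder as
\[
\varepsilon^R(\psi,t)=\int_0^t\!\!\int_{\H_{-s}}v\,\Phi'(x)\big(1-\theta^R(x)\big)\,\PMarkov^R(r)\psi(X)\,\mu(dX)\,dr.
\]

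It then remains to show $\varepsilon^R(\psi,t)\to0$ as $R\to\infty$. Since $|\PMarkov^R(r)\psi|\le\|\psi\|_\infty$ and $1-\theta^R$ is supported on $\{|x|\ge R\}$ with $0\le 1-\theta^R\le1$, the product structure $\mu=\mu_x\times\mu_v\times\prod_{k\ge1}\nu$ yields
\[
|\varepsilon^R(\psi,t)|\le t\,\|\psi\|_\infty\Big(\int_\rbb|v|\,\mu_v(dv)\Big)\Big(\int_{\{|x|\ge R\}}|\Phi'(x)|\,\mu_x(dx)\Big).
\]
The first factor is a finite Gaussian moment, and the second tends to $0$ as $R\to\infty$ since $|\Phi'|\in L^1(\mu_x)$ by Assumption~\ref{cond:Phi}; hence $\varepsilon^R(\psi,t)\to0$, as claimed.

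The main obstacle is the input invoked in the first paragraph: one really needs $\PMarkov^R(r)\psi$ to be genuinely $C^2_b(\H_{-s})$ with derivative norms bounded locally uniformly in $r$, so that Lemma~\ref{lem:gen-cut} and its integrability bookkeeping apply verbatim to it and the Fubini exchange above is legitimate. Verifying the hypotheses of the relevant infinite-dimensional regularity theorem --- in particular that the cutoff nonlinearity $X\mapsto F(X)\theta^R(x)$ has bounded derivatives on $\H_{-s}$ and that the backward equation holds in the strong pointwise sense used above --- is the only step that is not a direct consequence of Lemma~\ref{lem:gen-cut}; everything downstream is routine once the representation of $\varepsilon^R(\psi,t)$ is in hand.
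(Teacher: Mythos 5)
Your proposal is correct and follows essentially the same route as the paper: invoke the backward Kolmogorov equation for the cutoff semigroup (Theorem 9.23 of \cite{da2014stochastic}, with Theorems 9.8--9.9 supplying the locally-uniform-in-$r$ bounds on $\|D\PMarkov^R(r)\psi\|_\infty$ and $\|D^2\PMarkov^R(r)\psi\|_\infty$ that justify Fubini), integrate against $\mu$, and identify $\varepsilon^R(\psi,t)$ via Lemma~\ref{lem:gen-cut}. The only difference is in the last step, where you bound $|\varepsilon^R(\psi,t)|$ by the tail integral $\int_{\{|x|\ge R\}}|\Phi'|\,d\mu_x$ using the support of $1-\theta^R$, which is a slightly more direct (and equally valid) argument than the paper's appeal to the a.s.\ convergence $X^R(r)\to X(r)$ together with dominated convergence.
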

\begin{proof} Since equation~\eqref{eqn:infinite-mode-short-cut} has a globally Lipschitz drift term, in view of Theorem 9.23 from \cite{da2014stochastic}, for every $\psi\in C^2_b(\H_{-s})$, $\PMarkov^R(t)\psi\in  C_b^2(\H_{-s})$ satisfies the Kolmogorov backward equation, namely
\begin{equation}\label{eqn:density-0}
\PMarkov^R(t)\psi(X)=\psi(X)+\int_0^t\L^R\PMarkov^R(r)\psi(X)dr.
\end{equation}
Integrating both sides on $\H_{-s}$ with respect to $\mu$ gives
\begin{equation}\label{eqn:density-1}
\int_{\mathcal{H}_{-s}}\!\!\!\!\PMarkov^R(t)\psi(X)\mu(dX)=\int_{\mathcal{H}_{-s}}\!\!\!\!\psi(X)\mu(dX)+\int_0^t\int_{\mathcal{H}_{-s}}\!\!\!\!\L^R\PMarkov^R(r)\psi(X)\mu(dX)dr.
\end{equation}
We note that Fubini's theorem was applied to switch the order of integration in the double-integral term above. Indeed, from \eqref{ineq:gen-cut-8a}-\eqref{ineq:gen-cut-8d}, we see that for all $r\in[0,t]$
\begin{equation*}
\int_{\mathcal{H}_{-s}}\!\!\!\!\left|\L^R\PMarkov^R(r)\psi(X)\right|\mu(dX)\leq c \left(\| D\PMarkov^R(r)\psi\|_\infty+\| D^2\PMarkov^R(r)\psi\|_\infty\right),
\end{equation*}
where $c>0$ is a constant independent of $R>0$. Furthermore, in view of Theorem 9.8 and Theorem 9.9 from \cite{da2014stochastic}, $\sup_{0\leq r\leq t}\|  D\PMarkov^R(r)\psi\|_\infty$ and $\sup_{0\leq r\leq t}\|  D^2\PMarkov^R(r)\psi\|_\infty$ are both finite. We thus infer that $\int_0^t\int_{\mathcal{H}_{-s}}\left|\L^R\PMarkov^R(r)\psi(X)\right|\mu(dX)dr<\infty$, which guarantees that the Fubini's Theorem is applicable. Now, since $\PMarkov(t)\psi\in C^2_b(\H_{-s})$ for all $t\geq 0$, Lemma \ref{lem:gen-cut} implies that
\begin{multline}\label{eqn:density-1a}
\int_{\mathcal{H}_{-s}}\!\!\!\!\PMarkov^R(t)\psi(X)\mu(dX)=\int_{\mathcal{H}_{-s}}\!\!\!\!\psi(X)\mu(dX)\\
+\int_0^t\int_{\mathcal{H}_{-s}}\!\!\!\!v\Phi'(x)\left(1-\theta^R(x)\right)\PMarkov^R(r)\psi(X)\mu(dX)dr.
\end{multline}
Let $\varepsilon^R(\psi,t)$ be given by
\begin{equation} \label{eqn:density-1b}
\varepsilon^R(\psi,t) := \int_0^t\int_{\mathcal{H}_{-s}}\!\!\!\!v\Phi'(x)\left(1-\theta^R(x)\right)\PMarkov^R(r)\psi(X)\mu(dX)dr.
\end{equation}
It is clear that the integrand on the above RHS is dominated by $\|\psi\|_{\infty}\left|v\Phi'(x)\right|$ and that \begin{equation*}
\|\psi\|_\infty\int_0^t\int_{\mathcal{H}_{-s}}\!\!\!\!|v\Phi'(x)|\mu(dX)dr=t\|\psi\|_\infty\int_\rbb|v|\mu_v(dv)\int_\rbb\left|\Phi'(x)\right|\mu_x(dx)<\infty,
\end{equation*}
since $\mu_v$ is Gaussian and by Assumption~\ref{cond:Phi}, $\Phi'(x)e^{-\Phi(x)}$ is integrable. We additionally note that by the construction of local solutions, $X^R(r)\to X(r)$ as $R\to\infty$ a.s. It follows that $\PMarkov^R(r)\psi(X)\to\PMarkov(r)\psi(X)$, implying $v\Phi'(x)\left(1-\theta^R(x)\right)\PMarkov^R(r)\psi(X)\to 0$, since $\theta^R(x)\to 1$. We therefore apply the Dominated Convergence Theorem to infer that $\varepsilon^R(\psi,t) \to 0$, which completes the proof.
\end{proof}

With Proposition \ref{prop:density-cut} in hand, we are ready to give the proof of Theorem~\ref{thm:density}.
\begin{proof}[Proof of Theorem \ref{thm:density}.]
By taking $R\to\infty$ on both sides of \eqref{eqn:density-cut-1}, we obtain for all $\psi\in C^2_b(\H_{-s})$
\begin{equation*}\label{eqn:density-9}
\int_{\mathcal{H}_{-s}}\!\!\!\!\PMarkov(t)\psi(X)\mu(dX)=\int_{\mathcal{H}_{-s}}\!\!\!\!\psi(X)\mu(dX).
\end{equation*}
For $\varphi\in C_b(\H_{-s})$, approximating $\varphi$ by a sequence $\left\{\psi_k\right\}\subset C^2_b(\H_{-s})$, we apply the Dominated Convergence Theorem to arrive at
\begin{equation*}\label{eqn:density-9a}
\int_{\mathcal{H}_{-s}}\!\!\!\!\PMarkov(t)\varphi(X)\mu(dX)=\int_{\mathcal{H}_{-s}}\!\!\!\!\varphi(X)\mu(dX).
\end{equation*}
The proof is complete.
\end{proof}

\section{Uniqueness of the invariant measure in the diffusive regime} \label{sec:invariant-unique} 
In order to prove uniqueness of $\mu$, we will construct an {\it asymptotic coupling} using an appropriate Girsanov shift argument, following and applying the methods and ideas developed in~\cite{weinan2001gibbsian,mattingly2002exponential,hairer2011asymptotic}.  Intuitively, this means that solutions started from different initial data have a positive probability of converging to one another as $t\rightarrow \infty$.  Theorem 1.1 of~\cite{hairer2011asymptotic} will then allow us to conclude there is only one ergodic invariant measure, thus uniqueness of $\mu$ follows by ergodic decomposition. The idea of using Girsanov shift to construct \emph{asymptotic coupling} first appeared in the work of~\cite{weinan2001gibbsian} and was later developed in~\cite{hairer2011asymptotic}. For some more recent applications of this theory to SPDEs, we refer the reader to~\cite{glatt2017unique,kulik2015generalized}. 

For the reader's convenience, we briefly explain the framework of the \emph{asymptotic coupling method} adapted to our setting, following \cite{hairer2011asymptotic,glatt2017unique}. To begin, we denote by $\Hs^\nbb$ the pathspace over $\Hs$, 
\begin{displaymath}
\Hs^\nbb=\{\Ubf:\nbb\to\Hs\}=\{\Ubf = (U_0,U_1,U_2,\dots): U_i\in\Hs\},
\end{displaymath} 
and let $\PMarkov(\Hs^\nbb\times\Hs^\nbb)$ be the set of probability measures on $\Hs^\nbb\times\Hs^\nbb$. For any two measures $M_1,\, M_2$ on $\Hs^\nbb$, we denote by $\wt{\mathcal{C}}(M_1,M_2)$ the collection of \emph{asymptotically equivalent coupling} for $M_1,\, M_2$,
\begin{equation}
\wt{\mathcal{C}}(M_1,M_2)=\{\Gamma\in\PMarkov(\Hs^\nbb\times\Hs^\nbb):\Gamma\Pi_i^{-1}<<M_i,i=1,2\},
\end{equation}
 where $\Pi_1(u,v)=u$ and $\Pi_2(u,v)=v$. For any initial condition $X_0\in\Hs$, let $\Xbf = (X_0,X(1),X(2),\dots)$ be the corresponding solution path on $\Hs^\nbb$ where $X(t)$ solves~\eqref{eqn:infinite-mode-short}. Then the law of $\Xbf$, denoted by $\delta_{X_0}\PMarkov^\nbb$, defines a probability measure on $\Hs^\nbb$. Next, we introduce the set $\mathscr{D}$ given by
 \begin{align} \label{form:coupling:D}
 \mathscr{D}=\{(\Ubf,\Vbf)\in\Hs^\nbb\times\Hs^\nbb:\lim_{n\to\infty} \|U_n-V_n\|_{-s}=0  \}.
 \end{align}
Having introduced the above, we will seek to apply the following result (c.f. Corollary~2.2 of~\cite{hairer2011asymptotic} and Corollary 2.1 \cite{glatt2017unique}).

\begin{theorem} \label{thm:unique:glatt}
If for every pair $X_0,\, \Xtil_0\in\Hs$, there exists an element $\Gamma\in\wt{C}(\delta_{X_0}\PMarkov^\nbb,\delta_{\Xtil_0}\PMarkov^\nbb)$ such that $\Gamma(\mathscr{D})>0$, then there exists at most one ergodic invariant measure for~\eqref{eqn:infinite-mode-short}.
\end{theorem}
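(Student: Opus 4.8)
The plan is to argue by contradiction, following~\cite{hairer2011asymptotic}. Suppose~\eqref{eqn:infinite-mode-short} admits two distinct ergodic invariant probability measures $\mu_1\neq\mu_2$, and write $Q_i:=\mu_i\PMarkov^\nbb=\int_{\Hs}\delta_x\PMarkov^\nbb\,\mu_i(dx)$ for the induced stationary measures on the path space $\Hs^\nbb$. Since $\mu_i$ is an ergodic invariant measure, $(Q_i,\Theta)$ is ergodic for the shift $\Theta$ on $\Hs^\nbb$. Fix a countable measure-determining family $\{f_j\}\subset C_b(\Hs)$ of bounded Lipschitz functions, with Lipschitz constants $L_j$, and set
\begin{displaymath}
A_i=\bigcap_{j\geq 1}\Big\{\Ubf\in\Hs^\nbb:\ \lim_{n\to\infty}\frac1n\sum_{k=0}^{n-1}f_j(U_k)=\int_{\Hs}f_j\,d\mu_i\Big\}.
\end{displaymath}
Birkhoff's ergodic theorem gives $Q_i(A_i)=1$, while $A_1\cap A_2=\emptyset$ because $\mu_1\neq\mu_2$ forces $\int f_j\,d\mu_1\neq\int f_j\,d\mu_2$ for some $j$.

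The crux is that the set $\mathscr{D}$ of~\eqref{form:coupling:D} is incompatible with separating the two path-coordinates via $A_1$ and $A_2$. Indeed, if $(\Ubf,\Vbf)\in\mathscr{D}$ then $\|U_n-V_n\|_{-s}\to 0$, so $|f_j(U_n)-f_j(V_n)|\leq L_j\|U_n-V_n\|_{-s}\to 0$ and hence $\frac1n\sum_{k=0}^{n-1}\big(f_j(U_k)-f_j(V_k)\big)\to 0$ for every $j$; therefore $\Ubf\in A_1$ together with $(\Ubf,\Vbf)\in\mathscr{D}$ forces $\Vbf\in A_1$, which is disjoint from $A_2$. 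Thus $\mathscr{D}\cap(A_1\times A_2)=\emptyset$. Consequently, for \emph{every} $\Gamma\in\wt{\mathcal{C}}(Q_1,Q_2)$ one has $\Gamma(\mathscr{D})=0$: the conditions $\Gamma\Pi_1^{-1}\ll Q_1$ and $\Gamma\Pi_2^{-1}\ll Q_2$ give $\Gamma(\Pi_1\in A_1)=\Gamma(\Pi_2\in A_2)=1$, so $\Gamma$ is concentrated on $A_1\times A_2$.

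To reach the contradiction, I would manufacture such a $\Gamma$ with $\Gamma(\mathscr{D})>0$ out of the hypothesis. For each pair $(x,y)\in\Hs\times\Hs$ the hypothesis provides some $\Gamma_{x,y}\in\wt{\mathcal{C}}(\delta_x\PMarkov^\nbb,\delta_y\PMarkov^\nbb)$ with $\Gamma_{x,y}(\mathscr{D})>0$; by a measurable-selection (uniformization) argument one may take $(x,y)\mapsto\Gamma_{x,y}$ measurable and define
\begin{displaymath}
\Gamma:=\int_{\Hs\times\Hs}\Gamma_{x,y}\,\mu_1(dx)\,\mu_2(dy).
\end{displaymath}
Since $\Gamma_{x,y}\Pi_1^{-1}\ll\delta_x\PMarkov^\nbb$ for every $(x,y)$ and $\int_{\Hs}\delta_x\PMarkov^\nbb\,\mu_1(dx)=Q_1$, one checks $\Gamma\Pi_1^{-1}\ll Q_1$: if $Q_1(B)=0$ then $\delta_x\PMarkov^\nbb(B)=0$ for $\mu_1$-a.e.\ $x$, hence $\Gamma_{x,y}(\Pi_1\in B)=0$ for $(\mu_1\times\mu_2)$-a.e.\ $(x,y)$, hence $\Gamma(\Pi_1\in B)=0$; symmetrically $\Gamma\Pi_2^{-1}\ll Q_2$. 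Thus $\Gamma\in\wt{\mathcal{C}}(Q_1,Q_2)$, yet $\Gamma(\mathscr{D})=\int_{\Hs\times\Hs}\Gamma_{x,y}(\mathscr{D})\,\mu_1(dx)\mu_2(dy)>0$ since the integrand is everywhere strictly positive. This contradicts $\Gamma(\mathscr{D})=0$ from the previous step, so there is at most one ergodic invariant measure.

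The one genuinely delicate point is the measurable-selection step: one must verify that the multifunction $(x,y)\mapsto\{\Gamma:\Gamma\Pi_1^{-1}\ll\delta_x\PMarkov^\nbb,\ \Gamma\Pi_2^{-1}\ll\delta_y\PMarkov^\nbb,\ \Gamma(\mathscr{D})>0\}$ has a measurable graph in $\Hs\times\Hs\times\PMarkov(\Hs^\nbb\times\Hs^\nbb)$ and admits a (universally) measurable selection; this rests on measurability of $x\mapsto\delta_x\PMarkov^\nbb$, which follows from the Feller property of Proposition~\ref{prop:sol'n:feller}, and on $\mathscr{D}$ being Borel in $\Hs^\nbb\times\Hs^\nbb$. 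Everything else — ergodicity of $(Q_i,\Theta)$, mutual singularity of distinct ergodic measures, and the Ces\`aro estimate on $\mathscr{D}$ — is classical. Since the statement is quoted essentially verbatim from~\cite{hairer2011asymptotic} (see also~\cite{glatt2017unique}), in the paper itself I would simply invoke those references rather than reproduce the argument; the plan above is how one reconstructs it.
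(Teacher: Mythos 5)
The paper does not actually prove this statement: it is quoted as Corollary~2.2 of \cite{hairer2011asymptotic} (see also Corollary~2.1 of \cite{glatt2017unique}) and simply invoked, exactly as you suggest doing at the end of your proposal. So there is no internal proof to compare against; what can be assessed is your reconstruction of the cited argument.

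Your reconstruction has the right skeleton: distinct ergodic measures are separated through Birkhoff limits of a countable Lipschitz measure-determining family, the set $\mathscr{D}$ is disjoint from $A_1\times A_2$ because Ces\`aro averages of bounded Lipschitz functions cannot distinguish asymptotically identical paths, and any asymptotically equivalent coupling of the two stationary path measures is concentrated on $A_1\times A_2$ and hence gives $\mathscr{D}$ measure zero. The one place where you depart from the reference, and where your write-up has a genuine soft spot, is the final step: you manufacture a coupling of $Q_1$ and $Q_2$ by integrating a measurably selected family $\Gamma_{x,y}$ against $\mu_1\times\mu_2$, and you correctly flag the measurable selection as delicate --- it is not justified as written. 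It is also unnecessary. Since $Q_i(A_i)=1$ and $Q_i=\int_{\Hs}\delta_x\PMarkov^\nbb\,\mu_i(dx)$, one has $\delta_x\PMarkov^\nbb(A_1)=1$ for $\mu_1$-a.e.\ $x$ and $\delta_y\PMarkov^\nbb(A_2)=1$ for $\mu_2$-a.e.\ $y$; fix one such generic pair $(x^*,y^*)$ and take the single coupling $\Gamma_{x^*,y^*}$ furnished by the hypothesis. The marginal absolute continuity then forces $\Gamma_{x^*,y^*}(\Pi_1\in A_1)=\Gamma_{x^*,y^*}(\Pi_2\in A_2)=1$, hence $\Gamma_{x^*,y^*}(A_1\times A_2)=1$, contradicting $\Gamma_{x^*,y^*}(\mathscr{D})>0$ directly. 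This is how \cite{hairer2011asymptotic} argues; it removes the only unproved step in your plan, after which the argument is complete.
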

The problem thus reduces to constructing such a coupling $\Gamma$. To this end, we introduce another process $\Xtil(t)$ on $\H_{-s}$ satisfying the following shifted version of equation~\eqref{eqn:infinite-mode-short}  
\begin{align} 
d\Xtil(t)&=L\Xtil(t)\, dt+F(\Xtil(t))\, dt+B\, dW(t)+B \, U(X(t),\Xtil(t))\textbf{1}\{t\leq\tau\}\, dt.\label{eqn:infinite-mode-coupling-b}
\end{align}
In the above, $\Xtil(0)=\Xtil_0\in \H_{-s}$, $\tau$ is a stopping time and $U(X(t),\Xtil(t))\in L^2\left(\Omega,\W\right)$ is an adapted control depending on both $\Xtil$ and the process $X$ satisfying~\eqref{eqn:infinite-mode-short} with $X(0)=X_0 \in \H_{-s}$. Here we recall that $\W$ is the auxiliary Hilbert space on which $W(t)$ evolves, \cite{da2014stochastic}. Now notice that if we set 
\begin{equation}\label{eqn:coupling-noise}
\widetilde{W}(t)=W(t)+\int_0^tU(X(r),\Xtil(r)) \textbf{1}\{r\leq\tau\}dr
\end{equation}
and the control $U$ and stopping time $\tau$ are such that, for some deterministic constant $C>0$,  
\begin{equation}\label{ineq:Girsanov-1}
\mathbb{P} \Big\{ \int_0^\infty \|U(X(t),\Xtil(t)) \textbf{1}\{t\leq\tau\}\|_\W^2 \, dt <C\Big\} =1,\end{equation}
then $W$ and $\wt{W}$ are equivalent on $C([0,\infty);\W)$. As a consequence, the processes $X$ and $\Xtil$ with $X(0)=\Xtil(0)= \Xtil_0\in \H_{-s}$ are mutually absolutely continuous on the infinitie time horizon $[0, \infty)$. As shown later in the proof of Theorem~\ref{thm:inv-measure:unique}, our \emph{coupling} $\Gamma$ is essentially the law of the pair $(X(\cdot,X_0),\Xtil(\cdot,\Xtil_0))$. However, in order for $\Gamma$ to meet the requirement $\Gamma(\mathscr{D})>0$ from Theorem~\ref{thm:unique:glatt}, by introducing the difference 
\begin{align}
\label{eqn:difference}
\Xbar(t)=X(t)-\Xtil(t)=\left(\overline{x}(t),\overline{v}(t),\overline{z_1}(t),\dots\right), 
\end{align}
we have to pick $U$ and $\tau$ such that~\eqref{ineq:Girsanov-1} is satisfied, $\P{\tau=\infty}>0$ and 
\begin{align*}
\|\Xbar(t)\|_{-s}\to 0 \, \, \text{ as } \,\, t\to\infty \,\, \text{ on the event } \,\, \{\tau=\infty\}. 
\end{align*}
Thus, reemphasizing what was discussed above, we are constructing the control $U$ and the stopping time $\tau$ such that we can drive two solutions of \eqref{eqn:infinite-mode-short} with different initial data to one another as $t\rightarrow \infty$ on a set of positive probability.  

To introduce our choice of $U$ and $\tau$, first observe that $\Xbar$ satisfies $\Xbar(0)= X_0-\Xtil_0$ and 
\begin{equation} \label{eqn:infinite-mode-coupling-c}
d\Xbar(t)=L\Xbar(t)\, dt+\big[F(X(t))-F(\Xtil(t))\big]\, dt-B \, U(X(t),\Xtil(t)) \textbf{1}\{t\leq\tau\} \, dt.
\end{equation}
Writing $X(t)= (x(t), v(t), z_1(t), \ldots)$, $\Xtil(t)=(\tilde{x}(t), \tilde{v}(t), \tilde{z}_1(t), \ldots)$ and recalling the notation~\eqref{eqn:difference}, we define for given $\lambda >0$
\begin{multline} \label{eqn:coupling}
u_0(X(t), \Xtil(t))=\frac{m}{\sqrt{2\gamma}}\Big[\left(3\lambda-\frac{\gamma}{m}\right)\overline{v}(t)+2\lambda^2\xbar(t)\\
-\frac{1}{m}\left(\Phi'(x(t))-\Phi'(\widetilde{x}(t))\right)-\frac{1}{m}\sum_{k\geq 1}\sqrt{c_k}\zbar_k(t)\Big] ,
\end{multline}
and 
\begin{align}
U(X(t), \Xtil(t))= (0, u_0(X(t), \Xtil(t)), 0, 0, \cdots). 
\end{align}
Note that $B U$ only possibly enacts control over the velocity difference $\bar{v}(t) = v(t) - \tilde{v}(t)$, and this is essentially done to gain control over nonlinear difference $\Phi'(x)- \Phi'(\tilde{x})$.  

For a given $\kappa >0$, we define the stopping time $\tau=\tau(\kappa)$ by   
\begin{equation}\label{eqn:stop-time}
\tau=\inf_{t\geq 0}\left\{\int_0^t |u_0(X(s), \Xtil(s))|^2 \, ds\geq \kappa\right\}.
\end{equation}

With these choices, note that on the event $\{ t< \tau\}$, $\Xbar(t)=(\bar{x}(t), \bar{v}(t), \bar{z}_1(t), \ldots)$ satisfies the following system of equations
\begin{align}\label{eqn:infinite-coupling}
\frac{d \xbar(t)}{dt} &=\ \vbar(t), & \xbar(0)&=\xbar_0,\\
\nonumber \frac{d \vbar(t)}{dt}&=-3\lambda\vbar(t)-2\lambda^2\xbar(t), &\vbar(0)&=\vbar_0,\\
\nonumber \frac{d \zbar_k(t)}{dt}&=-\lambda_k \zbar_k(t)+\sqrt{c_k}\vbar(t),&\zbar_k(0)&=(\zbar_k)_0 .\end{align}
Intuitively, the coefficient $\lambda>0$ will be picked so that that $\|\Xbar(t)\|_{\H_{-s} }\rightarrow 0$ as $t\rightarrow \infty$ on the event $\{ \tau = \infty\}$.   Hence the control induces the requisite dissipation, but we still need to see that we can pick $\kappa >0$ so that ~\eqref{ineq:Girsanov-1} is satisfied and $\mathbb{P} \{ \tau=\infty\} >0$.  Before turning to this issue, we make the following remark.

\begin{remark} (a) There is a significantly flexibility in the choice of $u_0$ in \eqref{eqn:coupling}. One can of course choose other formulas for the coefficients of $\xbar(t)$ and $\vbar(t)$ in \eqref{eqn:coupling} as long as $\| \Xbar(t)\|_{-s}\rightarrow 0$ as $t\rightarrow \infty$ on $ \{ \tau= \infty\}$. 

(b). The appearance of $u_0$ requires the drag constant $\gamma$ be strictly positive. We note that for well-posedness (cf. Proposition~\ref{prop:sol'n}) and the existence of invariant measures (cf. Theorem~\ref{thm:density}), $\gamma$ can be zero.

\end{remark}

With these observations, we state the following proposition which outlines the needed details to deduce unique ergodicity.  
\begin{proposition}\label{prop:asymp-coupling} Under the Hypothesis of Theorem~\ref{thm:inv-measure:unique} and recalling $m,\gamma>0$ from~\eqref{eqn:GLE-Markov} and $\alpha,\,\beta$ as in~\eqref{c-k}, let $\lambda>0$ be as in~\eqref{eqn:coupling} and $\kappa>0$ as in~\eqref{eqn:stop-time}. Then there exist $\lambda=\lambda(\alpha,\beta)>0$, $\kappa=\kappa(X_0,\Xtil_0,\gamma,m,\alpha,\beta)>0$ such that $\tau=\tau(\kappa)$ and $U$ are such that 
\begin{itemize}
\item[(a)]  Condition~\eqref{ineq:Girsanov-1} is satisfied. 
\item[(b)] $\|\Xbar(t)\|_{-s}\rightarrow 0$ as $t\rightarrow\infty$ on $\{\tau=\infty\}$. 
\item[(c)] $\P{\tau=\infty}>0$.
\end{itemize} 
\end{proposition}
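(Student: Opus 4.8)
The plan is to verify (b) first, then (a) (which is essentially immediate), and finally (c), which is where the work lies. Throughout, take $\lambda>1$ — it will be enlarged further when treating (c) — and defer the choice of $\kappa$ to the end. The crucial structural point is that $u_0$ in~\eqref{eqn:coupling} is engineered so that, on $\{t\leq\tau\}$, the nonlinear difference $\Phi'(x(t))-\Phi'(\xtil(t))$ and the coupling term $\tfrac1m\sum_k\sqrt{c_k}\zbar_k(t)$ are exactly cancelled in the $\vbar$-equation; hence $\Xbar(t)=X(t)-\Xtil(t)$ solves the decoupled \emph{deterministic} linear system~\eqref{eqn:infinite-coupling} for all $t\leq\tau$, and for all $t\geq 0$ on $\{\tau=\infty\}$. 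In particular $\Xtil(t)=X(t)-\Xbar(t)$ is well defined on $[0,\tau]$, so $u_0(X(t),\Xtil(t))$ and $\tau$ itself are well defined.

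For (b): the $(\xbar,\vbar)$-block of~\eqref{eqn:infinite-coupling} has coefficient matrix with characteristic polynomial $(r+\lambda)(r+2\lambda)$, so $|\xbar(t)|+|\vbar(t)|\leq C(\lambda)\|\Xbar_0\|_{-s}e^{-\lambda t}$. Writing $\zbar_k$ by Duhamel and using $\lambda>1\geq\lambda_k$ to bound the convolution by $C\sqrt{c_k}\|\Xbar_0\|_{-s}e^{-\lambda_k t}$, one gets $k^{-2s}\zbar_k(t)^2\leq C\big(k^{-2s}(\zbar_k)_0^2+k^{-(1+\alpha\beta+2s)}\big)e^{-2\lambda_k t}$; the two $k$-sums are dominated by convergent series, each term vanishes as $t\to\infty$, so dominated convergence gives $\sum_k k^{-2s}\zbar_k(t)^2\to 0$, and together with the exponential decay of $\xbar,\vbar$, $\|\Xbar(t)\|_{-s}\to0$ on $\{\tau=\infty\}$. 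For (a): since $\|U(X,\Xtil)\|_\W^2=|u_0(X,\Xtil)|^2$, the definition of $\tau$ in~\eqref{eqn:stop-time} gives $\int_0^\infty\|U(X(t),\Xtil(t))\mathbf{1}\{t\leq\tau\}\|_\W^2\,dt=\int_0^\tau|u_0(X(t),\Xtil(t))|^2\,dt\leq\kappa$, so~\eqref{ineq:Girsanov-1} holds with $C=\kappa+1$.

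For (c) I would obtain a pointwise bound, valid on $\{t\leq\tau\}$, of the form $|u_0(t)|^2\leq C\big(\xbar(t)^2+\vbar(t)^2+|\Phi'(x(t))-\Phi'(\xtil(t))|^2+(\sum_k\sqrt{c_k}\zbar_k(t))^2\big)$ and integrate term by term. The $\xbar,\vbar$ terms decay like $e^{-2\lambda t}$ by (b). For the $z$-sum, apply Cauchy--Schwarz with weight $k^{-2\sigma}$, $\sigma:=s+\beta/2$: $(\sum_k\sqrt{c_k}\zbar_k)^2\leq\big(\sum_k c_k k^{2\sigma}\big)\big(\sum_k k^{-2\sigma}\zbar_k^2\big)$, where $\sum_k c_k k^{2\sigma}=\sum_k k^{-(1+\alpha\beta-2\sigma)}<\infty$ because $2\sigma=2s+\beta<\alpha\beta$ — this is exactly Condition (D) of Assumption~\ref{cond:sol'n}. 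Bounding $\int_0^\infty\zbar_k(t)^2\,dt$ by $(\zbar_k)_0^2/\lambda_k+(c_k/\lambda_k^2)\|\vbar\|_{L^2}^2$ (Young's convolution inequality, with $\|\vbar\|_{L^2}^2\leq C\|\Xbar_0\|_{-s}^2/\lambda$), the resulting $k$-sum is $\sum_k k^{\beta-2\sigma}(\zbar_k)_0^2+C\lambda^{-1}\sum_k k^{-(1+\alpha\beta+2\sigma-2\beta)}$, both finite (the first since $\beta-2\sigma=-2s$ and $\Xbar_0\in\Hs$, the second since $2\sigma>2\beta-\alpha\beta$, which holds as $\alpha>1$). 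So $\int_0^\infty(\sum_k\sqrt{c_k}\zbar_k(t))^2\,dt\leq C_2<\infty$, a deterministic constant. For the nonlinear term, Assumption~\ref{cond:Phi-1} gives $|\Phi'(x(t))-\Phi'(\xtil(t))|\leq|\xbar(t)|(f(\xbar(t))+\Phi(x(t))^q)\leq C\|\Xbar_0\|_{-s}e^{-\lambda t}(1+\Phi(x(t))^q)$, using that $|\xbar(t)|$ is bounded and $f$ is bounded on bounded sets; this contributes $C\int_0^\infty e^{-2\lambda t}(1+\Phi(x(t))^{2q})\,dt$. Setting $J:=\int_0^\infty e^{-2\lambda t}\Phi(x(t))^{2q}\,dt$, and using a standard $\Psi^p$-moment estimate built from Proposition~\ref{prop:Ito-Lyapunov} (namely $\E{\Psi(X(t))^{2q}}\leq C(1+\Psi(X_0))^{2q}e^{c_{2q}t}$) together with $\Phi\leq m\Psi$, one gets $\E{J}<\infty$ \emph{provided} $\lambda>c_{2q}/2$; this is the constraint that dictates how large $\lambda$ must be taken, and in particular $J<\infty$ a.s. Assembling these, for $t\leq\tau$ we have $\int_0^t|u_0(s)|^2\,ds\leq C'+C''J$ with $C',C''$ deterministic, so $\{J<(\kappa-C')/C''\}\subseteq\{\tau=\infty\}$ (because $\tau<\infty$ forces $\kappa=\int_0^\tau|u_0|^2\,ds\leq C'+C''J$). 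Since $J<\infty$ a.s., $\P{J<M}\to1$ as $M\to\infty$; pick $M$ with $\P{J<M}>0$ and set $\kappa:=C'+C''M$, giving $\P{\tau=\infty}\geq\P{J<M}>0$, which is (c).

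The main obstacle is the time-integrability of $(\sum_k\sqrt{c_k}\zbar_k(t))^2$: the Cauchy--Schwarz splitting forces $\sigma<\alpha\beta/2$ (so that $\sum_k c_k k^{2\sigma}<\infty$), while the contribution $\sum_k k^{\beta-2\sigma}(\zbar_k)_0^2$ coming from an arbitrary initial datum $\Xbar_0\in\Hs$ forces $\sigma\geq s+\beta/2$; these are compatible exactly when $s+\beta/2<\alpha\beta/2$, i.e. $\alpha>1$ and $s<(\alpha-1)\beta/2$ — precisely Condition (D). This is the structural reason the coupling construction does not carry over to the subdiffusive regime. A secondary technical point is the polynomial-moment Lyapunov bound needed to control $J$, and the attendant requirement that $\lambda$ be chosen sufficiently large relative to the exponent $q$ in Assumption~\ref{cond:Phi-1}.
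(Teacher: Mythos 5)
Your proposal is correct, and parts (a) and (b) follow essentially the same route as the paper: (a) is immediate from the definition of $\tau$, and (b) comes from solving the linear system \eqref{eqn:infinite-coupling} explicitly, taking $\lambda>\lambda_1$, and summing the resulting bounds on $\zbar_k$. Your treatment of the sum $\sum_k\sqrt{c_k}\zbar_k$ in part (c) is organized slightly differently (an $L^2$-in-time estimate via Young's convolution inequality with weight $k^{-2\sigma}$, $\sigma=s+\beta/2$, rather than the paper's pointwise-in-time Cauchy--Schwarz followed by integration), but both hinge on exactly the same convergence condition $s<(\alpha-1)\beta/2$, and you correctly identify this as the structural obstruction in the subdiffusive regime. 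The genuine divergence is in how you control $\int_0^\infty e^{-2\lambda t}\Phi(x(t))^{2q}\,dt$. The paper introduces the modified Lyapunov function $\Theta$ of \eqref{eqn:Lyapunov:diffusive}, proves $\L\Theta\leq a$ (Proposition~\ref{prop:Ito-Lyapunov:diffusive}), and applies the exponential martingale inequality (Corollary~\ref{cor:exp-g}) to produce, for any $\eta>0$, a positive-probability event on which $\sup_t e^{-\eta t}\Phi(x(t))$ is bounded by a deterministic constant; this works for the fixed choice $\lambda=\lambda_1+1$ by taking $\eta=\lambda/2q$. You instead set $J=\int_0^\infty e^{-2\lambda t}\Phi(x(t))^{2q}\,dt$, invoke a higher-moment Lyapunov bound $\E{\Psi(X(t))^{2q}}\leq C e^{c_{2q}t}$ to get $\E{J}<\infty$ for $\lambda>c_{2q}/2$, and take the event $\{J<M\}$. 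This is a valid alternative: it bypasses $\Theta$ and the exponential-martingale machinery entirely, at the price of (i) an auxiliary estimate $\L(\Psi^p)\leq c_p\Psi^p+c_p'$ that you assert as standard but do not prove (it does hold here, since the carr\'e du champ of $\Psi$ is dominated by $\Psi$, so you should state and localize it), and (ii) a choice of $\lambda$ that must be large depending on $q$ and the Lyapunov constants, so your $\lambda$ depends on $m,\gamma,q$ as well as $\alpha,\beta$ --- a harmless deviation from the parameter-dependence advertised in the statement, since $\lambda$ is a free design parameter of the control. The paper's route gives the sharper qualitative information (an almost-sure-in-$t$ supremum bound on a positive-probability event) whereas yours only needs an integrated quantity, which is all the argument actually requires.
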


Before presenting the proof of Proposition~\ref{prop:asymp-coupling}, we now show how one can deduce unique ergodicity of~\eqref{eqn:infinite-mode-short} by combining Proposition~\ref{prop:asymp-coupling} and Theorem~\ref{thm:unique:glatt}, (see \cite{hairer2011asymptotic,  glatt2017unique} for further details).
\begin{proof}[Proof of Theorem~\ref{thm:inv-measure:unique}] In view of Proposition~\ref{prop:asymp-coupling} (a), the Novikov's condition is verified
\begin{align*}
\Enone\Big[\exp{\int_0^\infty \|U(X(t),\Xtil(t)) \textbf{1}\{t\leq\tau\}\|_\W^2 \, dt}\Big]<e^C.
\end{align*}
The process $\wt{W}(t)$ defined in~\eqref{eqn:coupling-noise} is thus equivalent to the Wiener process $W(t)$ on $C([0,\infty);\W)$ by Girsanov's theorem, (Theorem 10.4 from~\cite{da2014stochastic}). It follows that the process $\wt{X}(\cdot,\wt{X}_0)$ solving~\eqref{eqn:infinite-mode-coupling-b} is absolutely continuous to the process $X(\cdot,\Xtil_0)$ on $C([0,\infty);\Hs)$, (see e.g. \cite{revuz2013continuous}). It follows that the law $\Gamma$ induced by 
\begin{displaymath}
\big\{\big( X(nt,X_0),\Xtil(nt,\Xtil_0) \big):n=0,1,2,\dots\big\}
\end{displaymath} belongs to $\wt{\mathcal{C}}(\delta_{X_0}\PMarkov^\nbb,\delta_{\Xtil_0}\PMarkov^\nbb)$. In addition, Proposition~\ref{prop:asymp-coupling} (b) and (c) imply that $\Gamma(\mathscr{D})>0$ where $D$ is given by~\eqref{form:coupling:D}. We therefore conclude unique ergodicity by virtue of Theorem~\ref{thm:unique:glatt}, thus completing the proof.
\end{proof}
We now turn to the proof of Proposition \ref{prop:asymp-coupling}. Parts (a) and (b) essentially follow by construction.  Establishing part (c), however, requires a bit more work.  To complete the proof, we need a crucial estimate on the potential $\Phi$ which relies on Lyapunov methods.  To this end, for $N \in \nbb$ and $s \in \rbb$, we introduce $\Theta:\H_{-s}\to\rbb$ defined by 
\begin{equation} \label{eqn:Lyapunov:diffusive}
\Theta(X ; s, N)=\frac{1}{m}\Phi(x)+\frac{1}{2} v^2+\frac{1}{2m}\sum_{k=1}^N z_k^2+\frac{1}{2}\sum_{k > N}k^{-2s}z_k^2.
\end{equation}
In the diffusive regime, it turns out that $\Theta(X;s,N)$ can be chosen such that it satisfies a Lyapunov bound that is stronger than the bound on $\Psi$ from Proposition~\ref{prop:Ito-Lyapunov}. That is:
\begin{proposition}\label{prop:Ito-Lyapunov:diffusive}
Let $\Theta(X;s,N)$ be defined as in \eqref{eqn:Lyapunov:diffusive}.  Then, under Assumptions \ref{cond:Phi} and Condition \emph{(D)} of Assumption \ref{cond:sol'n}, there exists $N=N(m,\gamma,\alpha,\beta,s) \in \nbb$ sufficiently large such that, for some $ a > 0$, $\Theta(X):=\Theta(X;s,N)$ satisfies
\begin{displaymath}
\sup_{X\in\H_{-s}}\L \Theta(X ) \leq a.
\end{displaymath}
\end{proposition}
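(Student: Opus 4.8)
The plan is to compute $\L\Theta$ directly from the explicit generator~\eqref{defn:Markov-gen}, exploiting a cancellation built into the weights of $\Theta$, and then to show every resulting term is either nonpositive or a finite constant. First I would record the partial derivatives: $\partial_x\Theta = \frac1m\Phi'(x)$, $\partial_v\Theta = v$, $\partial_v^2\Theta = 1$, while $\partial_{z_k}\Theta = \frac1m z_k$ and $\partial_{z_k}^2\Theta = \frac1m$ for $k\le N$, and $\partial_{z_k}\Theta = k^{-2s}z_k$, $\partial_{z_k}^2\Theta = k^{-2s}$ for $k>N$. Substituting into~\eqref{defn:Markov-gen}, the term $v\,\partial_x\Theta = \frac1m v\Phi'(x)$ cancels the contribution $-\frac1m\Phi'(x)\,\partial_v\Theta$, and --- this is the point of putting the weight $\frac1m$ on the first $N$ modes --- for every $k\le N$ the cross term $-\frac1m\sqrt{c_k}\,v z_k$ produced by $\partial_v\Theta$ cancels exactly the cross term $+\frac{\sqrt{c_k}}{m}v z_k$ produced by $\partial_{z_k}\Theta$. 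What survives is
\begin{equation*}
\L\Theta(X) = -\frac{\gamma}{m}v^2 - \frac1m\sum_{k\le N}\lambda_k z_k^2 - \sum_{k>N}\lambda_k k^{-2s}z_k^2 - \frac1m v\sum_{k>N}\sqrt{c_k}\,z_k + \sum_{k>N}\sqrt{c_k}\,k^{-2s}v z_k + C_N,
\end{equation*}
with $C_N = \frac{\gamma}{m^2} + \frac1m\sum_{k\le N}\lambda_k + \sum_{k>N}\lambda_k k^{-2s}$; so only two cross-sums, ranging over $k>N$ only, stand between us and the bound.

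The crux is then to absorb those two sums using Young's inequality \emph{mode by mode} against the dissipation $-\lambda_k k^{-2s}z_k^2$ in the high block --- a single Cauchy--Schwarz estimate would not see the (vanishing) factor $\lambda_k$ and would fail. For each $k>N$ I would write
\begin{equation*}
\Big|\tfrac1m\sqrt{c_k}\,v z_k\Big| + \big|\sqrt{c_k}\,k^{-2s}v z_k\big| \le \lambda_k k^{-2s}z_k^2 + \Big(\frac{c_k k^{2s}}{2m^2\lambda_k} + \frac{c_k k^{-2s}}{2\lambda_k}\Big)v^2,
\end{equation*}
and summing over $k>N$ the $z_k^2$ part cancels the dissipation in the high block, leaving
\begin{equation*}
\L\Theta(X) \le \Big(-\frac{\gamma}{m} + \frac{1}{2m^2}\sum_{k>N}\frac{c_k k^{2s}}{\lambda_k} + \frac12\sum_{k>N}\frac{c_k k^{-2s}}{\lambda_k}\Big)v^2 - \frac1m\sum_{k\le N}\lambda_k z_k^2 + C_N.
\end{equation*}

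Finally I would check the two tail sums. With $c_k=k^{-(1+\alpha\beta)}$, $\lambda_k=k^{-\beta}$ one has $c_k k^{2s}/\lambda_k = k^{-1-(\alpha-1)\beta+2s}$ and $c_k k^{-2s}/\lambda_k = k^{-1-(\alpha-1)\beta-2s}$, and Condition (D), which forces $2s < (\alpha-1)\beta$, is exactly what makes both summable; being convergent tails, they tend to $0$ as $N\to\infty$. Hence one can fix $N = N(m,\gamma,\alpha,\beta,s)$ so large that the bracketed coefficient of $v^2$ is $\le 0$, and then $\L\Theta(X) \le C_N =: a < \infty$ for all $X\in\H_{-s}$, since $\sum_{k\le N}\lambda_k$ is a finite sum and $\sum_{k>N}\lambda_k k^{-2s}=\sum_{k>N}k^{-\beta-2s}<\infty$ because $\beta+2s > 1$. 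The one structural point to emphasize is that the argument truly needs $\alpha>1$: for $\alpha\le 1$ the exponent $-1-(\alpha-1)\beta+2s$ is $\ge 2s-1>0$, the tail sums diverge, and no $N$ suffices --- which is precisely why this sharper Lyapunov estimate, with no $\Theta$-term on the right, exists only in the diffusive regime, and why the coupling argument of Section~\ref{sec:invariant-unique} does not extend to the other cases.
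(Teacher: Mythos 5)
Your proposal is correct and follows essentially the same route as the paper: the same cancellations coming from the weights in $\Theta$, followed by Young's inequality played against the high-mode dissipation, with Condition (D) guaranteeing that the tail sums $\sum_{k>N}c_k k^{\pm 2s}/\lambda_k$ are finite and can be made small by taking $N$ large. The only difference is the allocation of the Young constants --- you consume all of the dissipation $-\sum_{k>N}\lambda_k k^{-2s}z_k^2$ and push the coefficient of $v^2$ to be $\le 0$, whereas the paper retains a residual $-\frac{\gamma}{2m}v^2 - a_1\sum_{k>N}k^{-2s}\lambda_k z_k^2$ as in \eqref{ineq:Ito-Lyapunov:diffusive:4}; that leftover dissipation is what later absorbs the quadratic variation in the proof of Corollary~\ref{cor:exp-g}, so the sharper form is worth keeping even though your bound fully suffices for the proposition as stated.
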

\noindent Proposition~\ref{prop:Ito-Lyapunov:diffusive} will be established at the end of the section, but the proof follows a similar line of reasoning to that employed in the proof of Proposition~\ref{prop:Ito-Lyapunov}.  

\begin{remark} (a) In Assumption~\ref{cond:sol'n}, the diffusive regime (D) requires $\alpha >1$ as opposed to $\alpha \in (0,1)$ and $\alpha =1$ in, respectively, the subdiffusive (SD) and critical (C) regimes.  Recalling $c_k,\,\lambda_k$ from~\eqref{c-k}, the condition $\alpha>1$ is needed so that the infinite sum 
\begin{displaymath}
\sum_{k\geq 1}\frac{c_k}{2\lambda_k k^{-2s}}=\sum_{k\geq 1}k^{-1-(\alpha-1)\beta+s}
\end{displaymath} 
converges, as shown later in \eqref{ineq:coupling-18} and \eqref{eqn:Ito-Lyapunov:diffusive:5}.  This convergence is critically employed in the proofs of Proposition~\ref{prop:asymp-coupling} (c) and Propositon~\ref{prop:Ito-Lyapunov:diffusive}. 

(b) The asymptotic behavior of $\lambda_k$ as $k\to\infty$ presents a barrier to obtaining a stronger Lyapunov bound of the form
\begin{displaymath}
\label{eqn:geoergbound}
\L \Theta(X ) \leq -c\Theta(X) + a
\end{displaymath}
in the proof of Proposition~\ref{prop:Ito-Lyapunov:diffusive}, where $c>0$ is a constant.  The above inequality, however, can be readily achieved in the finite-dimensional system~\eqref{eqn:finite-mode}, see \cite{ottobre2011asymptotic}.  With the appropriate support properties of the diffusion, such a bound implies geometric ergodicity.  However, because we cannot see immediately why~\eqref{eqn:geoergbound} holds in our infinite-dimensional system, suggests that perhaps the system relaxes to equilibrium slower than an exponential rate.          
\end{remark}

By combining the previous Proposition with the exponential martingale inequality, we obtain the following corollary.    
\begin{corollary} \label{cor:exp-g} Under Assumptions \ref{cond:Phi} and Condition \emph{(D)} of Assumption~\ref{cond:sol'n}, let $X(t)=(x(t), v(t),  \ldots)$ be the solution of \eqref{eqn:GLE-Markov} with initial condition $X_0\in \H_{-s}$. Let $\Theta$ be the Lyapunov function defined in \eqref{eqn:Lyapunov:diffusive}. Then there exists $\varepsilon=\varepsilon(m,\gamma,\alpha,\beta)>0$ such that for every $\eta,r>0$, 
\begin{equation} \label{ineq:exp-g-1}
\Pnone \Big\{\sup_{t\geq 0}\frac{e^{-\eta t}\Phi(x(t))}{m}-\Theta(X_0)-\frac{a}{\eta}\geq r \Big\} \leq e^{-\varepsilon r},
\end{equation}
where $a$ is as in the statement of Proposition \ref{prop:Ito-Lyapunov:diffusive}.
\end{corollary}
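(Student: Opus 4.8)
The plan is to run It\^o's formula on the Lyapunov function $\Theta(X)=\Theta(X;s,N)$ from~\eqref{eqn:Lyapunov:diffusive} and feed the output into the exponential martingale inequality, the decisive point being that in regime (D) the quadratic variation of the noise is dominated by the \emph{dissipative} part of the drift of $\Theta(X(t))$. A short sharpening of the proof of Proposition~\ref{prop:Ito-Lyapunov:diffusive} (retaining the negative terms rather than discarding them; the absorption of the $v$--$z_k$ cross terms into a fraction of the dissipation uses precisely the convergence of $\sum_{k\ge1}k^{-1-(\alpha-1)\beta+2s}$ furnished by (D) together with the choice of large $N$) should give
\begin{equation*}
\L\Theta(X)\ \le\ -D(X)+a,\qquad D(X):=\frac{\gamma}{2m}v^2+\frac{1}{m}\sum_{k\le N}\lambda_k z_k^2+\frac{1}{2}\sum_{k>N}\lambda_k k^{-2s}z_k^2\ \ge\ 0 .
\end{equation*}
By It\^o's formula (after the standard localization, as in the proof of Proposition~\ref{prop:Ito-Lyapunov}),
\begin{equation*}
\Theta(X(t))=\Theta(X_0)+\int_0^t\L\Theta(X(s))\,ds+M(t)\ \le\ \Theta(X_0)+at-\int_0^t D(X(s))\,ds+M(t),
\end{equation*}
where $M$ is the continuous local martingale coming from the stochastic integrals, with
\begin{equation*}
d\langle M\rangle_t=\Big(\frac{2\gamma}{m^2}v(t)^2+\sum_{k\le N}\frac{2\lambda_k}{m^2}z_k(t)^2+\sum_{k>N}2\lambda_k k^{-4s}z_k(t)^2\Big)\,dt .
\end{equation*}

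The key observation is that, comparing coefficients term by term and using $\lambda_1=1$ and $\lambda_k k^{-2s}\le 1$ for every $k\ge1$, one has $d\langle M\rangle_t\le C_1\,D(X(t))\,dt$ with $C_1=C_1(m)=\max\{4/m,\,4\}$ \emph{independent} of $\eta$, $X_0$, $\alpha$ and $\beta$. Hence $\tfrac1{C_1}\langle M\rangle_t\le\int_0^t D(X(s))\,ds$, which combined with the displayed inequality for $\Theta(X(t))$ yields, for all $t\ge0$ almost surely,
\begin{equation*}
M(t)-\frac1{C_1}\langle M\rangle_t\ \ge\ M(t)-\int_0^t D(X(s))\,ds\ \ge\ \Theta(X(t))-\Theta(X_0)-at .
\end{equation*}
Since $\exp\!\big(\tfrac2{C_1}M(t)-\tfrac2{C_1^2}\langle M\rangle_t\big)=\exp\!\big(\tfrac2{C_1}(M(t)-\tfrac1{C_1}\langle M\rangle_t)\big)$ is a nonnegative local martingale, hence a supermartingale on $[0,\infty)$ started at $1$, the exponential martingale inequality (the maximal inequality applied to this supermartingale) gives
\begin{equation*}
\P{\sup_{t\ge0}\Big(M(t)-\frac1{C_1}\langle M\rangle_t\Big)\ge\beta}\ \le\ e^{-2\beta/C_1}\qquad\text{for all }\beta>0 .
\end{equation*}

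To conclude, recall $\Phi(x(t))/m\le\Theta(X(t))$, and note that $e^{\eta t}\ge1+\eta t$ forces $e^{\eta t}(\Theta(X_0)+a/\eta+r)\ge\Theta(X_0)+at+(a/\eta+r)$. Thus, on the event in the statement there is some $t\ge0$ with
\begin{equation*}
\Theta(X(t))\ \ge\ \frac{\Phi(x(t))}{m}\ =\ e^{\eta t}\cdot\frac{e^{-\eta t}\Phi(x(t))}{m}\ \ge\ e^{\eta t}\big(\Theta(X_0)+a/\eta+r\big)\ \ge\ \Theta(X_0)+at+\big(a/\eta+r\big),
\end{equation*}
so $\Theta(X(t))-\Theta(X_0)-at\ge a/\eta+r$; that event is therefore contained in $\{\sup_{t\ge0}(M(t)-\tfrac1{C_1}\langle M\rangle_t)\ge a/\eta+r\}$, and the previous bound with $\beta=a/\eta+r$, together with $e^{-2a/(C_1\eta)}\le1$, gives exactly the claimed estimate with $\varepsilon:=2/C_1$, which depends only on $m$ (in particular only on $m,\gamma,\alpha,\beta$). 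The passage from ``$\sup_{t\ge0}(\cdot)\ge r$'' to ``there exists $t$ with $(\cdot)\ge r$'' is handled by continuity of $t\mapsto X(t)$ and a routine limiting argument.

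I expect the main obstacle to be isolating the pairing $d\langle M\rangle_t\le C_1 D(X(t))\,dt$ between the noise's quadratic variation and the dissipation of $\Theta$, and correspondingly upgrading Proposition~\ref{prop:Ito-Lyapunov:diffusive} to the dissipative form $\L\Theta\le -D(X)+a$ for \emph{this particular} $D$ — this is where condition (D) and the large parameter $N$ genuinely enter, and it is what makes $\varepsilon$ uniform in $\eta$ and in the initial datum $X_0$. If one works only from the bare bound $\L\Theta\le a$, one is instead forced to control $d\langle M\rangle_t$ by a constant multiple of $\Theta(X(t))\,dt$; a stopping-time freezing argument still produces a tail bound, but with an $\varepsilon$ that deteriorates as $\eta\downarrow0$.
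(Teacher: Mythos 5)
Your proof is correct and takes essentially the same route as the paper: it rests on the dissipative form \eqref{ineq:Ito-Lyapunov:diffusive:4} of Proposition~\ref{prop:Ito-Lyapunov:diffusive} (which the paper's own proof of the corollary indeed reuses with the negative terms intact, so no further ``sharpening'' is needed), on the observation that $d\la M\ra_t$ is controlled by a constant multiple of that dissipation, and on the exponential (super)martingale maximal inequality. The only difference is bookkeeping: the paper applies It\^o's formula to the discounted quantity $e^{-\eta t}\Theta(X(t))$, so that $a/\eta=\int_0^\infty ae^{-\eta r}\,dr$ appears directly and the $\eta$-dependence sits inside the martingale $M_\eta$, whereas you work with $\Theta(X(t))$ itself and trade the resulting linear drift $at$ against the discount only at the end via $e^{\eta t}\geq 1+\eta t$ (using $\Theta(X_0)\geq 0$); both yield an $\varepsilon$ uniform in $\eta$, $r$ and $X_0$.
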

\noindent The proof of Corollary~\ref{cor:exp-g} will also be given at the end of this section.  

We now conclude Proposition~\ref{prop:asymp-coupling} assuming the previous two results.  
\begin{proof}[Proof of Proposition \ref{prop:asymp-coupling}]  We begin by showing part (a) of the result.  In view of formulas \eqref{eqn:Wiener} and \eqref{eqn:B}, the norm of the control $U(t)$ in $\W$ satisfies
\begin{equation*}
\|U(t)\|_\W^2=u_0(t)^2 .  
\end{equation*}
It thus follows by definition of $\tau$ that 
\begin{equation*}
\int_0^\infty \|U(X(t),\Xtil(t))\textbf{1}\{t\leq\tau\}\|_\W^2 \, dt = \int_0^\infty |u_0(t)|^2 \textbf{1}\{t\leq\tau\}\, dt=\kappa\qquad \mathbb{P}-\text{almost surely}.
\end{equation*}
Applying Theorem 10.4 from \cite{da2014stochastic} finishes the proof of part~(a).  

To conclude part~(b), for $t\leq \tau $ one can readily verify that the exact solution of \eqref{eqn:infinite-coupling} is given by 
\begin{equation*}
\begin{aligned}
\xbar(t)&=\Big(2\xbar_0+\frac{\vbar_0}{\lambda}\Big)e^{-\lambda t}-\Big(\xbar_0+\frac{\vbar_0}{\lambda}\Big)e^{-2\lambda t}\\
\vbar(t)&=-\left(2\lambda\xbar_0+\vbar_0\right)e^{-\lambda t}+2\left(\lambda\xbar_0+\vbar_0\right)e^{-2\lambda t}\\
\zbar_k(t)&=e^{-\lambda_k t}\Big[\left(\zbar_k\right)_0 +\sqrt{c_k}\int_0^t e^{\lambda_k r} \bar{v}(r) dr\Big].
\end{aligned}
\end{equation*}
From this, it follows that
\begin{equation}
\label{ineq:coupling-4}
\left|\xbar(t)\right|\leq C_1 e^{-\lambda t},\qquad \left|\vbar(t)\right|\leq C_2 e^{-\lambda t},
\end{equation}
where 
\begin{equation*} 
C_1:=3|\xbar_0|+\frac{3|\vbar_0|}{\lambda},\qquad C_2:=4\lambda|\xbar_0|+4|\vbar_0|.
\end{equation*}
Combining these bounds, we thus obtain the following bound on $\zbar_k(t)$
\begin{equation*}
\left|\zbar_k(t)\right|\leq e^{-\lambda_k t}\Big[\left|\left(\zbar_k\right)_0\right| +C_2\sqrt{c_k}\int_0^t e^{(\lambda_k-\lambda) r} dr\Big].
\end{equation*}
Choosing $\lambda=\lambda_1+1$ note that $\lambda-\lambda_k\geq 1$ for all $k\geq 1$ since $\lambda_k\downarrow 0$. With this choice of $\lambda$, it follows from the inequality above that for all $k\geq 1$,
\begin{equation}
\label{ineq:coupling-7}
\left|\zbar_k(t)\right|\leq e^{-\lambda_k t}\left(\left|\left(\zbar_k\right)_0\right| +C_2\sqrt{c_k}\right),
\end{equation}
and hence by Young's inequality,
\begin{equation*}
\zbar_k(t)^2\leq  2e^{-2\lambda_k t}\big[\left(\zbar_k\right)_0^2 +C_2^2c_k\big] .
\end{equation*}
Thus putting it all together we find that
\begin{align*}
\|\Xbar(t)\|_{-s}^2&=\xbar(t)^2+\vbar(t)^2+\sum_{k\geq 1}k^{-2s}\zbar_k(t)^2\\
&\leq \left(C_1^2+C_2^2\right)e^{-\lambda t}+2\sum_{k\geq 1}k^{-2s}\left(\zbar_k\right)_0^2e^{-2\lambda_k t}+2C_2^2\sum_{k\geq 1}k^{-2s}e^{-2\lambda_k t}c_k.
\end{align*}
Thus on the event $\{\tau=\infty\}$, it is now evident that $\|\Xbar\|_{-s}^2\to 0$ as $t\to\infty$ by applying the Monotone Convergence Theorem.

Turning to part (c) of the result, for any $R>0$ consider the event $E_R$ given by 
\begin{equation} \label{eqn:coupling-9}
E_{R}=\Big\{\sup_{t\geq 0}\frac{e^{-\lambda t/2q}\Phi(x(t))}{m}-\Theta(X_0)-\frac{2qa}{\lambda}< R\Big\},
\end{equation}
where $q$ is the constant from Asumption \ref{cond:Phi-1}. In view of Corollary~\ref{cor:exp-g} with $\eta=\lambda/2q$, $E_{R}$ has positive probability provided $R=R(\gamma,m,\alpha,\beta)>0$ is  sufficiently large. We first claim that on $E_R$, 
\begin{align*}
\int_0^\infty \|U(t)\|_{\W}^2 \, dt \,\,  \text{ is bounded almost surely}.
\end{align*}
To see this, recall by definition of the control $U$ that 
\begin{align*}
\int_0^\infty \|U(t)\|_{\W}^2dt=\int_0^\infty |u_0(t)|^2dt. 
\end{align*}
Thus estimating $u_0(t)^2$, from~\eqref{eqn:coupling} we have 
\begin{equation*}
\begin{aligned}
u_0(t)^2&\leq \frac{2 m^2}{\gamma} \Big[\left(3\lambda-\frac{\gamma}{m}\right)^2\vbar(t)^2+4\lambda^4\xbar(t)^2\\
&\qquad\qquad\qquad\qquad\,\,+\frac{1}{m^2}\Big(\sum_{k\geq 1}\sqrt{c_k}|\zbar_k(t)|\Big)^2+\frac{1}{m^2}\left|\Phi'(x(t))-\Phi'(\widetilde{x}(t))\right|^2\Big]\\
&= I_1(t)+I_2(t)+I_3(t)+I_4(t).
\end{aligned}
\end{equation*}
For $I_1(t)+I_2(t)$, apply~\eqref{ineq:coupling-4} to find
\begin{align}\label{ineq:coupling-11}
I_1(t)+I_2(t) &\leq \frac{2m^2}{\gamma}  \Big[\left(3\lambda-\frac{\gamma}{m}\right)^2C_2^2+4\lambda^4C_1^2\Big]e^{-2\lambda t}=C_3 e^{-2\lambda t},
\end{align}
where $C_3:= \frac{2m^2}{\gamma}  \left[\left(3\lambda-\frac{\gamma}{m}\right)^2C_2^2+4\lambda^4C_1^2\right]$. For $I_3(t)$, employ~\eqref{ineq:coupling-7} to see that  
\begin{align*}
I_3(t)&\leq \frac{2}{\gamma} \Big(\sum_{k\geq 1}\sqrt{c_k}e^{-\lambda_k t}\big(|(\zbar_k)_0| +C_2\sqrt{c_k}\big)\Big)^2\\&\leq \frac{4}{\gamma}\Big(\sum_{k\geq 1}\sqrt{c_k}e^{-\lambda_k t}|(\zbar_k)_0|\Big)^2+\frac{4C_2^2}{\gamma}\Big( \sum_{k\geq 1}c_ke^{-\lambda_k t}\Big)^2\\
&\leq \frac{4\Xbar_0^2}{\gamma}\sum_{k\geq 1}\frac{c_k e^{-2\lambda_k t}}{k^{-2s}}+\frac{4C_2^2}{\gamma}K(t)^2,
\end{align*}
where the last inequality follows by Cauchy-Schwarz inequality since $K(t)=\sum_{k\geq 1}c_ke^{-\lambda_k t}$ by definition. Lastly, to estimate $I_4(t)$, Assumption \ref{cond:Phi-1} and~\eqref{ineq:coupling-4} together imply that
\begin{align*}
I_4(t) &\leq \frac{4}{\gamma}\xbar(t)^2\big(f\left(\xbar(t)\right)^2+\Phi(x(t))^{2q}\big)\leq \frac{4C_1^2e^{-2\lambda t}}{\gamma}\Big(\sup_{|y|\leq C_1}f\left(y\right)^2+\Phi(x(t))^{2q}\Big).
\end{align*}
Now on the event $E_{R}$, we note that
\begin{equation*}
\sup_{t\geq 0}e^{-\lambda t}\Phi(x(t))^{2q} <\Big(m \Theta(X_0)+\frac{2qma}{\lambda}+m R\Big)^{2q}=:C_4.
\end{equation*}
Hence
\begin{equation*}
\begin{aligned}
I_4(t) &\leq \frac{4C_1^2e^{-\lambda t}}{\gamma}\Big(e^{-\lambda t}\sup_{|y|\leq C_1}f\left(y\right)^2+\sup_{t\geq 0}e^{-\lambda t}\Phi(x(t))^{2q}\Big)\\
&\leq\frac{4C_1^2}{\gamma}\Big(\sup_{|y|\leq C_1}f\left(y\right)^2+C_4\Big)e^{-\lambda t}\\
&=C_5 e^{-\lambda t}.
\end{aligned}
\end{equation*}
Combining these bounds for $I_1, I_2, I_3, I_4$ shows that on $E_R$,
\begin{equation} \label{ineq:coupling-18}
\begin{aligned}
\int_0^\infty u_0(t)^2 dt &\leq \int_0^\infty I_1(t)+I_2(t) +I_3(t) +I_4(t) \, dt \\
&\leq \int_0^\infty C_3 e^{-2\lambda t}+\frac{4\Xbar_0^2}{\gamma}\sum_{k\geq 1}\frac{c_k e^{-2\lambda_k t}}{k^{-2s}}+\frac{4C_2^2}{\gamma}K(t)^2+C_5e^{-\lambda t} \, dt\\
&= \frac{4\Xbar_0^2}{\gamma}\sum_{k\geq 1}\frac{c_k}{2\lambda_k k^{-2s}}+\int_0^\infty C_3 e^{-2\lambda t}+\frac{4C_2^2}{\gamma}K(t)^2+C_5e^{-\lambda t} dt.
\end{aligned}
\end{equation}
We invoke Assumption \ref{cond:sol'n} again to see that $\sum_{k\geq 1}\frac{c_k}{2\lambda_k k^{-2s}}<\infty$. Furthermore, in view of~\eqref{lim:K}, $K(t)^2\sim t^{-2\alpha}$ as $t\to\infty$, implying $K(t)^2$ is integrable. We thus infer from \eqref{ineq:coupling-18} a constant $C_6=C_6(X_0,\Xtil_0,\gamma,m,\alpha,\beta)>0$ such that on $E_R$, 
\begin{equation} \label{ineq:coupling-19}
\int_0^\infty u_0(t)^2 dt \leq C_6.
\end{equation}
Finally, we choose $\kappa>C_6$ in the definition of $\tau=\tau(\kappa)$ forcing $E_R\subset\{\tau=\infty\} $. We therefore, conclude that $\P{\tau_\kappa=\infty}>0$. The proof is thus complete. 
\end{proof}

We now finish this section by giving the proofs of Proposition~\ref{prop:Ito-Lyapunov:diffusive} and Corollary~\ref{cor:exp-g}.  
\begin{proof}[Proof of Proposition~\ref{prop:Ito-Lyapunov:diffusive}]
We have 
\begin{align*}
\L \Theta &= -\frac{\gamma}{m}v^2-\frac{1}{m}\sum_{k=1}^N\lambda_k z_k^2-\sum_{k>N}\lambda_k k^{-2s}z_k^2-\frac{1}{m}\sum_{k>N}\sqrt{c_k}z_k v\\
&\qquad+\sum_{k>N}\sqrt{c_k}k^{-2s}z_k v+ \frac{\gamma}{m^2}+\frac{1}{m}\sum_{k=1}^N\lambda_k+\sum_{k>N}\lambda_k k^{-2s} .  
\end{align*}
Young's inequality combined with Cauchy-Schwarz inequality then gives 
\begin{align*}
\frac{1}{m}\sum_{k>N}\sqrt{c_k}z_k v\leq \frac{\gamma}{4m}v^2+\frac{1}{\gamma m}\sum_{k>N}\frac{c_k}{k^{-2s}\lambda_k} \sum_{k>N}k^{-2s}\lambda_kz_k^2
\end{align*} 
and
\begin{align*}
\sum_{k>N}k^{-2s}\sqrt{c_k}z_k v\leq \frac{\gamma}{4m}v^2+\frac{m}{\gamma }\sum_{k>N}\frac{k^{-2s}c_k}{\lambda_k}\sum_{k>N}k^{-2s}\lambda_k z_k^2.
\end{align*} 
Combining the previous two inequalities with the first we obtain
\begin{equation}\label{ineq:Ito-Lyapunov:diffusive:4}
\begin{aligned}
\L \Theta &\leq -\frac{\gamma}{2m}v^2-\frac{1}{m}\sum_{k=1}^N\lambda_k z_k^2-a_1\sum_{k>N}k^{-2s}\lambda_kz_k^2 +a,
\end{aligned}
\end{equation}
where 
\begin{equation}\label{eqn:Ito-Lyapunov:diffusive:5}
a:=\frac{\gamma}{m^2}+\frac{1}{m}\sum_{k=1}^N\lambda_k+\sum_{k>N}\lambda_k k^{-2s} ,\quad a_1:=1-\frac{1}{\gamma m} \sum_{k>N}\frac{c_k}{k^{-2s}\lambda_k}-\frac{m}{\gamma }\sum_{k>N}\frac{k^{-2s}c_k}{\lambda_k}.
\end{equation}
We invoke Condition (D) of Assumption \ref{cond:sol'n} again to see that
\begin{equation*}
\begin{aligned}[l]
\sum_{k\geq 1}\lambda_k k^{-2s}=\sum_{k\geq 1}\frac{1}{k^{\beta+2s}}<\infty,&\qquad
\sum_{k\geq 1}\frac{c_k}{k^{-2s}\lambda_k}=\sum_{k\geq 1}\frac{1}{k^{1+(\alpha-1)\beta-2s}}<\infty,\\
\text{and}\qquad\sum_{k\geq 1}\frac{k^{-2s}c_k}{\lambda_k}&=\sum_{k\geq 1}\frac{1}{k^{1+(\alpha-1)\beta+2s}}<\infty,
\end{aligned}
\end{equation*}
which implies that $a<\infty$ and that $N$ can be chosen large enough such that $0<a_1<\infty$. We therefore conclude $\L \Theta \leq a$, which is the desired inequality.
\end{proof}

\begin{proof}[Proof of Corollary~\ref{cor:exp-g}]. Fix $\eta>0$ and apply Ito's Formula to $e^{-\eta t}\Theta(X(t))$ to find
\begin{equation}\label{eqn:exp-g-2}
d (e^{-\eta t}\Theta(X(t)))=-\eta e^{-\eta t}\Theta(X(t)) \, dt + e^{-\eta t}\mathcal{L}\Theta(X(t))\, dt + d M_\eta (t)
\end{equation}
where the martingale $M_\eta$ satisfies 
\begin{align*} \label{eqn:exp-g-3}
d M_\eta(t)& =e^{-\eta t}\frac{\sqrt{2\gamma}}{m}v(t)\, dW_0(t)+\frac{e^{-\eta t}}{m}\sum_{k=1}^N \sqrt{2\lambda_k}z_k(t)\, dW_k(t)\\
&\qquad +e^{-\eta t}\sum_{k>N}\sqrt{2\lambda_k}k^{-2s}z_k(t)\, dW_k(t).
\end{align*}
Note also that the quadratic variation process $\la M_\eta \ra$ has 
\begin{equation}\label{eqn:exp-g-4}
d \la M_\eta\ra (t) =\frac{2\gamma e^{-2\eta t} }{m^2}v(t)^2 \, dt +\frac{2 e^{-2\eta t} }{m^2}\sum_{k=1}^N \lambda_k z_k(t)^2 \, dt +2 e^{-2\eta t} \sum_{k>N}\lambda_k k^{-4s}z_k(t)^2\, dt.
\end{equation}
We recall from \eqref{ineq:Ito-Lyapunov:diffusive:4} that 
\begin{equation} \label{ineq:exp-g-5}
\begin{aligned}
\L \Theta(X(t))&\leq -\frac{\gamma}{2m}v(t)^2-\frac{1}{m}\sum_{k=1}^N\lambda_k z_k(t)^2-a_1 \sum_{k>N}k^{-2s}\lambda_kz_k(t)^2+a,
\end{aligned}
\end{equation}
where $a,\,a_1$ are defined in \eqref{eqn:Ito-Lyapunov:diffusive:5}. Combining \eqref{eqn:exp-g-2}, \eqref{eqn:exp-g-4} and \eqref{ineq:exp-g-5}, for every $\varepsilon>0$ we obtain the estimate
\begin{equation*}\label{ineq:exp-g-7}
\begin{aligned}
d\left(e^{-\eta t}\Theta(X(t))\right)  &\leq  ae^{-\eta t}dt + d M_\eta (t)-\frac{\varepsilon}{2}d \la M_\eta\ra(t)\\ 
&\qquad -e^{-2\eta t}\Big[\frac{\gamma}{2m}v(t)^2+\frac{1}{m}\sum_{k=1}^N\lambda_k z_k(t)^2+a_1\sum_{k>N}k^{-2s}\lambda_kz_k(t)^2\\
&\qquad-\frac{\varepsilon}{2}\Big(\frac{\gamma}{m^2}v(t)^2+\frac{1}{m^2}\sum_{k=1}^N \lambda_k z_k(t)^2 +\sum_{k>N}\lambda_k k^{-4s}z_k(t)^2 \Big)\Big]dt.
\end{aligned}
\end{equation*}
By choosing $\varepsilon=\varepsilon(\gamma,m,\alpha,\beta)>0$ smaller if necessary, the bracket term on the above RHS is nonpositive. Hence
\begin{equation*}\label{ineq:exp-g-8}
d(e^{-\eta t}\Theta(X(t)))\leq  ae^{-\eta t}dt + d M_\eta (t)-\frac{\varepsilon}{2}d\la M_\eta\ra(t).
\end{equation*}
Integrating with respect to $t$ we find
\begin{align*}\label{ineq:exp-g-9}
e^{-\eta t}\Theta(X(t))-\Theta(X_0) &\leq  \int_0^\infty ae^{-\eta r}dr + M_\eta (t)-\frac{\varepsilon}{2}\la M_\eta\ra(t)=\frac{a}{\eta}+M_\eta (t)-\frac{\varepsilon}{2}\la M_\eta\ra(t).
\end{align*}
Since $\Theta(X(t))\geq \Phi(X(t))/m$ by the definition of $\Theta(X)$, we infer that
\begin{equation*}\label{ineq:exp-g-10}
\frac{e^{-\eta t}\Phi(x(t))}{m}-\Theta(X_0)-\frac{a}{\eta}\leq M_\eta (t)-\frac{\varepsilon}{2}\la M_\eta\ra(t).
\end{equation*}
Invoking the exponential martingale inequality we obtain 
\begin{align*}\label{ineq:exp-g-11}
\Pnone \Big\{\sup_{t\geq 0}\frac{e^{-\eta t}\Phi(x(t))}{m}-\Theta(X_0)-\frac{a}{\eta}\geq r\Big\}&\leq\Pnone \Big\{ \sup_{t\geq 0}\left[M_\eta (t)-\frac{\varepsilon}{2}\la M_\eta\ra(t)\right]\geq r\Big\}\\
&\leq e^{-\varepsilon r},
\end{align*}
thus completing the proof.  
\end{proof}

\section{discussion} \label{sec:discussion} We have rigorously studied the GLE in a potential well $\Phi$ with a power-law decay memory $K(t)$, i.e. $K(t)\sim t^{-\alpha}$, $\alpha>0$ as $t\to\infty$. Using a Mori-Zwanzig approach, when the memory $K$ can be written as an infinite sum of exponentials, we represent the non-linear GLE as an infinite-dimensional Markovian system. With nominal conditions on the potential $\Phi$, we show that for every $\alpha>0$, this Markovian system is well-posed in suitable spaces and admits an invariant measure. Moreover, using an asymptotic coupling argument, the system is shown to be uniquely ergodic when $\alpha>1$. The problem of unique ergodicity remains open when $\alpha\in(0,1]$. 

A related research topic that is of direct interest is to establish the convergence rate to stationarity. Due to the memory's power-law decay, one might conjecture that this system does not approach the invariant measure with an exponential rate (commonly called geometric ergodicity). That is, it is conceivable that there is a unique invariant measure, but the approach is algebraic instead. Current methods for proving an algebraic rate of convergence to stationarity rely on finding a type of Lyapunov function that is currently unknown for this system.

Lastly, we would like to touch on the term ``ergodicity breaking,'' which has appeared in the physics literature in connection with models of anomalous subdiffusion \cite{he2008random,lubelski2008nonergodicity}. In particular, there are claims that solutions to the generalized Langevin equation in a quadratic potential  can break ergodicity in the sense that a popular expression for a particle time-average does not match a stationary population's ensemble average \cite{jeon2010fractional, jeon2012inequivalence}. It is important to point out though that the time average used in these papers, sometimes called the pathwise mean-squared displacement,
\begin{equation} \label{eqn:msd}
\frac{1}{T-\Delta}\int_0^{T-\Delta} (x(s+ \Delta)- x(s))^2 \, ds
\end{equation}
is a ``sliding window'' average of squared-displacements and not equivalent to the time-average formula \eqref{eqn:ergodic-theorem} used in the mathematical literature on ergodic theory. Notably, the authors of references \cite{jeon2010fractional} and \cite{jeon2012inequivalence} are able to show that the inequivalence of \eqref{eqn:msd} with ensemble averages occurs even for the Ornstein-Uhlenbeck SDE (Brownian motion in a quadratic potential), which is well-known to be a (geometrically) ergodic process in the sense of Birkoff. Therefore, the results and conjectures discussed under the heading of ``ergodicity breaking'' in the physics literature do not necessarily align with the results and conjectures we present here. Having said that, the MSD formula \eqref{eqn:msd} is an essential tool in the particle tracking literature and we believe it is an interesting and unsolved question as to why this time-average fails to match ensemble averages as one might expect for an ergodic process.

\section*{Acknowledgement}The authors would like to thank Jonathan Mattingly and Pete Kramer for fruitful discussions on the topic of this paper.  All authors are grateful for support from the mathematics departments at Tulane University and Iowa State University as well as for support through grants NSF DMS-1644290 (SAM and HDN), NSF DMS-1612898 and DMS-1855504 (DPH), and NSF DMS-1816551, DMS-1522616, and Simons Foundation grant 515990 (NEGH).



\bibliographystyle{plain}
\bibliography{nonlinear-gle}

\end{document}